\colorlet{symbols}{black} 
\tikzset{
	dot/.style={circle,fill=symbols,draw=symbols,inner sep=0pt,minimum size=0.4pt},
	basic/.style={draw=symbols},
	>=stealth,
	}
\renewcommand{\d}{\partial} 
\newcommand{\ov}{\overline}
\renewcommand{\Re}{{\rm Re}\,}
\renewcommand{\Im}{{\rm Im}\,}
\newcommand{\loc}{{\rm loc}\,}
\renewcommand{\sup}{{\rm sup}\,}
\def\Tc{T_{\hbox{\tiny c}}}
\def\cH{\mathcal{H}}
\def\C{\mathop{\mathbb C\kern 0pt}\nolimits}
\def\N{\mathop{\mathbb N\kern 0pt}\nolimits}
\def\R{\mathop{\mathbb R\kern 0pt}\nolimits}
\def\Z{\mathop{\mathbb Z\kern 0pt}\nolimits}
\def\bq{\mathop{\mathbf q\kern 0pt}\nolimits}
\def\dt{\,{\rm d}t\,}
\def\dy{\,{\rm d}y\,}
\def\dx{\,{\rm d}x\,}   
\def\dm{\,{\rm d}m\,}  
\def\dxi{\,{\rm d}\xi\,} 
\def\dt'{\,{\rm d}t'\,}
\def\ds'{\,{\rm d}s'\,}
\def\sech{\,{\rm sech}\,}
\def\tanh{\,{\rm tanh}\,}
\def\cC{\mathcal{C}}
\def\cE{\mathcal{E}}
\def\cI{\mathcal{I}}
\def\cM{\mathcal{M}}
\def\cU{\mathcal{U}}
\def\cP{\mathcal{P}}
\def\cR{\mathcal{R}}
\def\cS{\mathcal{S}}
\def\fb{\mathfrak{b}}
\newtheorem{thm}{Theorem}[section]
\newtheorem{lem}[thm]{Lemma}
\newtheorem{rmk}[thm] {Remark}
\newtheorem{cor}[thm]{Corollary}
\newtheorem{prop}[thm]{Proposition}
\newtheorem{defi}[thm]{Definition}
\title[Conserved energies for the GP equation: low regularity case]
{Conserved energies for the one dimensional Gross-Pitaevskii equation: low regularity case} 
 \author[H. Koch]{Herbert Koch}
\address [H. Koch]%
 {Mathematisches Institut, Universit\"at Bonn, Endenicher Allee 60, 53115 Bonn, Germany}
\email{koch@math.uni-bonn.de}
 \author[X. Liao]{Xian Liao}
\address [X. Liao]%
 {Institute for Analysis, Karlsruhe Institute for Technology, Englerstrasse 2, 76131 Karlsruhe, Germany}
\email{xian.liao@kit.edu} 
\date{}
\begin{document} 

\begin{abstract} 

We  construct a family of conserved energies for the one dimensional Gross-Pitaevskii equation, but in the low regularity case (in  \cite{KL} we have constructed conserved energies in the high regularity situation).
This can be done thanks to   regularization procedures and a study of 
the topological structure of the finite-energy space. 
The asymptotic  (regularised conserved) phase change on the real line with values in $ \R/2\pi \Z$  is studied. 
We also construct a conserved quantity, the renormalized momentum $H_1$ (see Theorem \ref{thm:E1}), on the universal covering space of the finite-energy space. 
 
\end{abstract}
\maketitle
\noindent{\sl Keywords:} Gross-Pitaevskii equation,   transmission coefficient,  conserved energies, regularisation,  asymptotic phase change, renormalized momentum 

\vspace{.1cm}

\noindent{\sl AMS Subject Classification (2010):} 35Q55, 37K10\


\section{Introduction}
We consider the one dimensional Gross-Pitaevskii equation
\begin{equation}\label{GP}
i\d_t q+\d_{xx}q=2q(|q|^2-1).
\end{equation}
Here $(t,x)\in\R^2$ denote the one dimensional time and space variables respectively and   $q=q(t,x)$ denotes the unknown complex-valued wave function.

The Gross-Pitaevskii equation \eqref{GP} can be viewed as the defocusing cubic nonlinear Schr\"odinger equation (NLS), but assuming a nonzero boundary condition at infinity
$$
|q(t,x)|\rightarrow 1,\quad\hbox{ as }|x|\rightarrow\infty.
$$ 
It is relevant  in the physical contexts of Bose-Einstein condensation, nonlinear optics (e.g.  optical vortices) and fluid mechanics (e.g. superfluidity of Helium II).
Thanks to this nonzero boundary condition, the Gross-Pitaevskii equation \eqref{GP} possesses the following interesting black (with $c=0$) and dark (with $c\neq 0$)  soliton solutions in nonlinear optics
\begin{equation} \label{eq:soliton}
q_c(t,x)=\sqrt{ 1-c^2}\tanh\Bigl( \sqrt{1-c^2} (x-2ct)\Bigr)+ic,\quad -1<c<1.
\end{equation} 
They are called black resp. dark since their density drops from the background density $1$:
$$|q_c(t,x)|^2=1-(1-c^2)\sech^2\bigl(\sqrt{1-c^2}(x-2ct)\bigr)<1,\quad -1<c<1.$$
There are no soliton solutions of \eqref{GP} with travelling speed $|2c|\geq 2$.
There were many works, e.g.  \cite{ BGS15,BGSS08, CJ, GZ, Lin}, contributing to the stability of these soliton solutions.

The Gross-Pitaevskii equation \eqref{GP} is  the Hamiltonian flow with respect to the Hamiltonian function
\begin{equation}\label{cEGL}
\cE(q)=\int_{\R} \bigl((|q|^2-1)^2+ |\d_x q|^2 \bigr)\dx,
\end{equation}  
and the Poisson structure (see \cite{FT}) 
\[ \omega(F,G) =i \int_{\R} \Bigl( \frac{\delta F }{\delta q} \frac{\delta G}{\delta \bar q }-\frac{\delta F}{\delta\bar q}\frac{\delta G}{\delta q}\Bigr)\dx . \] 
Aside from the  energy $\cE$, the mass 
\begin{align}\label{cM}
 \cM=\int_{\R} (|q|^2-1)\dx,   
\end{align} 
and the momentum 
\begin{align}\label{cP}
\cP =\Im\int_{\R} q\d_x \bar q\dx
\end{align} 
are  conserved by the Gross-Pitaevskii flow. 
One computes straightforward for the soliton solution family \eqref{eq:soliton} that 
\begin{align*}
&\mathcal{M}(q_c) = -2(1-c^2),  
\\
& \mathcal{P}(q_c) =2c \sqrt{1-c^2},
\\
&\mathcal{E}(q_c)
=\frac83\sqrt{1-c^2}^3. \end{align*} 

\smallbreak 

The Gross-Pitaevskii equation \eqref{GP} was shown by P. Zhidkov \cite{Zhidkov}  to be locally-in-time well-posed  in the so-called Zhidkov's space 
\begin{equation*}\begin{split}
& Z^k=\{q\in  L^\infty(\R)\,|\, \d_x q\in H^{k-1}(\R)\},
\\
&\hbox{ associated with the norm }\|q\|_{Z^k}=\|q\|_{L^\infty}+\sum_{1\leq l\leq k} \|\d_x^l q\|_{L^2}, 
\end{split}\end{equation*}
for $k\geq 1$, and globally-in-time well-posed in $Z^1$ by use of the  conserved energy $\cE(q)$ in \eqref{cEGL}.  However even the stability with respect to this metric of the trivial solution $q=1$ is expected to be false: We expect that for all $\varepsilon>0$ and $ k \ge 0$ there exists initial data $q_0$ so that $ \Vert q_0-1 \Vert_{Z^k} < \varepsilon$
but the solution $q(t)$ satisfies $ \sup_t \Vert q(t)-1 \Vert_{L^\infty} \ge 2$, with $2$ being the diameter of the unit circle. 

It is also interesting to mention the global-in-time well-posedness results   by P. G\'erard \cite{Gerard06, Gerard08}   in the energy space $Y^1=\{ q\in H^1_{\loc}(\R^n): |q|^2-1\in L^2(\R^n), \, \nabla q\in L^2(\R^n) \}$,
 endowed with the metric distance
 \begin{align*}
&d_{Y^1}(p,q)=\|p-q\|_{Z^1+H^1}+\bigl\||p|^2-|q|^2\bigr\|_{L^2},
\\
&\hbox{ with }\|u\|_{A+B}=\inf\{\|u_1\|_{A}+\|u_2\|_{B}\,|\, u=u_1+u_2,
\, u_1\in A,\, u_2\in B\}.
\end{align*}  
Notice that  there is no simple  relation between the  metric space $(X^1, d^1)$ (see Theorem \ref{thm:Xs} below) and the Zhidkov's space $(Z^1, \|\cdot\|_{Z^1})$ or the G\'erard's space $(Y^1, d_{Y^1})$. In higher dimensions there are  scattering results     in \cite{GNT, GNT09}, which are not expected to hold in one space dimension by virtue of these soliton solutions \eqref{eq:soliton}.


We denote  the finite-energy   space of the Gross-Pitaevskii equation by
$$
q(t,\cdot)\in X^1:= \bigl\{ p\in H^1_{\loc}(\R)\,:\, |p|^2-1,\d_xp\in L^2(\R) 
\bigr\}/ {\mathbb{S}^1},
$$
where $\mathbb{S}^1$ denotes the unit circle (that is,  the functions which differ by a multiplicative constant  of modulus $1$ are identified in $X^1$). 
Let the operator $D=\langle \partial_x\rangle$ be defined by  
     \begin{equation}\label{D}\widehat{Df}(\xi)=\langle\xi\rangle\hat{f}(\xi),\end{equation}
 where $\langle\xi\rangle=\sqrt{2^2+\xi^2}$, and $\hat{f}(\xi)=\frac{1}{\sqrt{2\pi}}\int_{\R}e^{-i\xi x}f(x)\dx$ denotes the Fourier transform of the function $f(x)$. 
 Here the factor $2$ appears due to the mismatch between the usual Fourier transform and the (inverse) scattering transform terminology.  We define 
 \begin{equation} \label{eq:energy}  \Vert q \Vert_{H^s} = \Vert D^s q \Vert_{L^2}  \end{equation} 
 and introduce the   energy ``norm''  in $X^s$
\begin{equation}\label{energy,D}
E^s (q)=\Bigl(\|D^{s-1}(|q|^2-1)\|_{L^2}^2
+\|D^{s-1}(\d_x q)\|_{L^2}^2\Bigr)^{\frac12},
\end{equation}
and in particular 
$$(E^1(q))^2=\cE(q),\quad (E^0(q))^2=\|D^{-1}(|q|^2-1)\|_{L^2}^2+\|D^{-1}(\d_xq)\|_{L^2}^2.$$
Obviously the energy norm  $E^1(q)$ is conserved by the Gross-Pitaevskii flow.
We aim to show (almost) conservation of the energy norm $E^s(q)$ for all $s\geq 0$, and hence the global-in-time wellposedness for all $s\geq 0$.
In the present paper we consider more general finite-energy 
spaces 
\begin{equation}\label{Xs}
X^s= \bigl\{ q\in H^s_{\loc}(\R)\,:\, |q|^2-1\in H^{s-1}(\R), \quad \d_x q\in H^{s-1}(\R) 
\bigr\}/ {\mathbb{S}^1},
\quad s\geq 0.
\end{equation} 
We observe that neither 
 mass $\mathcal{M}$ (see \eqref{cM}) nor momentum $\mathcal{P}$ (see \eqref{cP}) can be defined on any $X^s$, $ s \ge 0$.

\smallbreak

In the following we are going to state  properties of the metric space $(X^s, d^s)$ in Subsection \ref{subs:Results} together with a  conserved quantity, the renormalized momentum  $H_1$, on the universal covering space of the energy space.
Then  we will state the main results Theorem \ref{thm:GP} on conserved energies  for  $s\geq 0$ in Subsection \ref{subs:EnergyResults}, which follows from   more general  properties of the (renormalized) transmission coefficients associated to the Gross-Pitaevskii equation in Theorem \ref{thm:energies} in the low regularity regime.
Some of the proofs and the ideas   will be sketched in Subsection \ref{subs:Ideas}. The proofs of the central results are found in Section \ref{subs:Theta} and Section \ref{sec:T}.

\subsection{The metric space \texorpdfstring{$(X^s, d^s)$}{(X\textasciicircum s,d\textasciicircum s)}}\label{subs:Results} 
We first recall some properties of   the generalized finite-energy  space $X^s$ defined in \eqref{Xs} from \cite{KL}.  
 \begin{thm}[Properties of the metric space, \cite{KL}]\label{thm:Xs}
 The distance function $d^s(\cdot,\cdot)$ on the space $X^s$ is defined by 
 \footnote{Here the weight function $\sech(x)=\frac{2}{e^{x}+e^{-x}}$ can be equivalently replaced by any other strictly positive and smooth function with fast decay at infinity.}
 \begin{equation}\label{ds}\begin{split}
&d^s(p,q)=
\Bigl(\int _{ \R}
 \inf_{|\lambda(y)|=1} \bigl\|\sech( \cdot-y) (\lambda p-q)\bigr\|_{H^{s}(\R)}^2 \dy
 \Bigr)^{\frac 12}.
\end{split}\end{equation} 
  Then the metric space $(X^s, d^s)$, $s\geq 0$ has the following properties:
 \begin{itemize}
\item The space $(X^s, d^s(\cdot,\cdot))$ is a complete metric space.
\item The subset  $\{q\,|\, q-1\in C^\infty_0(\R)\}$
is dense in $X^s$. 

\item Any set $\{ q\in H^s_\loc(\R):  \Vert \d_x q \Vert_{H^{s-1}(\R)} + \Vert |q|^2-1 \Vert_{H^{s-1}(\R)} < C\}$ is contained in some ball $B^s_r(1)$ with $r$ depending on $C$.
\item Any closed  ball $\overline{B_r^s(q)}$  in $X^s$, $s>0$ is weakly sequentially compact. 
\item There is an analytic structure on $X^s$ which is compatible with the metric. 
\item There exists an absolute constant $c$ such that 
  \begin{equation*}\label{diameter}   d^s(1,q) \le cE^s(q). \end{equation*} 
  If $p \in X^s$ and $q \in H^s_{loc} $ so that $d^s(p,q) < \infty$, then
  $ q\in X^s$ and 
  \footnote{The coefficient before $d^s(p,q)$ is corrected from 
  $c(1+\|p'\|_{H^{s-1}}^{\frac12}+\||p|^2-1\|_{H^{s-1}}^{\frac12})$ (in \cite{KL}) to $c(1+\|p'\|_{H^{s-1}}+\||p|^2-1\|_{H^{s-1}}^{\frac12})$ below.}
\begin{equation*}\label{Es,ds,2}
 E^s(q) \le E^s(p) + c(1+\|p'\|_{H^{s-1}}+\||p|^2-1\|_{H^{s-1}}^{\frac12})d^s(p,q)+ c(d^s(p,q))^2. 
 \end{equation*}  

 \end{itemize}
 \end{thm}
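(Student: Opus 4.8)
\emph{Overall strategy.} Everything is driven by the description of $d^s$ as an $L^2$-in-$y$ superposition of $H^s$-distances localized by $\sech(\cdot-y)$ and phase-matched through the free constant $\lambda(y)$. The plan is to establish the quantitative comparison of the last bullet first, since the boundedness statement of the third bullet is then immediate: any $q$ in the set considered there satisfies $E^s(q)\le\|\d_x q\|_{H^{s-1}}+\||q|^2-1\|_{H^{s-1}}<C$, whence $d^s(1,q)\le cE^s(q)<cC$ and $q\in B^s_{cC}(1)$.

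\emph{The energy comparison.} For $d^s(1,q)\le cE^s(q)$ I would estimate window by window. On a window centred at $y$ on which $q$ stays away from $0$ I use the polar form $q=\rho e^{i\phi}$, so $|q|^2-1=\rho^2-1$ and $\d_x q=(\d_x\rho+i\rho\,\d_x\phi)e^{i\phi}$; taking $\lambda(y)=e^{i\phi(y)}$ and exploiting the invariance of the $H^s$ norm under unimodular constants turns $\sech(\cdot-y)(\lambda(y)-q)$ into $\sech(\cdot-y)\bigl(1-\rho e^{i(\phi-\phi(y))}\bigr)$, which is governed by $\rho-1$ and by $\phi-\phi(y)=\int_y^{\cdot}\d_x\phi$, both controlled by the energy densities; integrating in $y$ and using the decay of $\sech$ to absorb the tails gives the bound. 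The reverse inequality, yielding $q\in X^s$ together with the stated bound on $E^s(q)$ when $d^s(p,q)<\infty$, follows from the same local computation linearized around $p$: writing $q=\lambda(y)p-r$ on each window (with $\lambda(y)$ constant there) and using $|\lambda(y)|=1$, the identities $|q|^2-1=|p|^2-1-2\Re\bigl(\overline{\lambda(y)p}\,r\bigr)+|r|^2$ and $\d_x q=\lambda(y)\d_x p-\d_x r$ produce the term linear in $r$ (hence the factor $1+\|p'\|_{H^{s-1}}+\||p|^2-1\|_{H^{s-1}}^{1/2}$ coming from the product estimates) and the genuinely quadratic contribution $c(d^s(p,q))^2$ coming from $|r|^2$.

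\emph{Completeness, density, compactness and charts.} A $d^s$-Cauchy sequence is Cauchy in each weighted local $H^s$-space, hence has a local limit $q$ by completeness of $H^s_{\loc}$, and the energy comparison forces $q\in X^s$ with $d^s(q_n,q)\to0$; the only subtlety is that the windowed limits are defined only up to a phase, so I would fix a representative on one reference window and check that the overlap phases glue consistently because the Cauchy tails pin them down. For density I would, given $q$ and a large $R$, keep $q$ unchanged on $[-R,R]$ and relax its phase back to a multiple of $2\pi$ across a \emph{modulus-one} transition layer, so that $|q|^2-1$ is left untouched while $\d_x q$ only picks up an $O(R^{-1})$ contribution; routing the phase to the \emph{nearest} multiple of $2\pi$ keeps the phase travel bounded (which is exactly why the asymptotic phase is naturally valued in $\R/2\pi\Z$), and a final mollification finishes the approximation. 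Weak compactness for $s>0$ follows since a ball is bounded in the two energy quantities: along a subsequence $q_n$ converges weakly in $H^s_{\loc}$ and, by the Rellich embedding $H^s_{\loc}\hookrightarrow L^2_{\loc}$ that is compact precisely for $s>0$, strongly in $L^2_{\loc}$; this strong local convergence is what lets $|q_n|^2$ pass to the limit, and the weak lower semicontinuity of $E^s$ and of $d^s$ keeps the limit inside the ball. The analytic structure is read off from charts sending $q$ to its phase-matched difference from a reference element into the model Hilbert space of localized $H^s$ data, the transition maps depending analytically on the data through the minimizing phase $\lambda$.

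\emph{Main obstacle.} The crux throughout is the phase in the genuinely low-regularity range: for $s\le 3/2$ the quantity $|q|^2-1\in H^{s-1}$ need not be bounded and $q$ may vanish (as the black soliton already does), so the polar decomposition $q=\rho e^{i\phi}$ is simply unavailable on windows meeting the zero set, and the only robust object is the local minimizing phase $\lambda(y)$. Carrying the polar estimates, the gluing in the completeness proof, the modulus-one transition in the density proof, and the analyticity of the transition maps all through these overlapping local phases — in the absence of any global phase — is where the essential difficulty lies.
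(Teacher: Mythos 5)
You should first be aware that the present paper does not prove Theorem \ref{thm:Xs} at all: it is recalled verbatim from \cite{KL} (with a footnote correcting one coefficient), so your attempt has to be measured against the proof there, which is built on localization by a fixed partition of unity, discrete slowly-varying phases in $l^2_d$, and regularization of $q$ — not on the scheme you propose. The genuine gap in your proposal is the one you yourself name in your closing paragraph. Your proof of the central estimate $d^s(1,q)\le cE^s(q)$ (and of the perturbed inequality for $E^s(q)$) runs entirely through the polar decomposition $q=\rho e^{i\phi}$ on windows where $q$ avoids zero, and you then declare the windows meeting the zero set to be ``where the essential difficulty lies'' without resolving them. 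That is not a peripheral case: the black soliton $q_0=\tanh$ vanishes, zeros cannot be removed by small perturbations in $X^s$, and every other bullet in your outline (completeness, density, compactness, charts) leans on this same comparison. Worse, for the range $s\in[0,1)$ for which the theorem is actually needed in this paper, $H^{s-1}$ is a space of negative order: neither ``$\rho-1$ controlled by the energy densities'' nor ``$\phi-\phi(y)=\int_y^{\cdot}\d_x\phi$'' has any pointwise meaning, so the window-by-window pointwise argument cannot even be set up. Handling precisely this combination (zeros of $q$ plus negative-order norms) is the substance of the theorem; in \cite{KL}, and in the related arguments of this paper, it is done by smoothing (e.g. $r=\tau^2D_\tau^{-2}q$ as in Lemma \ref{lem:regular}) and by trading smallness of $|r|$ against lower bounds on the local energy, none of which appears in your sketch.

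A second, independent defect: you propose to obtain the analytic structure from charts whose transition maps depend ``analytically on the data through the minimizing phase $\lambda$''. The minimizer in \eqref{ds} need not be unique, and it does not depend analytically — or even continuously — on $q$, so no atlas can be built from it. The construction recalled immediately after the theorem avoids minimization altogether: it uses a fixed partition of unity, phase functions $\theta=\sum_n a_n\rho(x/L-n)$ with $(a_n)\in l^2_d$, and the linear space $\tilde H^s$, with bilipschitz bounds depending only on $E^s(q)$. Your outline of completeness and weak compactness is plausible in spirit, but since both are routed through the unproved comparison estimate and the unavailable pointwise phase control, the proposal as it stands does not constitute a proof.
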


The analytic structure defined in \cite{KL} can be described as follows. 
We fix a function $\eta\in C^\infty_0([-4,4])$ with $\eta=1$ on $[-2,2]$. 
 We fix a partition of unity
 \[ 1=\sum_{n\in \Z} \rho(x-n). \] 
 We define the Hilbert space of real valued sequences $ l^2_d$ equipped with the norm 
\[ \|(a_n)_n\|_{l^2_d}=\bigl(\sum_{n}|a_n-a_{n-1}|^2\bigr)^{\frac12} \] 
and, given $q\in X^s$ and given $ L>0$,   we define
 \[ \tilde H^s =\{\fb\in H^s(\R)\,|\,\langle \eta(x/L-n)\fb,\eta(x/L-n) q\rangle_{H^s}\in \R,\quad \forall n\in\Z\}. \] 
We have shown in \cite{KL}:   Given $ q \in X^s$, there exist $L>0$ and $r,R>0$ depending only on $ E^s(q)$, so that  we can  define  a map 
 \[  \Psi:   l^2_d \times \tilde H^s  \to X^s \] 
 by 
\[ \Psi( (a_n), \fb) = e^{i\theta}(q+\fb),\] 
 \[ \hbox{ with } \theta(x) = \sum_{n\in \Z} a_n \rho(x/L-n). \]
Here the map   $\Psi$ restricted to  $B_r^{l^2_d\times \tilde H^s}((0)_n, 0)$ is bilipschitz to its range, 
which contains the ball $ B_R^{X^s}(q)$ in $X^s$,
and the related coordinate changes and their Fr\'echet derivatives are bounded by some constants depending only on $E^s(q)$ and $s$.

 The topology of the metric space $(X^s,d^s)$ is nontrivial. It is described by the following theorem which  we will  prove in Section \ref{subs:Theta}. 
 \begin{thm}[Topology of the metric space] \label{thm:homotopy}
Let $s\geq 0$.
Let 
\[ Q = \{ q_c: -1\le c \le 1 \} \subset X^s \] 
where $q_c$ denotes the `profile' of the soliton solutions given in \eqref{eq:soliton}
$$
q_c=q_c(x)=\sqrt{1-c^2}\tanh(\sqrt{1-c^2}x)+ic,\quad c\in [-1,1],
$$
and we recall that $ q_1= q_{-1}=1$ in $X^s$ due to the identification of functions differing only by phase. 

Then $Q$ is a strong deformation retract of $(X^s, d^s)$,
which means that there is a continuous map (called deformation) 
\[ \Xi: [0,1]\times  X^s  \to X^s \] 
so that 
\begin{enumerate} 
\item $ \Xi(0,q) = q   ,\quad\forall q\in X^s$,
\item $ \Xi(t, q_c) = q_c  ,\quad \forall t\in [0,1]$, $\forall c\in [-1,1]$,
\item $ \Xi(1,q) \subset Q  ,\quad \forall q\in X^s$.
\end{enumerate} 
\end{thm}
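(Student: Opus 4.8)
The plan is to realise the deformation as a phase-normalised linear interpolation towards the matching soliton, the matching being dictated by the asymptotic phase change. First I would construct the asymptotic (regularised) phase change
\[
 \Theta\colon X^s\to \R/2\pi\Z,\qquad \Theta(q)=\arg\frac{q(+\infty)}{q(-\infty)},
\]
a continuous functional. On the profiles in $Q$ it is computed directly from \eqref{eq:soliton}: writing $c=\sin\phi$ with $\phi\in[-\tfrac\pi2,\tfrac\pi2]$ one has $q_c(\pm\infty)=\pm\cos\phi+i\sin\phi$, hence $\Theta(q_c)=2\phi-\pi\pmod{2\pi}$. As $c$ runs through $[-1,1]$ (with $c=\pm1$ giving the constants $\pm i$, both equal to $1$ in $X^s$) this is monotone and sweeps $\R/2\pi\Z$ exactly once, so $\Theta|_Q$ is a homeomorphism of the circle $Q$ onto $\R/2\pi\Z$. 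Composing yields the retraction
\[
 r:=(\Theta|_Q)^{-1}\circ\Theta\colon X^s\to Q,
\]
which is continuous once $\Theta$ is. I write $q_\star:=r(q)\in Q$ for the matching soliton.

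Given $[q]\in X^s$ I would then choose the phase-aligned representative $\tilde q=e^{i\alpha}q$ determined (in the regularised sense) by $\tilde q(+\infty)=q_\star(+\infty)$. Since $\Theta(\tilde q)=\Theta(q_\star)$ by construction, alignment at $+\infty$ forces alignment at $-\infty$ as well, so that $h:=\tilde q-q_\star$ tends to $0$ at both ends. I then set
\begin{equation}\label{eq:prop-homotopy}
 \Xi(t,[q])=\bigl[(1-t)\tilde q+t\,q_\star\bigr]=\bigl[q_\star+(1-t)h\bigr],
\end{equation}
which is independent of the chosen representative of $[q]$ because the alignment pins $\tilde q$ uniquely. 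Property (1) is $\Xi(0,\cdot)=\mathrm{id}$ and property (3) is $\Xi(1,[q])=[q_\star]\in Q$; for property (2), if $q=q_c$ then $q_\star=q_c$ and $h=0$, so $\Xi(t,q_c)=q_c$ for all $t$.

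Two points then remain. First, that \eqref{eq:prop-homotopy} takes values in $X^s$. The interpolant differs from $q_\star\in X^s$ by the function $(1-t)h$, so it suffices to know $h\in H^s(\R)$, equivalently $d^s(q,q_\star)<\infty$; this follows from Theorem \ref{thm:Xs} via $d^s(q,q_\star)\le d^s(q,1)+d^s(1,q_\star)\le c\,E^s(q)+c\,E^s(q_\star)<\infty$, after which the $X^s$-membership of the interpolant is immediate from the bullet of Theorem \ref{thm:Xs} stating that finite $d^s$-distance to a point of $X^s$ keeps one in $X^s$. The identification $h\in H^s$ itself uses that, the asymptotic phases being matched, the local infimising phases $\lambda(y)$ in \eqref{ds} may be taken $\equiv 1$, so that by the standard $\sech$-localisation $\|h\|_{H^s}^2\simeq\int_\R\|\sech(\cdot-y)\,h\|_{H^s}^2\dy\simeq d^s(q,q_\star)^2<\infty$. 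Second, the continuity of $\Xi$ on $[0,1]\times X^s$ reduces, through the bilipschitz charts $\Psi$ of Theorem \ref{thm:Xs}, to continuity in $t$ (immediate from $d^s(q_\star+(1-t_1)h,\,q_\star+(1-t_2)h)\lesssim|t_1-t_2|\,\|h\|_{H^s}$) together with the continuity of the two maps $[q]\mapsto q_\star$ and $[q]\mapsto h\in H^s$.

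The crux, and the step I expect to be the main obstacle, is therefore the construction of the regularised phase change $\Theta$ and of the aligned difference $[q]\mapsto h$, together with their continuity, in the low regularity range $s\ge 0$. For small $s$ the functions of $X^s$ have no pointwise boundary values, so $q(\pm\infty)$, the alignment, and $\Theta$ itself must be defined through a regularisation (the ``asymptotic regularised conserved phase change'' of the introduction, developed in Section \ref{subs:Theta}). I note that the apparent discontinuity of the aligned representative at the base point $[1]\in Q$, where the two sides of $Q$ force boundary values $\pm i$, is harmless: the two choices differ by the global phase $-1$ and hence define the same point in the quotient $X^s$, so $\Xi$ remains continuous there precisely because of the $\mathbb{S}^1$-identification — this is also what encodes the nontrivial $\pi_1(X^s)=\Z$. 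Once $\Theta$ and $h$ are under control, the deformation \eqref{eq:prop-homotopy} and the verification above complete the proof that $Q$ is a strong deformation retract of $(X^s,d^s)$.
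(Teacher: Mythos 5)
Your construction hinges on a functional that does not exist: a continuous map $\Theta\colon X^s\to\R/2\pi\Z$ given by the asymptotic phase change. The obstruction is not the lack of pointwise boundary values at low regularity (which you acknowledge and hope to cure by regularisation), but the fact that $d^s$-convergence gives no control whatsoever on winding at infinity, so no regularisation can help. First, $\Theta$ is undefined on a large part of $X^s$ even for smooth representatives: the paper's example $e^{i\ln(1+|x|)}\in X^1$ has no asymptotic phase, and Theorem \ref{thm:E1} extends $\Theta$ only to the strict (dense) subset $\{q\in X^0:\ \d_x q\in L^1\}$. Second, and fatally, no continuous functional on $X^s$ can agree with the winding on that dense subset: take $q_n=e^{i\phi_n}$ with $\phi_n=0$ for $x\le 0$, $\phi_n$ linear on $[0,n^4]$, and $\phi_n\equiv(2n+1)\pi$ for $x\ge n^4$ (corners smoothed). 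Then $|q_n|=1$, $\d_x q_n\in L^1$, and $(E^1(q_n))^2=((2n+1)\pi)^2n^{-4}\to 0$, so $d^s(q_n,1)\le c\,E^s(q_n)\to 0$ by Theorem \ref{thm:Xs}, while the asymptotic phase change of $q_n$ equals $\pi\pmod{2\pi}$ for every $n$ and that of the limit $1$ is $0$. Hence $(\Theta|_Q)^{-1}\circ\Theta$ cannot be made continuous, however $\Theta$ is regularised. This is exactly why the paper's proof never looks at infinity: it first deforms ($\Xi_1$, a dilation of the $x$-variable) so that the regularisation $\tilde r$ of the deformed function has no zeros outside a fixed interval $[-R,R]$, and then retracts using the ratio $\tilde r(R)/\tilde r(-R)$ at the two \emph{finite} points $\pm R$, which Lemma \ref{lem:inj} matches homeomorphically with $Q$. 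Point evaluation of the regularised function is $d^s$-continuous; the winding at infinity is not, and the two functionals genuinely differ (on the sequence $q_n$ above the paper's retraction tends to $q_{\pm 1}=1$, as continuity forces it to).

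There is a second, independent gap. Even granting a continuous choice of matching soliton $q_\star$, alignment of asymptotic phases does not give $h=\tilde q-q_\star\in H^s$, so your interpolation $[q_\star+(1-t)h]$ need not stay in $X^s$. Matched limits say nothing about the rate of approach: for $q=q_\star e^{i\phi}$ with $\phi$ smooth, $\phi(x)=|x|^{-1/4}$ for $|x|\ge 1$, one has $\d_x q\in L^1$, matched asymptotic phases, and $d^s(q,q_\star)<\infty$ (choose $\lambda(y)\approx e^{-i\phi(y)}$ in \eqref{ds}), yet $q-q_\star=q_\star(e^{i\phi}-1)\notin L^2$. Your assertion that ``the local infimising phases $\lambda(y)$ in \eqref{ds} may be taken $\equiv 1$'' is precisely what fails: the gauge freedom in $d^s$ exists because such slowly varying phases are not removable, and finiteness of $d^s(q,q_\star)$ does not imply $\|q-q_\star\|_{H^s}<\infty$ for any representative. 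The paper bridges this by the unwinding deformation $\Xi_2$, which multiplies by $\exp\bigl(it(\alpha(\cdot\,;B(q))-\alpha(\cdot\,;\tilde r))\bigr)$ to remove the non-$L^2$ phase discrepancy continuously in $t$; only after this step is the difference to the target in $L^2$, so that the final linear deformation $\Xi_3$ is legitimate. So your plan is missing both key ingredients of the proof: a $d^s$-continuous substitute for $\Theta$ (finite-point phase comparison after a compactifying deformation) and a phase-unwinding homotopy preceding any linear interpolation.
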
 
This corrects a statement in \cite{KL} where we stated incorrectly that all balls in $X^s$ are contractible. In particular $(X^s, d^s)$ is homotopy equivalent to the circle. The following argument shows that the topology of the phase space is nontrivial.   If $q=|q|e^{i\varphi}\in X^1$  never vanishes and   $\d_x q\in L^1$, we can  define the asymptotic change of the phase on the real line as

\begin{equation}\label{eq:theta}
\begin{split} 
 \Theta(q)\, &  := \Im \int_{\R} \frac{q}{|q|}\d_x\frac{\bar q}{|\bar q|}\dx
 \\ & =\Im\int_{\R}\frac{\d_x\bar q}{\bar q}\dx
 \\ & = -\lim_{x\rightarrow+\infty} (\varphi(x)-\varphi(-x))
\in \R/(2\pi \Z),
\end{split} 
\end{equation}
which is also conserved by the Gross-Pitaevskii flow (on the time interval where the solution never vanishes). It is a Casimir function which means that it Poisson commutes with every Hamiltonian: $ \frac{\delta \Theta}{\delta q} = 0 $, and hence the Hamiltonian vector field of $ \Theta$ vanishes.

If we consider the finite-energy solution $q\in X^1$ (even with no zeros), then the above phase change function may not be well-defined (keeping in mind of the example $e^{i\ln(1+|x|)}\in X^1$). 
We are going to extend the definition of $\Theta$ to general $q\in X^0$ with $\d_x q\in L^1$ (see Theorem \ref{thm:E1} below for more details), and similar as the mass $\cM$, this quantity $\Theta$    
will play an important role  later in the analysis of low regularity case.

Notice that  the (soliton) solutions defined \eqref{eq:soliton} have the following asymptotic behaviors at infinity (modulo $\mathbb{S}^1$):
$$
\lim_{x\rightarrow\infty} q_c(t,x)=\sqrt{1-c^2}+ic,
\quad \lim_{x\rightarrow-\infty} q_c(t,x)=-\sqrt{1-c^2}+ic,\quad c\in (-1,1),
$$
and in particular under the identification modulo $\mathbb{S}^1$ in $X^s$,
$$
q_{-1}=q_1=1\hbox{ in }X^s.
$$
If we consider  modulo $2\pi \Z$, then 
 \[ \Theta(q_c) = 2 \arccos(c), \]
and $c \to \Theta(q_c) \in \R/(2\pi \Z)$ is continuous. However it is not possible to lift   $ \Theta(q_c): [-1,1]\mapsto \R$ continuously. 
Notice that $c\to 2\arccos(c)$ is a monotonically  decreasing function 
 $$  [-1,1]\ni c\mapsto 2\arccos(c)\in [0,2\pi] \hbox{ with }
 -1,0,1\mapsto 2\pi, \pi, 0 \hbox{ respectively. }$$
 Thus 
 \begin{equation} \label{eq:notsimple}  \{ q \in X^s:  \d_x q \in \mathcal{S} \} \end{equation}
  is not simply connected (indeed Theorem \ref{thm:homotopy} implies that $X^s$ is homotopy equivalent to the circle). 
 Nevertheless the asymptotic phase change $\Theta$ is closely related to the momentum $\cP$ by view of the Hamiltonian $H_1$ defined below.  
\begin{thm}[Asymptotic phase change $\Theta$ and renormalized momentum $H_1$] \label{thm:E1}
The asymptotic change of phase $\Theta$ has a unique continuous and smooth extension to $\{q\in X^0: \d_x q \in L^1\}  $ modulo $2\pi\Z$
\[ \Theta:  \{ q \in X^0: \d_x q \in L^1 \}  \to \R/(2\pi \Z).  \]

The Hamiltonian 
\begin{equation}\label{H1,nonzero} H_1(q) = -\Im \int_{\R} \bigl( (|q|^2 -1) \partial_x \log  q \bigr)\, dx \in \mathbb{R}/ (2\pi \Z)  \end{equation}
which is defined on 
\[ \{ q\in  X^{\frac12+\varepsilon} :  q\ne 0 \}, \quad\varepsilon>0 \] 
has a unique continuous and smooth extension to $X^{\frac12}$ modulo $2\pi \Z$. 
It is conserved under the flow of the Gross-Pitaevskii  equation \eqref{GP}.
\end{thm}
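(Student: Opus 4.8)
The plan is to treat the two claims separately, extending $\Theta$ first, then $H_1$, and finally establishing the conservation of $H_1$. For $\Theta$, I would begin with the observation that $\d_x q\in L^1$ makes $q$ absolutely continuous with well-defined limits $q_\pm=\lim_{x\to\pm\infty}q(x)$, while the constraint $|q|^2-1\in H^{-1}(\R)$ forces $|q_\pm|=1$, since a nonzero constant does not lie in $H^{-1}(\R)$. I would then \emph{define} the extension by $\Theta(q):=\arg(q_-\ov{q_+})\in\R/(2\pi\Z)$, which is invariant under the $\mathbb{S}^1$-action $q\mapsto e^{i\alpha}q$ and hence descends to $X^0$. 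When $q$ has no zeros this agrees with \eqref{eq:theta}, because $\Im\int_\R \d_x\log q\,\dx=\arg q_+-\arg q_-$ along a continuous branch of $\log q$. Continuity and smoothness hold in the natural (finer) topology of the domain, in which $\d_x q$ varies in $L^1$: there the endpoint map $q\mapsto(q_-,q_+)\in\mathbb{S}^1\times\mathbb{S}^1$ is continuous (indeed affine-analytic, obtained by integrating $\d_x q$ against the chart $\Psi$ of Theorem \ref{thm:Xs}), and $\arg$ is smooth off $0$. Uniqueness is then automatic from the density of the nowhere-vanishing functions, obtained from the dense class $\{q-1\in C_0^\infty\}$ of Theorem \ref{thm:Xs} by a small compactly supported imaginary perturbation removing the (compactly contained) zeros.

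For $H_1$ the starting point is the algebraic identity
\[ (|q|^2-1)\,\d_x\log q=\Big(\ov q-\tfrac1q\Big)\d_x q, \]
so that taking imaginary parts gives, formally, $H_1(q)=\cP(q)-\Theta(q)$ with $\cP$ the momentum \eqref{cP}. Neither $\cP$ nor $\Theta$ is defined on $X^s$, but their difference is renormalized: subtracting $\tfrac1q\,\d_x q=\d_x\log q$ removes exactly the non-integrable phase winding carried by $\ov q\,\d_x q$, leaving $\tfrac{|q|^2-1}{q}\d_x q$, in which the decaying factor $|q|^2-1$ restores integrability. I would first make this precise on $\{q\in X^{\frac12+\varepsilon}:q\ne0\}$, where $q$ is bounded below, $1/q$ is an $L^\infty\cap H^{\frac12+\varepsilon}_{\loc}$ multiplier, so $\d_x\log q\in H^{-\frac12+\varepsilon}$, and the pairing against $|q|^2-1\in H^{-\frac12+\varepsilon}$ must be bounded by a polynomial in $E^{\frac12+\varepsilon}(q)$. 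Here $\varepsilon>0$ is the $1$-dimensional threshold, but a \emph{generic} product estimate fails at $s=\tfrac12$; one must instead exploit the correlated quadratic structure of the two factors, which is precisely what the (renormalized) transmission-coefficient analysis of Theorem \ref{thm:energies} supplies.

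The heart of the matter is the extension across the zero set of $q$ and down to the critical index $s=\tfrac12$. I would proceed by density and uniform continuity: the nowhere-vanishing functions in $X^{\frac12+\varepsilon}$ are dense in $X^{\frac12}$, and it suffices to prove a local bound $|H_1(q_1)-H_1(q_2)|\le C\,d^{\frac12}(q_1,q_2)$ in $\R/(2\pi\Z)$, with $C$ depending only on $E^{\frac12}$ through the chart $\Psi$; in that chart $H_1$ becomes a function of $((a_n),\fb)$ which I expect to be real-analytic into $\R/(2\pi\Z)$. Crucially, the $2\pi\Z$-indeterminacy absorbs the $2\pi$-jumps produced when a zero of $q$ is created or destroyed. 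This is exactly the phenomenon visible on the soliton family: a direct computation gives
\[ H_1(q_c)=2c\sqrt{1-c^2}-2\arctan\!\frac{\sqrt{1-c^2}}{c}, \]
which tends to $-\pi$ as $c\to0^+$ and to $+\pi$ as $c\to0^-$, i.e.\ jumps by $2\pi$ as the zero of $q_0=\tanh$ appears, and is therefore continuous precisely modulo $2\pi$. The nontrivial topology of $X^s$ (Theorem \ref{thm:homotopy}, which gives $X^s\simeq\mathbb{S}^1$) is what obstructs a single-valued real lift and forces the $\R/(2\pi\Z)$ target.

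Finally, for conservation I would verify $\frac{d}{dt}H_1(q(t))=0$ for smooth, nowhere-vanishing solutions by inserting $i\d_t q=-\d_{xx}q+2q(|q|^2-1)$ from \eqref{GP} into \eqref{H1,nonzero} and integrating by parts; this is consistent with $H_1=\cP-\Theta$ modulo $2\pi$, since $\cP$ is conserved and $\Theta$ is a Casimir. Continuity of $H_1$ on $X^{\frac12}$, together with continuity of the Gross--Pitaevskii flow and density of smooth data, then upgrades conservation to all of $X^{\frac12}$: the $\R/(2\pi\Z)$-valued map $t\mapsto H_1(q(t))$ is continuous and locally constant on a dense set, hence constant. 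I expect the main obstacle to be concentrated in the $H_1$ part and to be twofold: (i) making sense of the bilinear pairing at the critical regularity $s=\tfrac12$, where the correlated structure of $|q|^2-1$ and $\d_x\log q$ rather than a product estimate must be used; and (ii) the $d^{\frac12}$-continuity of $H_1$ as $q$ passes through its zero set, where only the $2\pi\Z$-reduction can be continuous.
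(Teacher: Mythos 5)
Your treatment of $\Theta$ is correct and is a legitimate alternative to the paper's route: since $\d_x q\in L^1$ gives limits $q_\pm$ of modulus one, and since for non-vanishing $q$ the winding number modulo $2\pi$ depends only on the endpoints, the assignment $q\mapsto \arg\bigl(q_-\ov{q_+}\bigr)$ extends \eqref{eq:theta}; it agrees modulo $2\pi\Z$ with the paper's definition $\Theta(q):=\Theta(\tilde q)$, because the regularization $\tilde q$ of Lemma \ref{lem:tildeq} satisfies $q-\tilde q\in L^1$, $\d_x\tilde q\in L^1$, hence $\tilde q_\pm=q_\pm$. Your caveat that continuity can only hold in the finer topology in which $\d_x q$ converges in $L^1$ is correct (winding is not $d^0$-continuous), and your soliton computation exhibiting the $2\pi$-jump of \eqref{H1,nonzero} across $c=0$ is accurate and is precisely the phenomenon forcing the $\R/(2\pi\Z)$ target.

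The genuine gaps are in the $H_1$ part. You never produce a definition of $H_1$ that survives zeros of $q$ at regularity $s=\tfrac12$: your renormalized integrand $\frac{|q|^2-1}{q}\,\d_x q$ still contains $1/q$, so it is meaningless on the zero set, and the pairing you invoke, of $|q|^2-1\in H^{-\frac12+\varepsilon}$ against $\d_x\log q\in H^{-\frac12+\varepsilon}$, is not a duality pairing (both factors lie in negative-order spaces), while deferring the needed bound to ``the correlated structure supplied by Theorem \ref{thm:energies}'' is circular — the renormalization in that theorem is itself built out of $\Theta$ and $H_1$. The missing idea is the paper's Lemma \ref{lem:tildeq}: a map $q\mapsto\tilde q$, continuous on small $d^0$-balls, with $|\tilde q|=1$ and $q-\tilde q\in H^s$, which converts \eqref{H1,nonzero} into \eqref{eq:H1}, i.e.
\begin{equation*}
H_1(q)=\Im\int_{\R}\Bigl((q-\tilde q)\,\d_x\bar q-\ov{(q-\tilde q)}\,\d_x\tilde q\Bigr)\dx,
\end{equation*}
an honest $H^{\frac12}\times H^{-\frac12}$ pairing with no division by $q$. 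With this formula the continuity on $X^{\frac12}$ that you only assert (``it suffices to prove \dots'', ``I expect real-analytic'') becomes immediate, and different admissible $\tilde q$ shift the value by $2\pi\Z$.

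The conservation argument also does not close as you set it up: the class of smooth nowhere-vanishing solutions is not flow-invariant (zeros can form or persist — the black soliton has a zero for all times), so differentiating \eqref{H1,nonzero} in time yields conservation only up to the first vanishing time, and density over this class gives nothing beyond it; a continuous $\R/(2\pi\Z)$-valued function that is locally constant off the (possibly large) set of vanishing times need not be constant. The paper circumvents this by working on the flow-invariant dense class $1+\cS$, where one may take $\tilde q=1$, so that $\Theta\equiv 0$ and, by \eqref{P,H1}, $H_1=\cP$; conservation of the classical momentum \eqref{cP} there requires no non-vanishing assumption, and density in $X^{\frac12}$ together with the established continuity of $H_1$ completes the proof.
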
 
We sketch the proof of Theorem \ref{thm:E1} and leave the details for Section \ref{subs:Theta}.
For the extensions of both $\Theta$ and $H_1$, given $ q_0 \in X^s$,  we define a continuous map 
\[ B^{X^s}_\delta(q_0) \ni  q \to  \tilde q \] 
so that  $\tilde q \in X^{s+2}$ 
  does not vanish.  
For the extension of $\Theta$ on $ \{ q \in X^0: \d_x q \in L^1 \}$, we can take furthermore $\tilde q$ (see Lemma \ref{lem:tildeq}) such that
\[ q - \tilde q \in L^2, \d_x\tilde  q \in L^1, \] 
  and set (with an abuse of notations)
\[ \Theta = \Theta( \tilde q), \]
where  $ \Theta(\tilde q) $ is the winding number of $\tilde q$ given in \eqref{eq:theta}.
Different choices of $ \tilde q$ may lead to different values of $ \Theta(\tilde q)$, which  is nevertheless unique up to a multiple of $2\pi$, and $\Theta: \{ q \in B^{X^s}_\delta(q_0): \d_x q \in L^1 \} \mapsto \R/(2\pi\Z)$ is continuous.  

For the extension of $H_1$ on $X^{\frac12}$,  we proceed in a similar fashion, so that 
$$ q - \tilde q \in H^{\frac12}.$$
If $q\in X^{\frac12+\varepsilon}$ with $\varepsilon>0$ and $\d_x q, \d_x \tilde q\in L^1$, we define
\begin{equation}\label{H1,tildeq} 
H_1(q) = -\Im \int_{\R}\Bigl( \bar q \d_x q - \d_x \tilde q/\tilde q\Bigr) \dx \in \R / (2\pi \Z),  \end{equation}
which coincides with the definition \eqref{H1,nonzero}   if $ q$ never vanishes.  
The first term is  the momentum $\cP$ (see \eqref{cP}) and the second term is again the winding number of $\tilde q$ if $\d_x \tilde q \in L^1$:
\begin{equation}\label{P,H1}
H_1(q)=\cP(q)-\Theta(q).
\end{equation}
On the other hand, we may rewrite \eqref{H1,tildeq} as
\begin{equation}\label{eq:H1} 
H_1(q)=-\Im   \int_{\R}\Bigl( (\overline{q - \tilde q} ) \d_x q - \d_x \overline{\tilde q} (q-\tilde q) + \frac1{\tilde q} (|\tilde q |^2-1) \partial_x \tilde q\Bigr)\,  \dx,  \end{equation}  
which in this form is easily seen to be well defined on $X^{\frac12}$, and, with a continuous choice of $ \tilde q$, it is continuous resp. smooth modulo $2\pi \Z$.  
Note that   $H_1(q)$  cannot be defined on $X^s$ if $ 0 \le s < \frac12$, since the momentum $ \cP(q)=\Im \int_{\R} q \d_x \bar q dx $ cannot be defined even on $\{ q \in  X^s: q(x) = 1 \text{ if } |x| \ge 1 \} $.

To see the conservation of the Hamiltonian $H_1(q(t,x))$ under the Gross-Pitaevskii flow, we recall the construction of the solution  in  \cite[Section 2]{KL}. Let $q_0 \in X^s$, $s\geq 0$ (more precisely a representative of the equivalence class) and 
 let $ \tilde q \in X^\infty$ be its  regularisation (depending only on the initial data $q_0$, but not on $t$, which may be different from the regularisation above).  
We make the Ansatz   
 $q= \tilde q + b$ for the solution of the Gross-Pitaevskii equation \eqref{GP} and  search for $b$ as a solution to 
\[ i\partial_t b + b_{xx} = 2|b|^2 b + 4|b|^2 \tilde q 
+ 2\overline{\tilde q}b^2
+(4|\tilde q|^2-2)b
+ 2(\tilde q)^2 b
+2 (|\tilde q|^2 -1) \tilde q - \tilde q_{xx}  \] 
with initial data $b(0) = q_0 - \tilde q\in H^s$. 
Using Strichartz estimates we obtain a unique   solution $ b \in C([0,T], H^s)$ for some time $T>0$. 
If $ q_0 \in 1 + \mathcal{S}$, then $q(t)\in 1+\cS$ and with $ \tilde q=1$ we have $\Theta (\tilde q)  =0 $. It is clearly independent of time.  
The momentum is conserved for $ q_0 \in 1 + \mathcal{S}$ and hence by virtue of the relation \eqref{P,H1}, $H_1$ is   conserved on $1+\mathcal{S}$.
By the density of the set $1+\cS$ in $(X^s, d^s)$ and the local continuity of $H_1$ in  $X^{\frac12}$,   $H_1$ is also conserved for initial data in $X^{\frac12}$.

 \subsection{Energies and transmission coefficient}  \label{subs:EnergyResults}
 Recently continuous families of conserved energies have been constructed for the Korteweg-de Vries  equation  (KdV), the modified KdV and NLS equations independently by Killip, Visan and Zhang \cite{KVZ}, and the first author and Tataru \cite{KT}. 
We recall the local-in-time well-posedness result of the Gross-Pitaevskii equation in $(X^s, d^s)$, $s\geq 0$, as well as the global-in-time well-posedness result for the case $s>\frac12$ (by constructing a family of conserved energy functionals) in our previous work \cite{KL} below.
\begin{thm}[LWP in $X^s$, $s\geq 0$ \& GWP in $X^s$, $s>\frac12$, \cite{KL}]\label{thm:KL}
 Let $s\geq 0$.  The Gross-Pitaevskii equation \eqref{GP} is
 locally-in-time well-posed in the metric space $(X^s, d^s)$ in the
 following sense: For any initial data $q_0\in X^s$, there exists a
 positive time $\bar t\in (0,\infty)$ and a unique local-in-time
 solution $q\in \cC((-\bar t, \bar t); X^s)$ of \eqref{GP} and for any
 $t\in (0,\bar t)$, the Gross-Pitaevskii flow map $X^s\ni q_0\mapsto
 q\in \cC([-t, t]; X^s)$ is  
 continuous. 
 \footnote{
Here  the solution $q\in \mathcal{C}((-\bar t, \bar t); X^s)$ is defined in terms of the representatives in $(X^s, d^s)$ as follows. There is $\tilde q :  (-\bar t, \bar t) \to H^s_{loc}(\R)$ which  
 satisfies    that
\begin{equation*} 
(-\bar t, \bar t)\ni t \to  \tilde  q(t)-\tilde q(0) \in L^2(\R), 
\end{equation*}
is weakly continuous   and  
\begin{equation*}\label{Strichartz}   
\Vert \tilde q(\cdot) - \tilde q_{0,\varepsilon} \Vert_{L^4([a,b]  \times \R)} <\infty,  
 \end{equation*}
for some regularized initial data $\tilde q_{0,\varepsilon}$ of $\tilde q(0)$ and for all time intervals $ [a,b] \subset (-\bar t, \bar t)$ with $0\in [a,b] $,
such that the equation \eqref{GP} holds in the distributional sense on $(-\bar t, \bar t)\times\R$
and $\tilde q(t)$ projects to $q(t)$.}

  Let $s>\frac12 $,
 then the above holds for all $\bar t\in\R^+$ and hence the
 Gross-Pitaevskii equation \eqref{GP} is globally-in-time well-posed
 in the metric space $(X^s, d^s)$.
 Furthermore, for any initial data $q_0\in X^s$, there exists $\tau_0 $  depending only on $E^{s}(q_0)$ such that   the unique solution $q\in \cC(\R; X^s)$  of the Gross-Pitaevskii equation \eqref{GP}   satisfies  the following uniform bound
 \begin{equation}\label{ConsEnergy:KL}\begin{split}
&E^{s}_{\tau_0}( q(t)) \leq  2E^{s}_{\tau_0}( q_0),
\quad \forall t\in\R.
\end{split}\end{equation}  
\end{thm}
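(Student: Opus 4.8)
The plan is to split the statement into a local existence/uniqueness/continuity part valid for all $s\ge 0$, handled by a perturbative Strichartz argument around a smooth background, and a globalisation part for $s>\frac12$, handled by producing a conserved (or almost conserved) energy $E^s_{\tau_0}$ comparable to $E^s$.

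First, for the local theory I would fix a representative of $q_0\in X^s$ and, following the Ansatz already used in the excerpt, write $q=\tilde q+b$ where $\tilde q\in X^\infty$ is a smooth regularisation of $q_0$ with $q_0-\tilde q\in H^s$ and with $|\tilde q|^2-1$ and $\d_x\tilde q$ rapidly decaying; by Theorem \ref{thm:Xs} all relevant norms of $\tilde q$, and the chart parameters $L,r,R$, can be arranged to depend only on $E^s(q_0)$. Substituting into \eqref{GP} converts the problem into a semilinear Schr\"odinger equation for $b$ with datum $b(0)=q_0-\tilde q\in H^s$, whose nonlinearity is the genuinely cubic term $2|b|^2b$ together with quadratic and linear terms whose coefficients are smooth, time-independent and rapidly decaying functions of $\tilde q$, plus the fixed smooth inhomogeneity $2(|\tilde q|^2-1)\tilde q-\d_{xx}\tilde q$. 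Since the cubic one dimensional NLS is $L^2$-subcritical (its scaling-critical index is $-\frac12$), Strichartz estimates close a contraction mapping on a short interval $[-\bar t,\bar t]$, yielding a unique $b\in\cC([-\bar t,\bar t];H^s)$ which also satisfies the $L^4([-\bar t,\bar t]\times\R)$ bound recorded in the footnote; the background terms are harmless because their coefficients are bounded and decaying, and $\bar t$ depends only on $\|b(0)\|_{H^s}$ and hence only on $E^s(q_0)$. This recovers the distributional solution $q=\tilde q+b$ together with the weak continuity and Strichartz properties described in the footnote.

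Then, to promote the $H^s$-continuous dependence of $b$ (which is built into the contraction) to continuity of the flow map in the metric $d^s$, I would use the bilipschitz charts $\Psi:l^2_d\times\tilde H^s\to X^s$ from Theorem \ref{thm:Xs}. On a small ball the metric $d^s$ is comparable to the product norm, so convergence of the $H^s$-part $b$ and of the associated discrete phase data transfers to convergence in $(X^s,d^s)$; covering the compact time interval by finitely many such charts gives continuity of $X^s\ni q_0\mapsto q\in\cC([-t,t];X^s)$. For the globalisation I would invoke the conserved energy $E^s_{\tau_0}$. For $s=1$ this is simply $(E^1)^2=\cE$ from \eqref{cEGL}, which is exactly conserved, and I would build the general $s>\frac12$ case from the scattering data of the Gross-Pitaevskii equation: the associated Zakharov--Shabat (Lax) operator with nonzero background has a (renormalised) transmission coefficient $T(\lambda)$ whose logarithm is a generating function for conserved quantities, and expanding $\log T$ in the spectral parameter produces a family of conserved functionals. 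I would define $E^s_{\tau_0}(q)$ as the appropriately renormalised such functional evaluated at a spectral height $\tau_0$ and establish two facts: (i) comparability, namely $E^s_{\tau_0}(q)\sim E^s(q)$ on energy balls once $\tau_0$ is chosen large depending only on $E^s(q_0)$; and (ii) conservation under the flow, which is automatic from the isospectrality of \eqref{GP}. Combining (i) and (ii) yields the bound \eqref{ConsEnergy:KL}, which keeps $E^s(q(t))$ uniformly bounded in $t$; feeding this a priori bound into the local theory, whose existence time depends only on $E^s$, extends the solution to all of $\R$.

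The main obstacle I expect is step (i): establishing the two-sided comparability of the transmission-coefficient energy $E^s_{\tau_0}$ with $E^s$ uniformly on energy balls, and doing so down to $s$ just above $\frac12$. This forces one to control the nonlinear dependence of $\log T(\lambda)$ on $q$ through its multilinear expansion, matching each term to the correct Sobolev norm and absorbing the higher-order remainder into the leading quadratic part by the choice of $\tau_0=\tau_0(E^s(q_0))$; the threshold $s=\frac12$ appears precisely because the momentum-type contribution entering \eqref{H1,tildeq}--\eqref{P,H1} ceases to be definable on $X^s$ below it. The extension to the full low-regularity range $0\le s\le\frac12$, which is the subject of the present paper, is exactly what this theorem does not reach and is what the regularisation and topological analysis announced in the abstract are designed to supply.
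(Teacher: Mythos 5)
Your proposal is correct and follows essentially the same route as the paper's source for this theorem (the authors' previous work \cite{KL}, which the present paper only recalls): the local theory via the Ansatz $q=\tilde q+b$ with $\tilde q$ a smooth regularisation and $b$ solving the resulting semilinear Schr\"odinger equation by Strichartz/contraction (exactly the construction sketched at the end of Subsection \ref{subs:Results}), and the globalisation for $s>\frac12$ via conserved energies built from the renormalized transmission coefficient of the Zakharov--Shabat operator, made comparable to $(E^s_{\tau_0})^2$ by choosing $\tau_0$ large depending on the data (the mechanism behind \eqref{Equivalence} and \eqref{ConsEnergy:KL}). Your diagnosis of the $s=\frac12$ threshold --- that the momentum renormalization in \eqref{eq:oldworkexp} ceases to make sense below $X^{\frac12}$ --- is also the correct one, and is precisely what this paper's replacement of $\cP$ by $\Theta$ is designed to overcome.
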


In the above, we have used the  rescaled energy norm in the metric space $(X^s, d^s)$:
\begin{equation}\label{Estau}
E^s_\tau(q)=\Bigl(\| |q|^2-1\|_{H^{s-1}_\tau }^2
+\| \d_x q\|_{H^{s-1}_\tau }^2\Bigr)^{\frac12},\quad s\geq 0,
\end{equation}
where the rescaled Sobolev norm is defined as
\begin{equation}\label{Sobolev}
\|f\|_{H^{s}_\tau}=\|D^s_\tau f\|_{L^2},
\end{equation} 
and the  rescaled  operator 
$$D_\tau=(-\d_x^2+\tau^2)^{\frac12}$$ 
reads in terms of Fourier transform as 
$$\widehat{D_\tau f}(\xi)=(\xi^2+\tau^2)^{\frac12}\hat{f}(\xi).$$
Notice that when $\tau=2$, $D_2=D=\langle\d_x\rangle$ is defined in \eqref{D} and $E^s_2(q)=E^s(q)$ is the energy norm defined in \eqref{energy,D}.

We aim to show the almost conservation law \eqref{ConsEnergy:KL} for all $s\geq 0$, and in particular we are going to construct a family of conserved energies $\cE^s_\tau(q)$ which is equivalent to $(E^s_\tau(q))^2$ provided  $\tau^{-\frac12}E^0_\tau(q)$ is small.
Our main result reads as follows.
\begin{thm}[GWP and conserved energies in $X^s$, $s\geq 0$]\label{thm:GP} 
Let $s\geq 0$. Then the Gross-Pitaevskii equation \eqref{GP} is globally-in-time well-posed in the metric space $(X^s, d^s)$ in the sense in Theorem \ref{thm:KL}.
 Furthermore, there exist  a constant $C\geq2$  and  a family of  
analytic (in both variables $\tau$ and $q$) energy functionals $(\cE^{s}_{\tau})_{\tau\geq 2}: X^s\mapsto [0,\infty)$, such that
\begin{itemize} 
\item 
 $\cE^{s}_{\tau}(q)$  is equivalent to $(E^{s}_{\tau}(q))^2$ in the following sense:
 If $q\in X^s$ with $\frac{1}{\sqrt\tau } E^0_\tau(q)<\frac{1}{2C},$ then
\begin{equation}\label{Equivalence}\begin{split}
&|\cE^{s}_{\tau}(q)-(E^{s}_{\tau}(q))^2|
\leq  C \frac{E^0_\tau(q)}{\sqrt\tau }  (E^{s}_{\tau}(q))^2,  \quad s\geq 0.
\end{split}\end{equation}   

\item 
$\cE^{s}_{\tau}(\cdot)$, $\tau\geq 2$ is conserved by  Gross-Pitaevskii flow \eqref{GP}.
\end{itemize} 
Correspondingly,  for any initial data $q_0\in X^s$, there exists $\tau_0\geq C$  depending only on $E^{0}(q_0)$ such that   the unique solution $q\in \cC(\R; X^s)$  of the Gross-Pitaevskii equation \eqref{GP}   satisfies  the following energy bound:
 \begin{equation}\label{ConsEnergy}\begin{split}
&E^{s}_{\tau_0}( q(t)) \leq  2E^{s}_{\tau_0}( q_0),
\quad \forall t\in\R.
\end{split}\end{equation}  
\end{thm}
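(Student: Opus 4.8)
The plan is to reduce Theorem \ref{thm:GP} to the analysis of the renormalized transmission coefficient, that is, to Theorem \ref{thm:energies}, following the spectral approach of Koch--Tataru \cite{KT} and Killip--Visan--Zhang \cite{KVZ} but adapted to the nonzero background of the Gross--Pitaevskii equation. First I would attach to each $q\in X^s$ the Lax (Zakharov--Shabat) scattering problem and its transmission coefficient $T(\lambda)$, depending holomorphically on the spectral parameter $\lambda$ off a cut and analytically on $q$; since the spectrum is invariant under the integrable flow, $\log T(\lambda)$ is conserved. The energies are then defined by integrating $\Re\log(1/T)$ against a weight $m_{s,\tau}(\lambda)$ concentrated at $|\lambda|\sim\tau$ with the homogeneity that reproduces the multiplier $(\xi^2+\tau^2)^{s-1}$,
\[
\cE^s_\tau(q)=\int m_{s,\tau}(\lambda)\,\Re\log\frac{1}{T(\lambda)}\,d\lambda ,
\]
the weight being chosen precisely so that the part of $\cE^s_\tau$ quadratic in the perturbation of $q$ equals $(E^s_\tau(q))^2$. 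Analyticity in $\tau$ and $q$ and the conservation asserted in the second bullet are then inherited from the corresponding properties of $T$ recorded in Theorem \ref{thm:energies}.

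Next I would expand $\Re\log(1/T)$ as a convergent series $\sum_{k\ge 2}T_k(q)$ of terms homogeneous of degree $k$ in the perturbation, integrate against $m_{s,\tau}$, and separate the quadratic contribution $(E^s_\tau(q))^2$ from the remainder $\sum_{k\ge 3}$. The equivalence \eqref{Equivalence} then amounts to the multilinear bound $\bigl|\int m_{s,\tau}\,T_k\bigr|\lesssim \kappa^{k-2}(E^s_\tau(q))^2$ with coupling constant $\kappa\sim \frac{1}{\sqrt\tau}E^0_\tau(q)$; summing the geometric series under the hypothesis $\frac{1}{\sqrt\tau}E^0_\tau(q)<\frac{1}{2C}$ produces the stated estimate with an absolute $C$. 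The gain of $\tau^{-1/2}$ per extra factor comes from the high-frequency localization of the weight, while $E^0_\tau$ is exactly the scale-invariant quantity measuring the size of the perturbation for the cubic interaction. I expect this step --- establishing these multilinear estimates uniformly down to $s=0$ on the nonzero background, where the renormalization of $T$ is essential --- to be the main obstacle, and it is precisely the content I would isolate in Theorem \ref{thm:energies}.

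Finally I would assemble global well-posedness and the bound \eqref{ConsEnergy} by a bootstrap on top of the local theory of Theorem \ref{thm:KL}. Conservation of $\cE^s_\tau$ follows first on the dense class $1+\cS$, where $T$ is genuinely invariant, and then on all of $X^s$ by the density and continuity afforded by Theorem \ref{thm:Xs} and Theorem \ref{thm:KL}. For the a priori bound one notes that $\frac{1}{\sqrt\tau}E^0_\tau(q_0)\to 0$ as $\tau\to\infty$, so given $q_0$ I choose $\tau_0\ge C$, depending only on $E^0(q_0)$, large enough that the smallness hypothesis holds with room to spare. The case $s=0$ drives the bootstrap: since $\cE^0_{\tau_0}$ is conserved and equivalent to $(E^0_{\tau_0})^2$, the quantity $E^0_{\tau_0}(q(t))$ stays within a fixed factor of $E^0_{\tau_0}(q_0)$, which keeps the smallness hypothesis $\frac{1}{\sqrt{\tau_0}}E^0_{\tau_0}(q(t))<\frac{1}{2C}$ valid for all $t$. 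With the equivalence \eqref{Equivalence} thus available uniformly in time, conservation of $\cE^s_{\tau_0}$ gives
\[
\bigl(1-\tfrac12\bigr)\,(E^s_{\tau_0}(q(t)))^2 \le \cE^s_{\tau_0}(q(t)) = \cE^s_{\tau_0}(q_0) \le \bigl(1+\tfrac12\bigr)\,(E^s_{\tau_0}(q_0))^2 ,
\]
whence $(E^s_{\tau_0}(q(t)))^2 \le 3\,(E^s_{\tau_0}(q_0))^2$ and therefore $E^s_{\tau_0}(q(t)) \le 2\,E^s_{\tau_0}(q_0)$, which is \eqref{ConsEnergy}. This uniform-in-time control of $E^s_{\tau_0}$, and hence of the $X^s$ size of the solution, rules out finite-time blow-up and upgrades the local solutions of Theorem \ref{thm:KL} to global ones for every $s\ge 0$.
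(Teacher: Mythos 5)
Your proposal follows essentially the same route as the paper: define $\cE^s_{\tau_0}$ by integrating $\Re\ln \Tc^{-1}$ along the imaginary spectral axis $\lambda=i\sqrt{\tau^2/4-1}$ against the weight $\frac{2}{\pi}\sin(\pi(s-1))(\tau^2-\tau_0^2)^{s-1}\tau^2$ (chosen, via the contour identity \eqref{eq:contour}--\eqref{Hstau0}, so that the quadratic part reproduces $(E^s_{\tau_0}(q))^2$), obtain the equivalence \eqref{Equivalence} from the quadratic-plus-cubic-error estimates \eqref{eq:energies} and \eqref{eq:ems} of Theorem \ref{thm:energies}, inherit conservation from the time-independence of the renormalized transmission coefficient (first on $1+\cS$, then by density and continuity), and close \eqref{ConsEnergy} and global well-posedness by the same bootstrap on top of Theorem \ref{thm:KL} that the paper invokes as its ``standard continuation argument.'' The one technical point you gloss over is that for $s\geq 1$ the naive weighted integral diverges (the integrand decays only like $\tau^{2s-3}$), so the paper must renormalize the definition by the $\tau^{-1}(E^1(q))^2$ subtraction and the boundary term $\tau_0^{2(s-1)}(E^1(q))^2$ as in \eqref{cEstau1}; this is a modification within your scheme rather than a different approach.
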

 
The construction of the conserved energy functionals $(\cE^s_\tau)$ is related to the complete integrability fact  (by means of the inverse scattering method) of 
the Gross-Pitaevskii equation \eqref{GP},
which can be viewed as the compatibility condition for the following two ODE systems (see  Zakharov-Shabat \cite{ZS73})
\begin{equation}\label{LaxPair0}\begin{split}
&  u_x = \left( \begin{matrix}
 -i\lambda &  q \\  \bar q & i \lambda 
 \end{matrix} \right) u ,
 \\
 & u_t=i\left(\begin{matrix}
 -2\lambda^2- (|q|^2-1) & -2i\lambda q+ \d_x q
 \\
 - 2i\lambda \ov q- \d_x \ov q & 2\lambda^2+ (|q|^2-1)
 \end{matrix}\right) u,
 \end{split}\end{equation}
 where  $u=u(t,x):\R\times\R\mapsto \C^2$ is the unknown vector-valued solutions.
 The first ODE system can be formulated as the spectral problem $Lu=\lambda u$ of the Lax operator 
\begin{equation}\label{Lax0} L=\left( \begin{matrix} i\partial_x & -i q \\ i \bar q &  -i\partial_x \end{matrix} \right). \end{equation}
If the potential  satisfies $q\in 1+\cS$, the Lax operator is self adjoint with essential spectrum $ (-\infty,-1] \cup [1,\infty) $ and finitely  many discrete eigenvalues in $(-1,1)$ (see \cite{DPVV} and Theorem \ref{thm:energies} below).
For \[  \lambda \in \C \backslash ((-\infty,-1 ]  \cup [1,\infty) ) \] 
we denote by $z$ the square root of $ \lambda^2-1$ with positive imaginary part. The map $ \C \backslash ((-\infty,-1] \cup [1,\infty)) \ni \lambda \to z $ is holomorphic. 

 The construction of the energies $(\cE^s_\tau)$ depends on the  transmission coefficient $T^{-1}(\lambda; q)$ of the Lax operator \eqref{Lax0} (see e.g. \eqref{cEstau0} below for the definition of $\cE^s_\tau$   and see    Section \ref{sec:T} below for   the definition of  $T^{-1}$), and by an abuse of notation we call $T^{-1}$, while not $T$, the transmission coefficient, and we will use the same abuse for the renomarlized transmission coefficient $T_c^{-1}$ below.
 In the classical framework $q\in 1+\cS$,  the logarithm of this transmission coefficient  is shown in \cite{FT} to have an asymptotic expansion as follows
  \begin{equation}\label{lnT:FT0}
 \begin{split}
&\ln T^{-1}(\lambda)
=i\sum_{l=0}^{k-1}  \cH_{l}(2z)^{-l-1}+(\ln T^{-1}(\lambda))_{\geq k+1},
  \quad \Im\lambda>0,
\\
&
 \hbox{ with } |(\ln T^{-1}( \lambda))_{\geq k+1}|=O(|\lambda|^{-k-1})
\hbox{ as }|\lambda|\rightarrow\infty.
 \end{split}
 \end{equation} 
Here $\cH_0$ is the mass  $\mathcal{M} $, $\cH_1$ is the momentum $ \mathcal{P}$, $\cH_2$ is the energy $\mathcal{E}$, and 
\begin{equation} \label{eq:cH3}  
\cH_3 =\Im  \int_{\R} \bigl( \d_x q\d_{xx} \bar q + 3 (|q|^2-1) q \d_x \bar q\bigr)\dx - \mathcal{P}   \end{equation}
which cannot be defined on $X^s$ for any $ s \ge 0$ (since $\cP$ cannot be defined  on $X^s$ for any $ s \ge 0$).

We proved in \cite{KL} that,  
\begin{equation}\label{eq:oldworkexp}  
X^s \ni q \to  \underline{\Tc}^{-1}(\lambda):= \exp\Big(\ln T^{-1}(\lambda)-i\mathcal{M} (2z)^{-1} -i \mathcal{P} (2z (\lambda+z))^{-1}\Big)  
\end{equation} 
defines a continuous map to holomorphic functions if $ s> \frac12$. 
 We expand its logarithm as 
\[ -i\ln\underline{\Tc}^{-1}(\lambda)
\sim   \sum_{l\geq 2}  \underline{H}_l (2z)^{-l-1}, \] 
 where $\underline{H}_{2n}=\cH_{2n}$ and
 \[ \underline{H}_{3} =\Im  \int_{\R} \bigl( \d_x q\d_{xx} \bar q + 3 (|q|^2-1) q \d_x \bar q\bigr)\dx,\]
 which is well-defined on $X^{\frac32}$.

 We prove in this paper that there is a unique extension of a renormalized transmission coefficient 
\begin{equation}   \label{eq:exp}
q \to   T_c^{-1}(\lambda) := \exp\Bigl(  \ln T^{-1}(\lambda)-i\mathcal{M} (2z)^{-1} -i  \Theta (2z(\lambda+z))^{-1} \Bigr)   \end{equation} 
to $X^0$ modulo $ \exp( 2\pi i (2z(\lambda+z))^{-1}\Z)$, or, equivalently, if we fix an element $\tilde 1$ in the fiber of $1$   and $\Theta(\tilde 1) = 0 $, there is a unique continuous extension of \eqref{eq:exp} to the universal covering space of $X^0$.  One obtains an asymptotic expansion
\[ -i \ln T_c^{-1}(\lambda) \sim \sum_{l\geq 1}  H_l (2z)^{-l-1} \] 
on the covering space, 
where $H_1$ is defined in \eqref{eq:H1}, $H_2= \cE$ and $H_{2n}=\underline{H}_{2n}=\cH_{2n}$. Justification of the asymptotic expansion follows from a combination of the techniques in \cite{KL} and in this paper. We do not pursue this here to avoid a distraction by sidelines. 
\begin{thm}[Renormalized transmission coefficient]\label{thm:energies} 
Suppose that $q\in X^0$. Then 
the renormalized transmission coefficient of the Lax operator \eqref{Lax0}: $\lambda \to T_c^{-1}(\lambda)$  is a holomorphic  function on $ \C \backslash ((-\infty, -1] \cup [1,\infty) ) $. The zeroes  are simple and contained  in $(-1,1)$, which
  coincide with the eigenvalues of the Lax operator. 
  
 There exists a constant $C\geq 2$ (as in Theorem \ref{thm:GP}), such that 
 \begin{itemize}
     \item if  $q\in X^s$
 satisfies the smallness condition $\frac{1}{\sqrt\tau } E^0_{\tau} (q)\le \frac{1}{2C} $ for $\tau:=2\Im z\geq 2$, then 
  \begin{equation} \label{eq:energies} \begin{split}
  &\hspace{-.5cm} \Bigl| \frac12\Big( \ln \Tc^{-1}(\lambda) + \ln \overline{\Tc^{-1}(-\bar \lambda)}\Big)
  \\
  &+  \frac i{2z} \int_{\R} \frac1{\xi^2-4z^2}    \Big(     |\widehat{ \d_x q} |^2(\xi) + | (\widehat{|q|^2-1})|^2(\xi)\Big) d\xi\Bigr|
  \leq C(\frac{1}{\sqrt\tau }E^0_\tau(q))^3,
\end{split}\end{equation} 

\item  if $q\in X^s$,  $s \in (\frac12, \frac32)$ never vanishes and satisfies    $\tau_0^{-1-\varepsilon} E_{\tau_0}^{\frac12+\varepsilon}(q) \le\frac{1}{2C}$ for some $ \varepsilon\in (0,\frac12)$ and $\tau_0\geq 2$, then for all $\tau\geq\tau_0$,
\begin{equation} \label{eq:highmomentae}   
\begin{split} \hspace{1cm}& \hspace{-1cm} 
\Bigl| \frac1{2i}  \Big(\ln \Tc^{-1}(\lambda) -\ln  \overline{\Tc^{-1}(-\bar \lambda)} \Big) 
 -\frac{\lambda}{4z^3} H_1 
-\frac {\lambda}{4z^3} \int_{\R} \frac{\xi} {\xi^2-4z^2}        |\widehat{ \d_x q} |^2 d\xi \Bigr|
\\ & \leq C  \tau^{-1-2s} (\tau_0^{-1-\varepsilon} E^{\frac12+\varepsilon}_{\tau_0}  (q)) (E^{s}_{\tau_0} (q))^2,
\end{split} 
\end{equation}  
where $H_1$ is given in \eqref{H1,nonzero} in Theorem \ref{thm:E1}. 

 \end{itemize}
\end{thm}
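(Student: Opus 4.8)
The plan is to prove everything first for $q\in 1+\cS$ and then to pass to the limit using the density and continuity statements already at hand. By Theorem \ref{thm:Xs} the set $1+\cS$ is dense in $X^s$, and on $1+\cS$ the coefficient $T^{-1}(\lambda)$ is the classical object of \cite{FT}; by \eqref{eq:exp} the renormalized coefficient satisfies $T_c^{-1}(\lambda)=T^{-1}(\lambda)\exp\bigl(-i\cM(2z)^{-1}-i\Theta(2z(\lambda+z))^{-1}\bigr)$, so it differs from $T^{-1}$ only by a nowhere-vanishing holomorphic factor. Consequently holomorphicity of $T_c^{-1}$ on $\C\setminus((-\infty,-1]\cup[1,\infty))$, together with the location and simplicity of its zeros, is inherited from $T^{-1}$: the zeros coincide with the discrete eigenvalues of the self-adjoint Lax operator \eqref{Lax0} in $(-1,1)$, simplicity following from the standard Wronskian argument for the Jost solutions. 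To reach all of $X^0$ I would show that, under the smallness hypothesis and after the $\Theta$-renormalization—which by Theorem \ref{thm:E1} is continuous on $X^0$ and lifts to the universal cover—the map $q\mapsto T_c^{-1}(\lambda)$ is continuous from $X^0$ into holomorphic functions, so that the identities and estimates extend from $1+\cS$ by density.

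The analytic heart is a multilinear expansion of $\ln T_c^{-1}(\lambda)$ whose $n$-th term is $n$-linear in the gauge-invariant quantities $|q|^2-1$ and $\partial_x q$. I would obtain it from the Born/Neumann series for the perturbed Lax resolvent, writing \eqref{Lax0} as $L=L_0+V$ with $L_0$ the constant-background operator ($q\equiv1$, spectrum $(-\infty,-1]\cup[1,\infty)$); since $q-1$ itself need not lie in $L^2$, the expansion must be organized so that only the controlled quantities $|q|^2-1$ and $\partial_x q$ appear, after removing the non-$L^2$ background phase. The lowest-order term produces the contributions that do not decay in $\lambda$, and these are precisely cancelled by subtracting $\cM(2z)^{-1}$ and $\Theta(2z(\lambda+z))^{-1}$ in \eqref{eq:exp}; the second-order term, in Fourier variables, yields the explicit quadratic multipliers $\frac{1}{\xi^2-4z^2}$ occurring in \eqref{eq:energies} and \eqref{eq:highmomentae}. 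The decisive structural input is the symmetry of $T_c^{-1}$ under $\lambda\mapsto-\bar\lambda$ composed with complex conjugation, which splits the expansion into an even (symmetric) part carrying $|\widehat{\partial_x q}|^2+|\widehat{|q|^2-1}|^2$ and the even energies, and an odd (antisymmetric) part carrying $H_1$ and the weighted integral $\int\frac{\xi}{\xi^2-4z^2}|\widehat{\partial_x q}|^2\,d\xi$.

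For the symmetric estimate \eqref{eq:energies} the linear term contributes nothing, the quadratic term is exactly the displayed Fourier integral, and the tail $\sum_{n\ge3}$ is handled by multilinear bounds in terms of $E^0_\tau$. The point is that each resolvent factor carries a gain measured by $\tau=2\Im z$, so every additional power of $V$ costs a factor $\tfrac{1}{\sqrt\tau}E^0_\tau$; under $\tfrac{1}{\sqrt\tau}E^0_\tau\le\tfrac{1}{2C}$ the series converges geometrically and the remainder is of size $C(\tfrac{1}{\sqrt\tau}E^0_\tau)^3$, which is \eqref{eq:energies}. The multipliers $\frac{1}{\xi^2-4z^2}$ are uniformly controlled away from the cut since $\Im z>0$ keeps $4z^2$ off the positive real axis.

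The antisymmetric estimate \eqref{eq:highmomentae} is where I expect the main obstacle, and where the threshold $s>\tfrac12$ enters. Now the low-order terms survive the symmetrization: after the $\Theta$-renormalization the momentum contribution $\cP$ contained in $\ln T^{-1}$ combines with the subtracted $\Theta$ to produce exactly $\frac{\lambda}{4z^3}H_1$ with $H_1=\cP-\Theta$ as in \eqref{P,H1} and \eqref{H1,nonzero}; making this identification rigorous requires the regularization $\tilde q$ and the covering-space bookkeeping of Theorem \ref{thm:E1}, since $H_1$ is only defined modulo $2\pi\Z$ and $\cP$ alone is undefined below $s=\tfrac12$. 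The quadratic term gives the second displayed integral, and the remainder must then be bounded by $C\tau^{-1-2s}(\tau_0^{-1-\varepsilon}E^{\frac12+\varepsilon}_{\tau_0}(q))(E^s_{\tau_0}(q))^2$. The hard part is precisely this trilinear-type remainder estimate: one must extract a single factor of the small low-regularity norm $\tau_0^{-1-\varepsilon}E^{\frac12+\varepsilon}_{\tau_0}$ while paying the other two factors in the stronger norm $E^s_{\tau_0}$ and simultaneously retaining the sharp decay $\tau^{-1-2s}$ in the spectral parameter. This demands a frequency-localized analysis of the multilinear multipliers combined with the extension and continuity properties of $H_1$ from Theorem \ref{thm:E1}, and it is the step I expect to consume most of the work.
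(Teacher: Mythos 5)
Your proposal contains the right high-level scaffolding (quadratic term in Fourier variables, symmetrization in $\lambda\mapsto-\bar\lambda$, the identification $H_1=\cP-\Theta$ on the covering space), but it has a genuine gap at its analytic core. You propose a Born/Neumann series for $L=L_0+V$ around the constant background $q\equiv 1$, and you yourself flag the problem: $V$ is built from $q-1$, which is not in $L^2$ (nor controlled by the $X^0$ metric in any way), because of the possible winding of the phase at infinity (e.g.\ $e^{i\ln(1+|x|)}\in X^1$). The promise that ``the expansion must be organized so that only $|q|^2-1$ and $\partial_x q$ appear'' is not a deferred detail — it is the main difficulty of the theorem, and no resummation of a series whose individual terms are multilinear in $q-1$ is available that produces terms bounded by $E^0_\tau(q)$ alone. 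The paper's mechanism is not a perturbation about $q=1$ but a $q$-dependent approximate diagonalization: one conjugates the Lax system by the matrix built from the regularization $r=\tau^2D_\tau^{-2}q$ and $\zeta=\lambda+z$ (see \eqref{eq:approxdiag}), which yields coefficients $q_1,\dots,q_4$ in \eqref{eq:q1234} that are honestly $L^2$-bounded by $E^0_\tau(q)$ (Lemmas \ref{lem:regular} and \ref{lem:qj}); the expansion $e^{\int_{\R} q_1\,dx}\,T^{-1}=\sum_n T_{2n}$ then comes from Picard iteration of the conjugated ODE, and all estimates (Corollary \ref{coro:T2n}, Propositions \ref{prop:quadratic} and \ref{prop:T2,Phi}) are carried out on $T_2+\Phi$ with errors measured in $W^{-1,p}_\tau$ norms of $(|q|^2-1,\partial_xq)$. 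Without this (or an equivalent) device, your series neither converges in the relevant regime nor has terms expressible through the controlled quantities; this also undercuts your density strategy, which is otherwise circular — extending $T_c^{-1}$ and the estimates from $1+\cS$ to $X^0$ requires precisely the uniform, $E^0_\tau$-only bounds and the continuity in $q$ that the controlled expansion is designed to provide.

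The spectral assertions also do not follow by density as you claim. Simplicity of zeros and the zero/eigenvalue correspondence are not stable under limits: two simple zeros of approximating transmission coefficients can merge, and the spectral theory of $L$ for a general $q\in X^0$ is itself not known a priori, so there is nothing for the limiting zeros to ``coincide with.'' The paper proves these statements directly on $X^0$: using the non-vanishing regularization $\tilde q$ of Lemma \ref{lem:tildeq}, it conjugates $L$ by $\left(\begin{smallmatrix}\overline{\sqrt{\tilde q}} & 0\\ 0 & \sqrt{\tilde q}\end{smallmatrix}\right)$, exhibiting the result as the free operator plus a matrix with $L^2$ entries, hence a compact perturbation viewed as an operator $H^{1/2}\to H^{-1/2}$; this pins down the essential spectrum $(-\infty,-1]\cup[1,\infty)$, self-adjointness gives algebraic $=$ geometric multiplicity $=1$ for eigenvalues in $(-1,1)$, and the Jost-function construction identifies eigenvalues with simple zeros of $T_c^{-1}$. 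Your Wronskian argument is fine on $1+\cS$, but a separate argument of this kind is needed for the limit objects; the same remark applies to removing the smallness condition and to holomorphy across $\lambda\in(-1,1)$, which the paper handles by solving the conjugated system on $(-\infty,-R]$, $[-R,R]$, $[R,\infty)$ separately.
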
 
\begin{rmk} We will also obtain a more detailed expression for the odd part:
We  take the function $r= \tau^2 D_\tau^{-2} q\in X^2 $ and a non-vanishing function $\tilde q\neq 0$ with $ \tilde q -q \in L^2$, such that
if $\frac{1}{\sqrt\tau } E^0_{\tau} (q)\le \frac{1}{2C} $ for $\tau=2\Im z\geq 2$ and $\Im\lambda>0$, then
 \begin{equation} \label{eq:lowmomentae}
\begin{split} &
 \Bigl|\frac1{2i} \Big(\ln \Tc^{-1}(\lambda) -\ln  \overline{\Tc^{-1}(-\bar \lambda)} \Big)  
 -\frac1{4z^2}  \int_{\R} \frac{\xi(\tau^4+4z^2(2\tau^2+\xi^2))}{(\tau^2+\xi^2)^2(\xi^2-4z^2)}  |\widehat{\d_x q}|^2 d\xi  
 \\
 &
 + \frac{\lambda-z}{2z} \Im  \int_{\R} \bigl( \bar r\d_x r - \partial_x \log \tilde q\bigr) dx  +\frac{(\lambda-z)^2}{4z^2}\Im\int_{\R} (|r|^2-1)(\bar r\d_xr)\dx  
\\ &  
- \frac{ \lambda-z}{4z^2} \int_{x<y} e^{2iz(y-x)} \Big( (|q|^2-1)(x) \Im [ r\d_x\bar r  ] (y)
\\
&\qquad\qquad\qquad + \Im [r\d_x\bar r ](x) (|q|^2-1)(y)\Bigr) dx dy
\\ &  - \frac{\lambda-z}{2z} \int_{x<y} e^{2iz(y-x)} \Big((|q(x)|^2-1)\Im [ \bar r(q-r)](y) \\
&\qquad\qquad\qquad+ \Im [   r(\overline{q-r})](x) (|q(y)|^2-1)\Big)   dx dy\Bigr| 
  \leq C(\frac{1}{\sqrt\tau }E^0_\tau(q))^3.   
\end{split} 
\end{equation} 
If $s\in [0,\frac74]$ and the smallness condition $\frac{1}{\sqrt{\tau_0} } E^0_{\tau_0} (q)\le \frac{1}{2C} $ holds for some fixed  $\tau_0\geq 2$, then for all $\tau\geq\tau_0$, 
\begin{equation}\label{eq:ems}
\hbox{Lefthandsides of \eqref{eq:energies} and \eqref{eq:lowmomentae}}
\leq C\tau^{-\frac32-2s}   E^0_{\tau_0} (q)   (E^s_{\tau_0}  (q))^2.
\end{equation}
\end{rmk} 

\begin{cor}[Conserved energy and momentum sequences] \label{cor:hamiltonian} 
There exist a sequence of conserved energies $H_{2n}$, $n\geq 1$ with 
\begin{equation}\label{eq:energiesweak}    \Big| H_{2n} - (E^n(q))^2 \Big| 
\le c(E^0(q))\, E^0(q) (E^n(q))^2, \end{equation}   
and a sequence of conserved Hamiltonians $ \tilde H_{2n+1}$, $n\geq 1$ with 
\begin{equation}\label{eq:momentum}
\Big|\tilde H_{2n+1} - \Im \int_{\R}  q^{(n)} \bar q^{(n+1)}\dx  \Big| 
\le c(\varepsilon, E^{\frac12+\varepsilon}(q))\, E^{\frac12+\varepsilon}(q) (E^{n+1/2}(q))^2. \end{equation} 
\end{cor}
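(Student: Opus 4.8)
\emph{Conservation and the generating expansion.} The plan is to read off both sequences from the large-$\lambda$ asymptotics of the conserved renormalized transmission coefficient $\Tc^{-1}(\lambda)$ of Theorem \ref{thm:energies}. First I would record that, for each fixed $\lambda\in\C\backslash((-\infty,-1]\cup[1,\infty))$, the quantity $\ln\Tc^{-1}(\lambda)$ is conserved along the flow: on the dense set $1+\cS$ (Theorem \ref{thm:Xs}) this is the classical isospectrality of the Lax operator \eqref{Lax0} under \eqref{LaxPair0}, and it propagates to all of $X^0$ by the continuity of $q\mapsto\Tc^{-1}(\lambda)$ (Theorem \ref{thm:energies}) together with the global well-posedness and continuous dependence of Theorem \ref{thm:GP} (on the universal covering space for the odd part, where $\ln\Tc^{-1}$ and $H_1$ are defined only modulo $2\pi\Z$). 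Hence every coefficient in $-i\ln\Tc^{-1}(\lambda)\sim\sum_{l\geq 1}H_l(2z)^{-l-1}$, with even coefficients $H_{2n}=\cH_{2n}$, is conserved, and since $z(-\bar\lambda)=-\bar z$ the even and odd parts separate the two sub-sequences:
\begin{gather*}
\tfrac12\bigl(\ln\Tc^{-1}(\lambda)+\ov{\ln\Tc^{-1}(-\bar\lambda)}\bigr)=i\sum_{n\geq 1}\cH_{2n}(2z)^{-2n-1}, \\
\tfrac1{2i}\bigl(\ln\Tc^{-1}(\lambda)-\ov{\ln\Tc^{-1}(-\bar\lambda)}\bigr)=\sum_{n\geq 0}H_{2n+1}(2z)^{-2n-2}.
\end{gather*}

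\emph{The even energies.} For \eqref{eq:energiesweak} I would use the even part. By \eqref{eq:energies} it agrees, up to a cubic error, with $-\tfrac i{2z}\int_{\R}(\xi^2-4z^2)^{-1}\bigl(|\widehat{\d_x q}|^2+|\widehat{|q|^2-1}|^2\bigr)\dxi$, and the geometric expansion $(\xi^2-4z^2)^{-1}=-\sum_{k\geq 0}\xi^{2k}(2z)^{-2k-2}$ shows that the quadratic leading part of $\cH_{2n}$ is $\int_{\R}\xi^{2n-2}\bigl(|\widehat{\d_x q}|^2+|\widehat{|q|^2-1}|^2\bigr)\dxi$. Since $(E^n(q))^2=\int_{\R}(4+\xi^2)^{n-1}\bigl(|\widehat{\d_x q}|^2+|\widehat{|q|^2-1}|^2\bigr)\dxi$ and $(4+\xi^2)^{n-1}=\sum_{m=0}^{n-1}\binom{n-1}{m}4^{\,n-1-m}\xi^{2m}$, I would \emph{define} the conserved energy $H_{2n}$ of the corollary as the matching combination $\sum_{m=1}^{n}\binom{n-1}{m-1}4^{\,n-m}\cH_{2m}$ of the conserved coefficients $\cH_2,\dots,\cH_{2n}$; its leading part is then exactly $(E^n(q))^2$. (Equivalently, for small data one may simply take $H_{2n}=\cE^n_2$, whose equivalence \eqref{Equivalence} at $\tau=2$ already yields \eqref{eq:energiesweak}.) The estimate \eqref{eq:energiesweak} then reduces to bounding each difference between a coefficient and its quadratic leading part by $cE^0(q)(E^n(q))^2$, which comes from the cubic remainder in \eqref{eq:energies} after extraction, using the $\tau$-decay \eqref{eq:ems}, and summing the finitely many binomial contributions. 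The smallness $\tfrac1{\sqrt\tau}E^0_\tau(q)\le\tfrac1{2C}$ needed for \eqref{eq:energies} holds for all large $\tau$ because $E^0_\tau(q)\to 0$ as $\tau\to\infty$, which is exactly the regime in which the coefficients are read off.

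\emph{The odd momenta.} For \eqref{eq:momentum} I would use the odd part, which by \eqref{eq:highmomentae} (for non-vanishing $q$, the general case being reached through the regularisation $\tilde q$, the covering space of Theorem \ref{thm:E1} and the detailed form \eqref{eq:lowmomentae}) agrees up to a cubic error with $\tfrac{\lambda}{4z^3}H_1+\tfrac{\lambda}{4z^3}\int_{\R}\tfrac{\xi}{\xi^2-4z^2}|\widehat{\d_x q}|^2\dxi$. Expanding $\tfrac{\lambda}{4z^3}=\sum_{j\geq 0}a_j(2z)^{-2-2j}$ with $a_0=1$ and $\tfrac{\xi}{\xi^2-4z^2}=-\sum_{k\geq 0}\xi^{2k+1}(2z)^{-2k-2}$, the coefficient $H_{2n+1}$ of $(2z)^{-2n-2}$ has leading part $-\int_{\R}\xi^{2n+1}|\hat q|^2\dxi=\Im\int_{\R}q^{(n)}\bar q^{(n+1)}\dx$ together with lower-order quadratic contributions proportional to $H_1$ and to $\Im\int_{\R}q^{(k)}\bar q^{(k+1)}\dx$ with $k<n$. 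I would \emph{define} $\tilde H_{2n+1}$ by subtracting from $H_{2n+1}$ the conserved lower Hamiltonians $H_1,\tilde H_3,\dots,\tilde H_{2n-1}$ with these universal coefficients, so that its leading part is precisely $\Im\int_{\R}q^{(n)}\bar q^{(n+1)}\dx$; it is then conserved, and \eqref{eq:momentum} follows from the cubic remainder in \eqref{eq:highmomentae}, whose gain is now a factor $E^{\frac12+\varepsilon}(q)$.

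\emph{Main obstacle.} The principal difficulty is the passage from the \emph{approximate} identities of Theorem \ref{thm:energies}, valid only up to cubic remainders, to sharp bounds on the \emph{individual} asymptotic coefficients. I would realise the extraction by a contour integral in the spectral variable (equivalently, repeated differentiation along the imaginary axis $z=i\tau/2$, on which $\xi^2-4z^2=\xi^2+\tau^2$), and control the remainder after extraction by \eqref{eq:ems} evaluated at the regularity matching the extracted power, namely $s=n-\tfrac14$ for $H_{2n}$ and $s=n+\tfrac12$ for $\tilde H_{2n+1}$. Since these bounds are stated only for $s\le\tfrac74$ resp. $s\in(\tfrac12,\tfrac32)$, they cover the lowest $n$ directly, while for larger $n$ one must establish the higher-regularity analogues of the remainder estimates (or iterate); this, together with the finite combinatorial bookkeeping of the subtractions and the care required because the odd part and $H_1$ live only modulo $2\pi\Z$ on the universal covering space, is where the real work lies. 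By contrast the conservation statements are comparatively soft, resting on isospectrality and density.
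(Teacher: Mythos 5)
Your proposal is correct and follows essentially the same route as the paper: both read the two conserved sequences off the even and odd parts of the conserved $\ln\Tc^{-1}(\lambda)$, identify the quadratic leading terms via \eqref{eq:energies}, \eqref{eq:ems} and \eqref{eq:highmomentae}, and defer the higher-order remainder estimates needed for large $n$ to the expansion techniques of \cite{KL}, exactly as the paper does. The only differences are bookkeeping: the paper obtains \eqref{eq:energiesweak} as the integer-$s$ special case of \eqref{Equivalence} extended to $s\geq 2$ (your parenthetical $H_{2n}=\cE^n_2$ remark is precisely this route) rather than via your binomial combination of the $\cH_{2m}$, which is equivalent, and it defines $\tilde H_{2n+1}$ by factoring the prefactor $\lambda/z$ out of the odd series, which performs automatically the recursive subtraction of lower Hamiltonians that you carry out by hand.
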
 

 \begin{rmk}
The conserved energy $\cE^s_\tau(q)$ given in Theorem \ref{thm:GP}  here is indeed the same as the conserved energy given in \cite{KL}, which is defined in terms of the real part of the renormalized transmission coefficient located on the imaginary axis (see e.g. \eqref{cEstau0} below): Indeed, if we take the purely imaginary points $(\lambda, z)=(i\sqrt{\tau^2/4-1}, i\tau/2)$, $\tau\geq 2$, then the real parts of the renormalized transmission coefficients in \cite{KL} (see \eqref{eq:oldworkexp}) and here (see \eqref{eq:exp}) coincide.   
\end{rmk}

 \subsection{Ideas of the proofs}  
 \label{subs:Ideas} 
 In order to consider the low regularity case $q\in X^0$,  we first perform a regularisation procedure: We  define the regularisation  $r = \tau^2( \tau^2- \partial_x^2)^{-1}q = \tau^2 D_\tau^{-2}q\in X^2$ for   $q\in X^0$ and $\tau \ge 2$. 
If $ E_\tau^0 (q) \le \tau^{1/2} $ then (see Lemma \ref{lem:regular} below) 
\begin{equation}\label{r,intro}
\|r\|_{L^\infty} \le c \tau, \quad 
\quad\|q-r\|_{L^2}+ \tau^{-1}  \||r|^2-1\|_{L^2}  + \tau^{-1} \|\d_x r\|_{L^2}\le cE^0_\tau(q).
\end{equation}
This regularisation $r$ will play an important role in the proofs.

We will prove Theorem \ref{thm:GP} by use of Theorem \ref{thm:energies}, and mention the proof of Corollary \ref{cor:hamiltonian}  below.
We will also sketch below the proof ideas of Theorem   \ref{thm:homotopy} and Theorem \ref{thm:E1} (whose detailed proofs can be found in Section \ref{subs:Theta}), and of Theorem \ref{thm:energies} (whose detailed proofs can be found in Section \ref{sec:T}).  

 \subsubsection{Idea  of the proof  of Theorem \ref{thm:homotopy}  }  
 
 The construction for the deformation in Theorem \ref{thm:homotopy} is as follows: First we define a deformation to the set of functions which do not have zeros outside an interval $[-R_0,R_0]$.  This is followed by a second essentially linear deformation to the set $ Q$, basically by unwinding the rotation near infinity. Implementing these ideas is more delicate than this (simple) description: Because of the low regularity assumption $q\in X^0$, we have to resort to its regularisation \eqref{r,intro}.
  
 \subsubsection{Idea  of the proof of Theorem \ref{thm:E1}}  
The crucial point in the proof of Theorem \ref{thm:E1} is a further regularisation map $q\mapsto \tilde q$, with the regularisation $\tilde q$ having on zeros.
More precisely, given $q_0\in X^s$, $s\geq 0$, we will construct (see Lemma \ref{lem:tildeq} below)  a  continuous map defined on a small ball $B_\delta^{X^0}(q_0)$ in $X^0$:
 \[  B_\delta^{X^0}(q_0) \ni q \to \tilde q \in X^{s+2} \] 
 so that $ |\tilde q|=1$, $ q-\tilde q \in H^s$.
 Furthermore, with the assumption $\d_x q\in L^1$, we have $ \partial_x \tilde q\in L^1$, and   we  can then define 
\[ \Theta (q)= -\Im \int_{\R} \partial_x \ln \tilde q \dx\in \R / (2\pi\Z).  \] 
Since $X^s$ is homotopy equivalent to $\mathbb{S}^1$, $ \Theta $ can be lifted to a map from the universal covering space to $ \R$.

Direct estimates show then that 
\[ H_1(q) = \Im \int_{\R} \Bigl( ( q- \tilde q ) \partial_x \bar q - \overline{\big( q- \tilde q \big)} \partial_x \tilde q\Bigr) \dx\]  
can be defined on the universal covering space of $X^{\frac12}$, and  it is unique up to the addition of multiplies of $ 2\pi $. 
 
 \subsubsection{Proof of Theorem \ref{thm:GP} by virtue of Theorem \ref{thm:KL} and Theorem \ref{thm:energies}} 
 Thanks to the local-in-time well-posedness result in Theorem \ref{thm:KL},   the equivalence relation \eqref{Equivalence} between the conserved energy $\cE^s_\tau(q)$ and the square of the energy norm $(E^s_\tau(q))^2$  implies immediately the (almost) conservation of the energy \eqref{ConsEnergy} and then the global-in-time well-posedness  in Theorem \ref{thm:GP},  
 by a standard continuation argument (as e.g.  the end of   \cite[Section 1]{KL}), which is omitted here.

It remains to prove the equivalent relation \eqref{Equivalence} and the conservation of $\cE^s_\tau(q)$.
We are going to construct the energy functionals $(\cE^s_\tau(q))_{\tau\geq 2}$ in terms of the time-independent renormalized transmission coefficient, and then show   that \eqref{Equivalence}   is a  consequence of the more precise statements in Theorem \ref{thm:energies}, in the low regularity regime  $s\in [0,2)$. 
   In this paper we focus on the case $ s<2$ in order to avoid a  more technical  presentation. 
   \\
\noindent\textbf{Case $s\in [0,1)$.}
  We first consider the following integral for $\tilde s\in (-1,0)$  and $\tau_0>0$,
 \begin{equation*}  \begin{split} 
 \hspace{1cm} & \hspace{-1cm}  -\frac2\pi \sin (\pi \tilde  s) \int_{\tau_0}^\infty (\tau^2-\tau_0^2)^{\tilde  s}  \tau (\tau^2+\xi^2)^{-1} d\tau ,
\end{split} 
 \end{equation*}  
 which reads as, by using the standard branches of the logarithm and fractional powers   in the cut domain  $ \C \backslash (-\infty,0]$, 
 \begin{equation*}  \begin{split} 
&  \lim_{\varepsilon \downarrow  0 } 
 \Big[ \frac1{\pi i}  \int_{\tau_0}^\infty  (( i\tau- \varepsilon )^2 + \tau^2_0)^{\tilde  s} i\tau (\xi^2+ \tau^2)^{-1}  i d\tau
\\ & \hspace{2cm}   + \frac1{\pi i}    \int_{\tau_0}^\infty ((i\tau +\varepsilon )^2 + \tau^2)^{\tilde  s} i\tau  (\xi^2+ \tau^2)^{-1} i d\tau \Big].
\end{split} 
 \end{equation*}  
Let  $ \gamma$ be the path from $ i \infty -0 $ to $i\tau_0$ and then to $ i\infty+0$,
then we derive
 \begin{equation}\label{eq:contour}  \begin{split} 
 \hspace{1cm} & \hspace{-1cm}  -\frac2\pi \sin (\pi \tilde  s) \int_{\tau_0}^\infty (\tau^2-\tau_0^2)^{\tilde  s}  \tau (\tau^2+\xi^2)^{-1} d\tau  
\\ &  = \frac1{\pi  i } \int_{\gamma}   (z^2+\tau_0^2)^{\tilde  s} z ( \xi^2 -z^2)^{-1} dz    = (\tau_0^2 + \xi^2)^{\tilde  s},\quad \tilde s\in (-1,0),
\end{split} 
 \end{equation}   
where we moved the contour of integration to the real axis for the last equality where only 
  residues contribute since the integrand is odd.  Thus, for $-1<\tilde s<0$  ,
    \begin{equation}\label{Hstau0}\begin{split}   \Vert f \Vert_{H^{\tilde s}_{\tau_0}}^2 
  \, &  = \int_{\R} |\hat f(\xi)|^2 (\tau_0^2 + \xi^2)^{\tilde s} d\xi 
\\ &  = - \frac2\pi \sin(\pi\tilde  s) \int_{\tau_0}^\infty \int_{\R} (\tau^2-\tau_0^2)^{\tilde s} \tau ( \tau^2+ \xi^2)^{-1} |\hat f(\xi)|^2 d\xi d\tau 
 \\ & = - \frac2\pi \sin(\pi\tilde  s) \int_{\tau_0}^\infty (\tau^2- \tau_0^2)^{\tilde s} \tau \Vert f \Vert_{H^{-1}_\tau}^2 d\tau.
\end{split} 
 \end{equation}
 Observe that the limit of the right hand side is $ \Vert f \Vert_{H^{-1}_{\tau_0}}^2$ as $\tilde  s \to -1$. 
 
 For $s\in (0,1)$ and $\tau_0\geq 2$, we define our conserved energies in terms of the real part of the renormalized transmission coefficient given by Theorem \ref{thm:energies} on the imaginary axis as
\begin{equation}\label{cEstau0}
\cE^s_{\tau_0}(q)=\frac2\pi\sin(\pi(s-1))\int^\infty_{\tau_0} (\tau^2-{\tau_0}^2)^{s-1}
\tau^2 \Re \ln\Tc^{-1}\Bigl (i\sqrt{\frac{\tau^2}{4}-1}\Bigr)   d\tau ,
\end{equation}
and for $s=0$, we define
\begin{equation}\label{cEstau00}
\cE^0_{\tau}(q)=-\tau\Re \ln\Tc^{-1}\Bigl (i\sqrt{\frac{{\tau}^2}{4}-1}\Bigr).
\end{equation}

We derive \eqref{Equivalence} for $s=0$ straightforward from the estimate \eqref{eq:energies} with $(\lambda, z)=(i\sqrt{\tau^2/4-1}, i\tau/2)$ and $  \tau \ge \max\{2, 2C (E_\tau^0(q))^2 \} $ in  Theorem \ref{thm:energies}:  
 \begin{equation}   \label{Estau0}  \left| -\cE^0_\tau(q) +   (E^0_\tau(q))^2 \right| \le C \frac{E^0_\tau(q)}{\sqrt{\tau}}  \big( E^0_\tau(q) \big)^2. \end{equation} 
 Similarly, for  $ 0 < s < 1$, and $  \tau_0 \ge \max\{2,2 C (E_{\tau_0}^0(q))^2 \} $ (such that $  \tau \ge \max\{2, 2C (E_\tau^0(q))^2 \} $ holds for all $\tau\geq \tau_0$ trivially), we derive from   \eqref{eq:energies}, \eqref{Hstau0} 
 and the trivial estimates  $ E^0_\tau(q) \le \tau^{-s} E^s_{\tau}(q)$, $ E^0_\tau(q) \le E^0_{\tau_0}(q)$ and $E^s_\tau(q)\leq E^s_{\tau_0}(q)$, that
\[\begin{split} 
 &\Big| -\cE^s_{\tau_0}(q)+ (E^s_{\tau_0} (q))^2   \Big|
 \\
& \leq -\frac2\pi\sin(\pi(s-1))\int^\infty_{\tau_0} (\tau^2-{\tau_0}^2)^{s-1}
\Bigl| \tau^2 \Re \ln\Tc^{-1}\Bigl (i\sqrt{\frac{\tau^2}{4}-1}\Bigr)
+\tau (E^0_\tau(q))^2\Bigr| d\tau
\\&\lesssim  -\frac2\pi\sin(\pi(s-1))\int^\infty_{\tau_0} (\tau^2-{\tau_0}^2)^{s-1} \tau^{\frac12-2s}   d\tau E_{\tau_0}^0(q) (E_{\tau_0}^{s}(q))^2 
\\ & \lesssim   \frac{E^0_{\tau_0}(q)}{\sqrt{ \tau_{0} }}  (E^s_{\tau_0}(q))^{2} . 
\end{split}\]  

 \smallbreak
 
\noindent\textbf{Case $s\in [1,2)$.}
If $s=1$, then $\cE^1_\tau(q)$ is simply the conserved energy \eqref{cEGL}:
 $$
 \cE^1_\tau(q)=(E^1_\tau(q))^2=(E^1(q))^2=\cE=\|(|q|^2-1, \d_x q)\|_{L^2}^2=H_2(q).
 $$
The same calculation as for \eqref{eq:contour} gives for $ 0\le \tilde s<1$
 \begin{equation}\label{eq:contour,larges}   -  \frac2\pi \sin (\pi \tilde s) \int_{\tau_0}^\infty (\tau^2-\tau_0^2)^{\tilde s}  \tau \big[(\tau^2+\xi^2)^{-1}-\tau^{-2}\big]  d\tau 
 +{\tau_0}^{2\tilde s}
 = (\tau_0^2 + \xi^2)^{\tilde s},
 \end{equation}
 and hence for $s\in (1,2)$,
 \begin{equation}\label{Estau0,larges}\begin{split}
     (E^s_{\tau_0}(q))^2
     &= -\frac{2}{\pi}\sin (\pi(s-1))\int^\infty_{\tau_0} (\tau^2-{\tau_0}^2)^{s-1}  \bigl[\tau(E^0_\tau(q))^2 
     \\
&\quad\qquad\qquad\qquad\qquad
-\tau^{-1}(E^1(q))^2\bigr] d\tau    +{\tau_0}^{2(s-1)}(E^1(q))^2.
 \end{split}\end{equation}
Therefore, for $s\in (1,2)$,  we define 
\begin{equation}\label{cEstau1}\begin{split} \hspace{.5cm} & \hspace{-1cm} 
\cE^s_{\tau_0}(q)=\frac2\pi\sin(\pi(s-1))\int^\infty_{\tau_0} (\tau^2-{\tau_0}^2)^{s-1}
\\ &  \times \left( 
\tau^2 \Re \ln\Tc^{-1}\Bigl (i\sqrt{\frac{\tau^2}{4}-1}\Bigr) +  \tau^{-1}   (E^1(q) )^2 \right)   d\tau +{\tau_0}^{2(s-1)}(E^1(q))^2.
\end{split} 
\end{equation} 
The difference  $\mathcal{E}^s_{\tau_0}(q)- (E^s_{\tau_0}(q))^2$ is given  by 
\[ \frac{2}{\pi} \sin( \pi (s-1)) \int_{\tau_0}^\infty (\tau^2-\tau_0^2)^{s-1} \Big( \tau^2 \Re \ln T_c^{-1}\Bigl (i\sqrt{\frac{\tau^2}{4}-1}\Bigr) + \tau (E_\tau^0(q))^2\Big)  d\tau,  \] 
and we estimate the integrand using \eqref{eq:energies} and indeed \eqref{eq:ems} by 
\[  (\tau^2-\tau_0^2)^{s-1}  \tau^{\frac12-2\sigma} E^0_{\tau_0}(q) (E^{\sigma}_{\tau_0}(q))^2 \hbox{ for some }\sigma\in [1,\frac74].      \] 
Integration yields the searched  estimate \eqref{Equivalence} for $s=\sigma \in (1,\frac74]$. If $ \frac74 \le s < 2$ we take $\sigma=\frac74$ above 
\[ ( \tau^2-\tau_0^2)^{s-1} \tau^{-3} E^0_{\tau_0}(q) (E^{\frac74}_{\tau_0}(q))^2,
\]
which decays at the rate $\tau^{2s-5}$ with $2s-5<-1$ as $\tau\rightarrow\infty$,
and the integration with respect to $ \tau$ gives  \eqref{Equivalence}:
\[   \Big|\mathcal{E}^s_{\tau_0}(q) - (E^s_{\tau_0})^2 \Big| \lesssim  \tau_0^{2(s-2)}      E^0_{\tau_0}(q) (E^{\frac74}_{\tau_0}(q))^2  \lesssim  \frac{E^0_{\tau_0}(q)}{\tau_0^{1/2}} (E^s_{\tau_0}(q))^2, \]
where we used  the trivial estimate $E^{\frac74}_{\tau_0}(q)\leq {\tau_0}^{-s+\frac74}E^s_{\tau_0}(q)$ for the second inequality.



 \smallbreak

 In this paper we only consider the case  $ s<2 $, and for $s>1$ we have expanded above  the rescaled energy norm  $E^0_\tau(q)$   with respect to $\tau$ in the  formulation \eqref{Estau0,larges} of $(E^s_\tau(q))^2$, and similarly we expanded the real part $\Re\ln\Tc^{-1}$  on the imaginary axis $(\lambda, z)=(i\sqrt{\tau^2/4-1}, i\tau/2)$ in the definition \eqref{cEstau1} of $\cE^s_\tau(q)$.
 For  $s\geq 2$  one has to expand
  these terms up to higher-order terms exactly as in \cite{KL}. 
  Then one may replace the norm $ \Vert |q|^2-1 \Vert_{l^2 DU^2} + \Vert \d_x q \Vert_{l^2 DU^2}$ in \cite{KL} by $E^{0}(q)$ here, which is \eqref{Equivalence} for general $s\geq 0$.
 This yields immediately the estimates \eqref{eq:energiesweak} in Corollary \ref{cor:hamiltonian} as a special case. 
 We do not go into details to avoid  a more technical  presentation.

  \subsubsection{Ideas of the proof of Theorem \ref{thm:energies}}
 Let $q\in 1+\cS$.  Let   $T^{-1}(\lambda; q)$ be the time independent transmission coefficient of the associated Lax operator to the one dimensional Gross-Pitaevskii equation \eqref{GP}, and $\Tc^{-1}(\lambda; q)$ be the renormalized transmission coefficient modulo the mass and the asymptotic  phase change as in \eqref{eq:exp}:
\begin{equation} \label{eq:renorm} 
 \Tc^{-1}(\lambda)=   T^{-1}(\lambda)e^{-i\cM (2 z )^{-1}-i   \Theta (2z(\lambda+z) )^{-1}}.
\end{equation}
This renormalized transmission coefficient $\Tc^{-1}$ 
is defined on $\{q\in X^0: |q|^2-1, \d_x q\in L^1, \, q\neq 0\}$, and has a unique continuous extension to $X^0$ modulo $\exp( 2\pi i(2z(\lambda+z))^{-1}\Z)$, or equivalently, on the universal covering space of $X^0$
 with values in $ \mathbb{C} $.

We approximately diagonalize the Lax operator using the regularisation $r$ (see \eqref{r,intro} above) and $\zeta=\lambda+z$ with $\Im\zeta>0$ (see \eqref{eq:approxdiag} below), and obtain an expansion 
\begin{equation*} 
T_c^{-1}(\lambda) =e^{\Phi(\lambda)} \sum_{n=0}^\infty T_{2n}(\lambda),
\end{equation*}
where the phase correction function $\Phi$ reads explicitly as
\begin{equation}\label{Phi}
   \Phi(\lambda)= -\int_{\R} \frac{ i\zeta \big[ |r|^2-1 + 2\Re (\bar r (q-r)) \big]  -  \bar r \d_x r}{|r|^2-\zeta^2}\dx-\frac i{2z}\cM  -\frac{i}{2z(\lambda+z)}\Theta,
\end{equation}
and   $T_{2n}$ are $2n$ dimensional integrals essentially from a Picard iteration from $x=-\infty$ which can be found in \eqref{T2n} below.  
A direct estimate gives, e.g. on the imaginary axis $(\lambda, z)=(i\sqrt{\tau^2/4-1}, i\tau/2)$ and $  \tau \ge \max\{2,   2C(E_\tau^0(q))^2 \} $,
\[ \Big| \sum_{n=2}^\infty T_{2n}(\lambda)  \Big|
\leq C(\frac{E^0_\tau(q)}{\sqrt{\tau}})^4.
\] 
such that if we take the logarithm: 
\[ \Big| \ln T_c^{-1} -(\Phi+ T_2) \Big|
\leq C(\frac{E^0_\tau(q)}{\sqrt{\tau}})^4.
\] 
Careful estimates give 
\[ \Bigl| \Re (\Phi+T_2)\Big|_{\lambda= i\sqrt{\tau^2/4-1}}  + \frac{1}{\tau }(E^0_\tau(q))^2 \Bigr|
\leq C(\frac{E^0_\tau(q)}{\sqrt{\tau}})^3,
\]  
which implies the energy estimate \eqref{eq:energies} of Theorem \ref{thm:energies} on the imaginary axis.

 The odd conserved Hamiltonians $\tilde H_{2n+1}$ in \eqref{eq:momentum} in Corollary \ref{cor:hamiltonian} are defined by the asymptotic series 
 \[ \frac1{2i} \Big( \ln T_c^{-1} ( \lambda)- \ln \overline{T_c^{-1} (- \bar \lambda)} \Big) \sim  \frac{\lambda}z  \sum_{n=0}^\infty \tilde H_{2n+1} (2z)^{-2(n+1)}. \] 
 The claim follows by combining the estimates of \cite{KL} with \eqref{eq:highmomentae}.

\subsection*{Organisation of the paper} 
We will discuss the topology of the metric space $(X^s,d^s)$ in Section \ref{subs:Theta} where we will prove Theorem \ref{thm:homotopy} and Theorem \ref{thm:E1}.  Theorem \ref{thm:energies}   will be proven in Section \ref{sec:T}.

 \section{The topology of \texorpdfstring{$X^s$}{X\textasciicircum s}}\label{subs:Theta}
 
 In this section we prove  Theorem \ref{thm:homotopy} 
 that $Q$ is a strong deformation retract 
 and Theorem  \ref{thm:E1} about the asymptotic phase shift $ \Theta$ and the renormalized momentum $H_1$. The first step consists in estimates for an approximation, which is nontrivial since $X^s$ is not a linear function space.

\subsection{The regularization: Estimates for \texorpdfstring{$r$}{r}} 
\label{subs:rentrans}

One of the difficulties of dealing with $X^s$ with $ s \le \frac12 $ is that $H^s(\R)$ is not an algebra. 
We regularize $q$ by $ r = \tau^2 D_\tau^{-2}q$
and derive estimates for the crucial functions 
$ |r|^2-1$, $ r'$ and $ q-r$ in terms of $E^0_\tau (q)$   in Lemma \ref{lem:regular} below. 

\begin{defi} 
Let $ \tau \geq  2$, $ s \in \R $ and $1<p< \infty$. We define 
\[ \Vert f \Vert_{W^{s,p}_\tau(\R)} =   \Vert D_\tau^s f \Vert_{L^p(\R)} \] 
where $D_\tau=(-\d_x^2+\tau^2)^{\frac12}$ is defined by the Fourier multiplier as in \eqref{Sobolev}.
\end{defi} 

The case $s=-1$ is of particular interest. 
Let $ j_\tau = \chi_{\{ x< 0 \} }   e^{\tau x} =\left\{\begin{array}{ll} e^{\tau x}& \hbox{ if }x<0 \\ 0&\hbox
{ if } x\geq 0\end{array}\right.$. Then   for $ 1<p< \infty$, we have by the above definition
\begin{equation}\label{W-1,j}
\Vert f \Vert_{W^{-1,p}_\tau(\R)}=\|D_\tau^{-1}f\|_{L^p(\R)} \sim 
\Vert j_\tau\ast  f \Vert_{L^p(\R)}. \end{equation}

\begin{lem} \label{lem:regular} 
Let $ \tau \geq 2$ and $ q \in X^0$.
There exists $r$ so that   $r \in X^2$ and
\begin{equation}\label{eq:estimate:r}  \begin{split} 
&C_p^{-1}\Vert q' \Vert_{W^{-1,p}_\tau} \leq \,   \Vert q-r \Vert_{L^p}  + \tau^{-1} \Vert r' \Vert_{L^p}  \leq C_p    \Vert q' \Vert_{W^{-1,p}_\tau},  \quad \forall p\in (1,\infty),   \\
&\Vert r \Vert_{L^\infty}\,    \le C\tau  \big( 1 + \tau^{-1/2}    E_\tau^0(q)\big),  \\ 
&\Vert |r|^2-1 \Vert_{L^p}\,    \le C_p  \tau  \bigl(  1 + \tau^{-1/2}  E^0_\tau(q)\bigr)    \Vert\bigl( |q|^2-1, q'\bigr) \Vert_{W^{-1,p}_\tau},
\quad\forall p\in [2,\infty),
\end{split}
\end{equation}
where $C_p>0$ is some universal constant depending only on $p$. 
\end{lem}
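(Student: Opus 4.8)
The plan is to work directly with the Fourier multiplier $r = \tau^2 D_\tau^{-2} q$, equivalently $\hat r(\xi) = \frac{\tau^2}{\tau^2+\xi^2}\hat q(\xi)$, and to exploit the explicit kernel representation. First I would record that $q - r = D_\tau^{-2}(-\partial_x^2) q = D_\tau^{-2} \partial_x (\partial_x q)$, so that $q-r = D_\tau^{-1}\bigl(D_\tau^{-1}\partial_x\bigr)(\partial_x q)$ and similarly $r' = \tau^2 D_\tau^{-2}\partial_x q = \tau^2 D_\tau^{-1}\bigl(D_\tau^{-1}\partial_x\bigr) D_\tau^{-1}(\partial_x q)$. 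Since $D_\tau^{-1}\partial_x$ is a bounded Fourier multiplier on $L^p$ for $1<p<\infty$ with norm independent of $\tau$ (its symbol $i\xi/\sqrt{\xi^2+\tau^2}$ is an $L^p$-multiplier by the Mikhlin–Hörmander criterion, uniformly in $\tau$), both $\Vert q-r\Vert_{L^p}$ and $\tau^{-1}\Vert r'\Vert_{L^p}$ reduce to $\Vert D_\tau^{-1}(\partial_x q)\Vert_{L^p} = \Vert q'\Vert_{W^{-1,p}_\tau}$ up to $p$-dependent constants; this gives the first two-sided estimate in \eqref{eq:estimate:r}. The lower bound follows because $q' = D_\tau\bigl((q-r) + \tau^2 D_\tau^{-2}\,\text{(lower order)}\bigr)$ can be reconstructed from the same two quantities, again using only uniform $L^p$-boundedness of the relevant multipliers.

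\textbf{The $L^\infty$ bound on $r$.} For the second estimate I would split $r = 1 + (r - 1)$ and write $r - 1 = \tau^2 D_\tau^{-2}(q-1)$. Using the kernel $D_\tau^{-2}$, which is $\tfrac{1}{2\tau} e^{-\tau|x|}$, together with $D_\tau^{-1}$ whose kernel is comparable to $j_\tau$ from \eqref{W-1,j}, one estimates pointwise by convolution. The key is that $\Vert \tau^2 D_\tau^{-2} g\Vert_{L^\infty} \lesssim \tau^{1/2}\Vert g\Vert_{L^2}$ by Cauchy–Schwarz against the $L^2$ norm of the kernel $\tfrac{\tau^2}{\tau^2+\xi^2}$ (which is $\sim \tau^{1/2}$). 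Applying this to $g = D_\tau^{-1}(\text{something controlled by } E^0_\tau(q))$ — more precisely recognizing that $r-1$ can be expressed through $D_\tau^{-1}(|q|^2-1)$ and $D_\tau^{-1}(q')$ after an algebraic manipulation — yields the $\tau^{1/2} E^0_\tau(q)$ correction, and the leading $C\tau$ comes from the background value. I would be careful to track the factor $\tau^{-1/2}$ in front of $E^0_\tau(q)$ so that the bound reads $C\tau(1+\tau^{-1/2}E^0_\tau(q))$ as stated.

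\textbf{The estimate on $|r|^2-1$.} This is the main obstacle, because $|r|^2-1$ is quadratic in $r$ and $H^s$ is not an algebra for $s\le\tfrac12$ — exactly the difficulty flagged in the text. My plan is to write $|r|^2 - 1 = (|r|^2 - |q|^2) + (|q|^2 - 1)$. For the first term I use $|r|^2-|q|^2 = \Re\bigl(\bar r(r-q) + (\bar r - \bar q)q\bigr)$ and the product estimate $\Vert fg\Vert_{L^p} \le \Vert f\Vert_{L^\infty}\Vert g\Vert_{L^p}$: the $L^\infty$ factor is controlled by the already-proven bound $\Vert r\Vert_{L^\infty}\lesssim \tau(1+\tau^{-1/2}E^0_\tau(q))$, while $\Vert q-r\Vert_{L^p}\lesssim \Vert q'\Vert_{W^{-1,p}_\tau}$ from the first part (here using that $q-r\in L^p$ even though $q$ itself is only locally in $H^s$, since only differences are controlled). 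For the second term $|q|^2-1$ I compare $D_\tau^{-1}$-norms, noting that $\Vert |q|^2-1\Vert_{L^p}$ must be related back to $\Vert(|q|^2-1)\Vert_{W^{-1,p}_\tau}$ through a regularization of $|q|^2-1$ itself; the trick is to realize that the smoothing by $D_\tau^{-2}$ inherent in passing to $r$ effectively trades one derivative for a factor of $\tau$, producing the prefactor $\tau$ in the stated bound. The multiplicative structure forces the appearance of both $\Vert(|q|^2-1)\Vert_{W^{-1,p}_\tau}$ and $\Vert q'\Vert_{W^{-1,p}_\tau}$ on the right, which is why the bound is phrased in terms of $\Vert(|q|^2-1, q')\Vert_{W^{-1,p}_\tau}$. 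Collecting the two contributions and using $p\ge 2$ to absorb the $L^\infty\cdot L^p$ Hölder pairing cleanly gives the third estimate. The delicate point throughout will be maintaining uniformity of all multiplier norms in $\tau$ and correctly bookkeeping the powers of $\tau$ so that the final constants are genuinely $\tau$-independent.
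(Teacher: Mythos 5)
Your proof of the first line of \eqref{eq:estimate:r} is fine and is essentially the paper's own argument (uniform H\"ormander multiplier bounds for $D_\tau^{-1}\partial_x$, plus reconstruction of $\Vert q'\Vert_{W^{-1,p}_\tau}$ from $q-r$ and $r'$). The $L^\infty$ bound, however, fails at its very first step: you write $r-1=\tau^2 D_\tau^{-2}(q-1)$ and estimate by Cauchy--Schwarz, which requires $q-1\in L^2$. For $q\in X^0$ this is false: only $|q|^2-1$ and $q'$ are controlled (in $H^{-1}_\tau$), while $q-1$ itself is merely $L^2_{\rm loc}$ --- think of $q\equiv e^{i\alpha}$ or $q=e^{i\ln(1+|x|)}$, for which $E^0_\tau(q)$ is zero or small but $q-1$ has no global decay. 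The proposed repair, that ``$r-1$ can be expressed through $D_\tau^{-1}(|q|^2-1)$ and $D_\tau^{-1}(q')$ after an algebraic manipulation,'' cannot work: for $q\equiv e^{i\alpha}$ both of those quantities vanish identically while $r-1=e^{i\alpha}-1\neq 0$, so no such formula exists. The bound is phase-blind and must be proved locally, which is what the paper does: it tests $|q|^2-1$ against a bump function on intervals $I$ of length $\tau^{-1}$ to get $\Vert q\Vert_{L^2(I)}\lesssim \tau^{1/4}(E^0_\tau(q))^{1/2}+E^0_\tau(q)+\tau^{-1/2}$, and then combines the ODE $\tau^2 r-r''=\tau^2 q$ with a Sobolev inequality on $I$; no global information about $q$ itself is ever invoked.

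The third estimate has the same defect in a worse form. Your decomposition $|r|^2-1=(|r|^2-|q|^2)+(|q|^2-1)$ leaves the term $\Vert |q|^2-1\Vert_{L^p}$, which need not even be finite: for $q\in X^0$ the function $|q|^2-1$ is only in $H^{-1}\cap L^1_{\rm loc}$, and no reverse Bernstein inequality $\Vert f\Vert_{L^p}\lesssim\tau\Vert f\Vert_{W^{-1,p}_\tau}$ holds without frequency localization --- and there is none here, because smoothing $q$ does not smooth $|q|^2$ (convolution and taking the modulus squared do not commute, $|h\ast q|^2\neq h\ast|q|^2$, so the ``smoothing inherent in passing to $r$'' never acts on $|q|^2-1$). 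In addition, your bound for $|r|^2-|q|^2=\Re\bigl(\bar r(r-q)+(\bar r-\bar q)q\bigr)$ pairs the second summand with $\Vert q\Vert_{L^\infty}$, which is likewise uncontrolled on $X^0$. The paper's proof never places $|q|^2-1$ in $L^p$: it establishes the localized Sobolev inequality \eqref{eq:interpolation}, bounding $\Vert |r|^2-1\Vert_{L^p(I)}$ by $\Vert 2\Re(\bar r r')\Vert_{L^1(I)}$ plus a \emph{weighted} $L^1$ norm of $j_\tau\ast(|r|^2-1)$; the latter is compared to $j_\tau\ast(|q|^2-1)$ (exactly the $W^{-1,p}_\tau$ datum) and to $j_\tau\ast(|q|^2-|r|^2)$, with the weight moved through the convolution by the commutator/Schur-test equivalence \eqref{Equiv:rho}, and the resulting estimate is raised to the power $p$ and summed over intervals. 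This weighted localization machinery is the actual content of the lemma and is absent from your sketch.
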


In particular  if $q \in X^0$, then $r \in X^2$,  with the energy 
$$ E^1(r)=\bigl\| (|r|^2-1, r')\bigr\|_{L^2}\leq C  \bigl(  1+ E^0 (q)\bigr) E^0 (q).$$

\begin{proof}  
We choose 
\[ r = \tau^2 D_\tau^{-2} q=\tau^2(-\d_{xx}+\tau^2)^{-1}q, \] 
and obtain by a multiple use of H\"ormander's multiplier theorem the first line of \eqref{eq:estimate:r} for $p\in (1,\infty)$ by 
\begin{align*}
    & \Vert r' \Vert_{W^{-1,p}_\tau} = 
\Vert  \tau^2 D_\tau^{-3}  q' \Vert_{L^p} \le C_p\Vert D_\tau^{-1} q' \Vert_{L^p},
\\
& \Vert r' \Vert_{L^p}\,  =    \Vert \tau^2D_\tau^{-2} q' \Vert_{L^p}
\le C_p \tau \Vert D_\tau^{-1} q' \Vert_{L^p}, 
\\
& \Vert r-q \Vert_{L^p} = \Vert   D_\tau^{-2} \d_x  q' \Vert_{L^p}
\le C_p \Vert D_\tau^{-1} q' \Vert_{L^p}, 
\\
&\Vert D_\tau^{-1} q'  \Vert_{L^p}\,  
   \le  \Vert D_\tau^{-1}r' \Vert_{L^p} + \Vert D_\tau^{-1} (q-r)' \Vert_{L^p} 
  \le C_p \tau^{-1} \Vert   r' \Vert_{L^p} +C_p \Vert q-r \Vert_{L^p}. 
\end{align*}     

\smallbreak  
We turn to the proof of the second line in \eqref{eq:estimate:r}.
Let $I$ be an interval with the center $x_0\in\R$ and the length $\tau^{-1}$.
Let $J\supset I$ have the same center $x_0\in\R$ and the length $3\tau^{-1}$.
Let $\eta \in C^\infty_c(\R; [0,1])$ be supported in $(-1,1)$, identically $1$ on $[-1/2,1/2]$,   and $\eta_{\tau} = \tau \eta( \tau x) $, then
  \[ \begin{split}  \int_I |q|^2 dx  
  &= \int_I \eta(\tau(x_0-x)) |q|^2 (x)\dx 
  \\&\leq \int_J \eta(\tau(x_0-x)) (|q|^2-1) (x)\dx+\int_J \eta(\tau(x_0-x)) \dx 
  \\ &  \lesssim  \Vert |q|^2 -1 \Vert_{H^{-1}_\tau }  \tau^{-1}  \Vert \eta_\tau \Vert_{H^{1}_\tau} + \tau^{-1}  \\ & 
   \lesssim  \tau^{1/2}  \Vert |q|^2 -1 \Vert_{H^{-1}_\tau} +  \tau^{-1} .\end{split}   \] 
  Hence, by use of $\eqref{eq:estimate:r}_1$: $\|q-r\|_{L^2}\lesssim \|q'\|_{H^{-1}_\tau}$, 
 \begin{equation} \label{eq:rl2} 
 \begin{split}
 \Vert q \Vert_{L^2(I)} + \Vert r \Vert_{L^2(I)} 
& \leq     \Vert r-q \Vert_{L^2(I)} + \Vert  q \Vert_{L^2(I)}
\\
& \lesssim   \tau^ {1/4}\Vert |q|^2-1 \Vert_{H^{-1}_\tau}^{1/2} +  \Vert q' \Vert_{H^{-1}_\tau} +  \tau^{-1/2}  . \end{split}\end{equation}  
 For the  second estimate $\eqref{eq:estimate:r}_2$ we observe that 
 \[ \tau^2 r -r_{xx} = \tau^2 q, \] 
 and hence by Sobolev's inequality
 \[\begin{split}  \Vert r \Vert_{L^\infty(I)}\, & \lesssim \tau^{1/2}\|r\|_{L^2(I)}+\tau^{-3/2}\|r''\|_{L^2(I)} \\ & \lesssim      \tau^{1/2}  \big( \Vert r \Vert_{L^2(I)}+ \Vert q \Vert_{L^2(I)}\big) 
 \\ & \lesssim \tau+\tau ^{1/2} E^0_\tau(q). 
 \end{split} \]    
 
\medskip 
Before we address the third line of \eqref{eq:estimate:r} we   claim the Sobolev inequality (recalling   $j_\tau=\chi_{\{x<0\}}e^{\tau x}$ and $\Vert j_\tau*f\Vert_{L^1}$ can be used to define the norm  $\|f\|_{W^{-1,1}_\tau}$)      \begin{equation} \label{eq:interpolation} \Vert f \Vert_{L^p(I)} \lesssim  \tau^{-1/p} \Vert f' \Vert_{L^1(I)}  + \tau^{2-1/p} \Vert (1+\tau^2|x-x_0|^2)^{-1} (j_\tau * f)(x) \Vert_{L^1 (\R)}.
  \end{equation}  
    By scaling and translation it suffices to prove inequality \eqref{eq:interpolation} for $\tau =1$ and $x_0=0$. This simplifies the notation. By the fundamental theorem of calculus, if $I=[-1/2, 1/2]$ is an interval of length $1$ and $ \eta \in C^1_b(I)$
 with integral $1$ (we can choose $ \eta $  so that 
 $ \Vert \eta+ \eta_x  \Vert_{L^\infty}  \le  6 $), then
\begin{equation}\label{Sobolev:g}\begin{split} \Vert g \Vert_{L^p(I)} \, & \le  \Vert g \Vert_{L^\infty(I)}
 \le \Vert g' \Vert_{L^1(I)} + \Big|\int_I g \eta \dx \Big|
 \\ & = \Vert g' \Vert_{L^1(I)} + \Big| \int_{\R} (j_1* g) (\partial_x +1 ) \eta \dx    \Big| 
\\ &  \le  \Vert g' \Vert_{L^1(I)} + 6\Vert j_1\ast g \Vert_{L^1(\R)}. 
 \end{split} 
 \end{equation}
 This is almost what we need, up to  the nonlocality of the  second term  on the right hand side of \eqref{eq:interpolation}. 
  Notice that by the exponential decay property of the kernel $j_1(x)=\chi_{\{x<0\}}e^{x}$, we have  
 \begin{align}\label{Equiv:rho}
     \|j_1\ast (\rho  f)\|_{L^1(\R)}\sim \|\rho  (j_1\ast f)\|_{L^1(\R)},
 \end{align}
 where $\rho$ is chosen to be continuous and positive  so that $ \rho=1$ on $I$ and 
 \[ \rho(x) = (1+ |x-x_0|^2)^{-1}\qquad \text{ for } |x-x_0| \ge 3/2.   \]  
 Indeed, in order to show $\|\rho(j_1\ast f)\|_{L^1}\lesssim \|j_1\ast(\rho f)\|_{L^1}$, we take   $g = j_1 * (\rho  f)$ resp. 
 \[ f = \rho^{-1} ( \partial_x - 1) g, \] 
and it suffices to show 
 \[ \Vert \rho \bigl( j_1 * ( \rho^{-1} (\partial_x -1 ) g) \bigr) \Vert_{L^1} \lesssim \Vert g \Vert_{L^1} .\] 
 Similarly, in order to show $\|j_1\ast (\rho f)\|_{L^1}\lesssim \|\rho(j_1\ast f)\|_{L^1}$, it suffices to show  
 $$\|j_1\ast \bigl(\rho (\partial_x-1) \rho^{-1}g\bigr)\|_{L^1}\lesssim \|g\|_{L^1}.$$
If we exchange the orders of $ \rho^{-1}$ and the derivative $(\d_x-1)$, then one arrives at  the identity, and hence it suffices to bound the commutators $\rho j_1\ast [\rho^{-1}, \d_x]g$ and $j_1\ast (\rho [\d_x,\rho^{-1}]g)$ as follows
 \[ \Vert \rho j_1 * ( \rho^{-1} \frac{\rho'}{\rho}  g)  \Vert_{L^1} \lesssim \Vert g \Vert_{L^1},
 \quad \|j_1\ast(\frac{\rho'}{\rho}g)\|_{L^1}\lesssim \|g\|_{L^1}. \] 
The second inequality holds obviously by Young's inequality and it suffices to show 
\begin{align*}
    \|\rho j_1\ast (\rho^{-1}h)\|_{L^1}\lesssim \|h\|_{L^1}.
\end{align*}
This is ensured by the boundedness  of the integral operator $h\mapsto \int_{\R} K(x,y)h(y)\dy$ in $L^1$ with the integral kernel
$$
K (x,y) = \left( \frac{\rho(x)}{\rho(y)}\right)  1_{\{y<x\}} e^{-(x-y)}
$$
  satisfying Schur's criterium  
  \[ \sup_x \Vert K  (x, . ) \Vert_{L^1(\R) }
  + \sup_y \Vert K  (.,y) \Vert_{L^1(\R)}<\infty.\] 
  Thus \eqref{Equiv:rho} follows.
Finally we apply \eqref{Sobolev:g} with $g=\rho f$,  \eqref{Equiv:rho} and rescaling to derive \eqref{eq:interpolation}. 
  
  \medskip   
 We turn to the proof of the last line of \eqref{eq:estimate:r}, by use of  \eqref{eq:rl2},  \eqref{eq:interpolation} and \eqref{Equiv:rho}. Let  $p\ge 2$, then
 \[ \begin{split} \Vert |r|^2-1 \Vert_{L^p(I)}\,
 &  \lesssim  \tau^{-\frac1p}  \Vert \bar r r' \Vert_{L^1(I)}+ \tau^{2-\frac1p}   \Vert (1+\tau^2|x-x_0|^2)^{-1} \bigl( j_\tau *  (|r|^2-1) \bigr)(x) \Vert_{L^1(\R)  }
 \\ & \lesssim \tau^{-\frac12}   \Vert  r \Vert_{L^2(I)} \Vert  r' \Vert_{L^p(I)}  + \tau^{1-\frac1p} \Vert (1+ \tau^2 |x-x_0|^2)^{-1} (  |q|^2-|r|^2)  \Vert_{L^1(\R)}
 \\ & \qquad 
 + \tau^{2-\frac1p} \Vert (1+\tau^2|x-x_0|^2)^{-1}\bigl(  j_\tau *  (|q|^2-1) \bigr)(x) \Vert_{L^1(\R)}
 \\ & \lesssim  \tau^{-\frac12}   \Vert  r \Vert_{L^2(I)} \Vert  r' \Vert_{L^p(I)} \\ &\qquad + \tau^{\frac12}     \mathop{\sup}\limits_{I', |I'|=\tau^{-1}}  \Vert (q,r) \Vert_{L^2(I')}   \Vert (1+\tau^2 |x-x_0|^2)^{-\frac1p} (q-r)  \Vert_{L^p(\R)} \\ & \qquad +  \tau \Vert (1+
 \tau^2 |x-x_0|^2)^{-1/p} \bigl( j_\tau *(|q|^2-1 )\bigr)(x) \Vert_{L^p(\R)}. 
 \end{split} 
 \] 
 We have used (the rescaled version of) \eqref{Equiv:rho}   for the second  inequality with $\rho_\tau(x):=(1+ \tau^2|x-x_0|^2)^{-1}$:
 \[ 
 \begin{split}
     \tau^{2-\frac1p}\|\rho_\tau(j_\tau \ast (|q|^2-|r|^2))\|_{L^1(\R)}\, & \lesssim \tau^{2-\frac1p}\|j_\tau\ast (\rho_\tau (|q|^2-|r|^2)) \|_{L^1(\R)}
     \\ & \leq \tau^{1-\frac1p}\| \rho_\tau  (|q|^2-|r|^2) \|_{L^1(\R)},
 \end{split}
 \] 
 which has been further bounded in the last inequality by H\"older's inequality and change of variables $\tau x\mapsto x$  as
 \begin{align*}
   &  \tau^{\frac{1}{p'}} \Vert {\rho_\tau}^{\frac1{p'}}  |(q,r)|   \Vert_{L^{p'}(\R)}  \Vert {\rho_\tau}^{\frac1p}  (q-r)\Vert_{L^p(\R)} 
     =  \tau^{\frac{1}{p'}} \Bigl( \int_{\R}\rho_\tau |(q,r)|^{p'} \dx\Bigr)^{\frac{1}{p'}}\Vert {\rho_\tau}^{\frac1p}  (q-r)\Vert_{L^p(\R)} 
     \\
&   \lesssim   \tau^{\frac{1}{p'}}
\Bigl( \mathop{\sup}\limits_{I', |I'|=\tau^{-1}}   \Vert (q,r) \Vert_{L^2(I')}\Bigr)
\tau^{-(\frac12-\frac1p)} \Bigl( \sum_{n\in \Z} \|\rho\|_{L^{\frac{1-\frac1p}{\frac12-\frac1p}}([n,n+1])}\Bigr)^{\frac{1}{p'}}\Vert {\rho_\tau}^{\frac1p}  (q-r)\Vert_{L^p(\R)}
\\
&\lesssim \tau^{\frac12} \mathop{\sup}\limits_{I', |I'|=\tau^{-1}}   \Vert (q,r) \Vert_{L^2(I')}\Vert {\rho_\tau}^{\frac1p}  (q-r)\Vert_{L^p(\R)}  .
 \end{align*}  
 We raise the inequality to the power $p$ and  sum over  intervals  $I$
 to obtain
 \[ \Vert |r|^2-1 \Vert_{L^p} \le  C \tau  \bigl( 1+ \tau^{-1/2} E^0_\tau(q)\bigr)\Big( \Vert q' \Vert_{W^{-1,p}_\tau} + \Vert |q|^2-1 \Vert_{W^{-1,p}_\tau }\Big) . \]

  The proof for the estimates \eqref{eq:estimate:r} is finished. \end{proof}

\subsection{Proof of Theorem \ref{thm:homotopy} } 
 
\begin{proof}[Proof of Theorem \ref{thm:homotopy}]
We will assume $s=0$ for notational simplicity and clarity, but we will  give an argument which immediately applies to $ s \in [0,\infty)$. We define $ r = \tau^2 (\tau^2-\partial^2)^{-1} q=h\ast q $, $h:=\frac{\tau}2 e^{-\tau|x|}$  as in Lemma \ref{lem:regular}. 
We fix $\tau$ large enough such that the regularised solitons $r_c:=h\ast q_c$  satisfy
\begin{equation}\label{rc,R}
|r_c(R)|\in [\frac34,1],\quad \forall |R|\geq 1,\quad \forall c\in [-1,1].
\end{equation}
The map $X^0\ni q\mapsto r\in X^2$ is Lipschitz continuous (see Lemma 2.1 \cite{KL}).
 
We are going to construct the deformation from  $X^0$ to $Q$ essentially in two steps:
We first construct a deformation $\Xi_1(t,q)$ defined on $(t,q)\in [0,1]\times X^0$, such that some regularised function $\tilde r$ of $q\in \{\Xi_1(1,p)|p=\Xi_1(0,p)\in X^0\hbox{ and   away from }Q: d^0(p, Q)>2\delta\}$   has no zeros outside the fixed space interval $[-1,1]$. 
At the same time we   keep $Q$ fixed under  $\Xi_1$: $Q=\Xi_1(t,Q)$. 

 We then investigate the asymptotic behaviours of the regularised dark solitons $r_c$ of $q_c\in Q$, such that the regularised function $\tilde r$ of $q\in \{\Xi_1(1,p)|p\in X^0\hbox{ and close to }Q: d^0(p, Q)<2\delta\}$   has no zeros outside some large space interval $[-R,R]\supset [-1,1]$.
 We construct the second deformation $\Xi_2$ by unwinding the rotations at infinity.
 \smallbreak
 
 \noindent\textbf{Step 1: Construction of $\Xi_1$.}
We first have the following observation. Given an interval $I$ we define  
\[ E^1(r;I) = \Bigl( \int_I \bigl( |r'|^2 + (|r|^2-1)^2\bigr) \dx \Bigr)^{\frac12}.  \] We assume that  $\varepsilon$ is sufficiently small so that $|r(x)|\in (\frac12,2)$ if $E^1(r;(x-1,x+1)))< 2\varepsilon$.
  For $q\in X^0$ with $ E^1(r) >  \varepsilon $, we can define  a continuous map 
$$ q \to (a_-, a_+)\in\R^2$$ 
where 
\[
\begin{split} \hspace{2cm} & \hspace{-2cm} 
 E^1(r; (-\infty, a_-+1))+ \frac{\varepsilon}2 (1+ \tanh(a_-+1)) \\ &  = E^1(r;(a_+-1,\infty))+ \frac{\varepsilon}2 (1-\tanh(a_+-1))   = \varepsilon. \end{split} 
 \] 
 Then $a_\pm $ are uniquely defined since the functions are strictly monotone, and $ a_-+1 < a_+-1$ since otherwise 
 \[ \begin{split}\hspace{2cm} & \hspace{-2cm} \varepsilon < E^1(r; (-\infty, a_-+1))+  E^1(r;(a_+-1,\infty))
 \\ & = \varepsilon + \frac{\varepsilon}2 ( \tanh(a_+-1) - \tanh(a_-+1)) \le  \varepsilon, 
 \end{split} 
 \] 
 which is impossible. We want to extend the definition of $a_\pm $ as continuous functions $x_\pm$ on the whole of $X^0$.
Let 
\[ \rho_0(t) = \left\{ \begin{array}{cl} 0 & \text{ if } t \le 1 \\ 
t-1 & \text{ if } 1< t \le 2 \\
1 & \text{ if } t >2. \end{array} \right. \]   
 We define  
\[ x_+(q) = 1+\rho_0( E^1(r)/\varepsilon) (a_+(q)-1), \qquad  x_-(q) =-1+ \rho_0(E^1(r)/\varepsilon) (a_-(q)+1)  \]  
which is now continuous on $X^0$.
Then  $x_\pm = \pm 1$ if $ E^1(r) \le \varepsilon$, and
$$ |r(x)| \in (\frac12,2),\quad \forall x\in (-\infty, x_-)\cup (x_+,\infty),$$
and hence the function $r$ does not vanish outside $(x_-,x_+)$ with 
$$
x_+-x_-\geq 2.
$$

\smallbreak

Given $ \delta >0$ we define a continuous function $\rho: X^0\mapsto [0,1]$ as follows:
\[ \rho(q) = \rho_0 ( d^0(q, Q)/\delta ),\hbox{ where }Q=\{ q_c\,|\, c\in [-1,1]\}. \] 
Notice that $\rho(q_c)=0$, $\forall q_c\in Q$, and $\rho(q)=1$ for $q$ away from $Q$: $d^0(q,Q)>2\delta$.
  We define the first deformation via 
  \begin{equation}\label{eq:firstdeformation}  \bigl( \Xi_1(t,q)\bigr)(y) =  q\Bigl( (1+ t \rho(q)   (x_+-x_-)/2)y +   t\rho(q) (x_++x_-)/2  \Bigr),\quad t\in [0,1]. \end{equation}  
   It is the identity if $t=0$. It is clearly  continuous and the identity on $Q$.
 Furthermore, since $t\rho(q)$ take  values in $[0,1]$ and $x_+-x_-\geq 2>0$, the function $x=x(t,y)=   (1+ t \rho(q)   (x_+-x_-)/2)y +   t\rho(q) (x_++x_-)/2$ as the argument for the $q$-function above in \eqref{eq:firstdeformation} satisfies
 \begin{align*}
    &x\geq 1+t\rho(q)x_+\hbox{ if }y\geq 1,\\
    &x\leq -1+t\rho(q)x_-\hbox{ if }y\leq -1.
 \end{align*} 
We  denote the composition of $r(x)$ with this transformation of coordinates $y\mapsto x(t,y)$ by $\tilde r$, 
\begin{equation} \label{eq:tilder} 
\tilde r(t,y)=  r\Bigl( (1+ t \rho(q)   (x_+-x_-)/2)y +   t\rho(q) (x_++x_-)/2 \Bigr). 
\end{equation} 
We observe that for $q\in X^0$ away from $Q$ with $d^0(q, Q)>2\delta$, 
\begin{equation} \label{eq:tilder,away}\begin{split} 
&\tilde r(1,y)=r\bigl( (1+   (x_+-x_-)/2)y +    (x_++x_-)/2\bigr)
\\
&\hbox{ with }|\tilde r(1,y)|\in (\frac12,2) \hbox{ for }|y|\geq 1.
\end{split}\end{equation}
For $q\in X^0$ close to $Q$: $ d^0(q, Q)<\delta$, 
\begin{equation}\label{eq:tilder,close}
 \tilde r(t,y)=r(y).
 \end{equation}

We are going to choose $\delta>0$ small enough in next step such that the above \eqref{eq:tilder,away} holds also for $q$ close to $Q$: $d^0(q,Q)<2\delta$ if $|y|\geq R_0$ for $R_0>0$ large enough.

\medskip
\noindent\textbf{Step 2. Construction of $\Xi_2$.}
We first investigate  the regularised dark solitons $r_c(R):=(h\ast q_c)(R)$, $h=\frac{\tau}2 e^{-\tau|x|}$,   for large $|R|$   in the following lemma. 
 \begin{lem} \label{lem:inj} 
 There exists $R_0>0$ so that for all $R\geq R_0$, the  map 
 \[  (-1,1]\ni c \to  \phi(c) :=     r_c(R)/  r_c(-R)  \] 
 is injective  and 
 \begin{equation} \label{eq:uniform}   \frac34 \le  |r_c(R) |, |r_c(-R)| \le 1,
 \quad \forall c\in (-1,1].
 \end{equation} 
\end{lem}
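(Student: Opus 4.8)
The plan is to separate the real and imaginary parts of $r_c=h\ast q_c$, where $h=\frac\tau2 e^{-\tau|x|}$ is even, nonnegative, of unit mass and strictly decreasing in $|x|$. Since $\Im q_c\equiv c$ while $\Re q_c(x)=\sqrt{1-c^2}\tanh(\sqrt{1-c^2}x)$ is odd, convolution with the even kernel $h$ gives $\Im r_c\equiv c$ and makes $g_c:=\Re r_c$ an odd real function; hence
\begin{equation*}
r_c(\pm R)=\pm g_c(R)+ic,\qquad |r_c(\pm R)|^2=g_c(R)^2+c^2 ,
\end{equation*}
so in particular $|r_c(R)|=|r_c(-R)|$ and only one point must be controlled. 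Writing $b=\sqrt{1-c^2}$ and $g_c(R)=b\,A_c(R)$ with $A_c(R)=\int_\R h(R-y)\tanh(by)\dy$, the oddness of $\tanh(b\cdot)$ together with the elementary inequality $h(R-y)>h(R+y)$ for $y,R>0$ gives $A_c(R)>0$, hence $g_c(R)>0$ for $c\in(-1,1)$, while $g_c(R)=0$ at $c=1$.

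The bound \eqref{eq:uniform} requires little new work. The upper bound is the pointwise estimate $|r_c|\le h\ast|q_c|\le h\ast 1=1$ coming from $|q_c|\le1$, and the lower bound is, for $R_0\ge1$, exactly the assumption \eqref{rc,R} already arranged by the choice of $\tau$. (If a self-contained argument is wanted, split into $|c|\ge\frac34$, where $|r_c(R)|^2\ge c^2\ge\frac9{16}$ for every $R$, and $|c|\le\frac34$, where $b\ge\frac{\sqrt7}4$ is bounded below, $A_c(R)\to1$ uniformly with rate $O(e^{-\tau R}+e^{-2bR})$, so that $|r_c(R)|^2\to b^2+c^2=1$ uniformly as $R\to\infty$.)

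The substance of the lemma is injectivity. From the explicit formula
\begin{equation*}
\phi(c)=\frac{g_c(R)+ic}{-g_c(R)+ic},\qquad |\phi(c)|=1 ,
\end{equation*}
cross-multiplying $\phi(c_1)=\phi(c_2)$ and comparing imaginary parts reduces the equation to $c_1g_{c_2}(R)=c_2g_{c_1}(R)$, i.e. to $\Phi_R(c_1)=\Phi_R(c_2)$ for $\Phi_R(c):=c/g_c(R)$; the value $c=1$, where $g_c(R)=0$ forces $\phi(c)=1$, is attained only there. Because $g_c(R)$ depends on $c$ only through $b=\sqrt{1-c^2}$, the map $\Phi_R$ is odd, so it suffices to show $\Phi_R'>0$ on $[0,1)$. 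I would compute $\Phi_R'=(g_c-c\,\partial_cg_c)/g_c^2$ and observe that, using $b^2+c^2=1$, the numerator collapses to the manifestly signed combination
\begin{equation*}
g_c(R)-c\,\partial_cg_c(R)=\frac1b\Big(A_c(R)+c^2B_c(R)\Big),\qquad B_c(R):=\int_\R h(R-y)\,by\sech^2(by)\dy .
\end{equation*}
Since $by\sech^2(by)$ is again odd and positive for $y>0$, the same symmetry argument used for $A_c(R)$ yields $B_c(R)>0$, so the numerator is strictly positive and $\Phi_R$ is strictly increasing for every $R>0$. This gives injectivity (in fact for all $R>0$, so that $R_0\ge1$ — forced only by \eqref{eq:uniform} — also serves here).

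The step I expect to be the main obstacle is precisely this derivative identity: one must differentiate $g_c(R)=\int_\R h(R-y)\,b\tanh(by)\dy$ in $c$, noticing that the two $c$-dependences (the amplitude $b$ and the internal scale $b$ inside $\tanh$) recombine, via $b^2+c^2=1$, into $A_c+c^2B_c$, and then read off the positivity of both integrals from the even, monotone structure of $h$ (the odd integrands being tested against the positive weight $h(R-y)-h(R+y)$, $y>0$). Everything else is bookkeeping.
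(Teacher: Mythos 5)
Your proof is correct, and in its key step it takes a genuinely different route from the paper. Both arguments exploit the same symmetry (your observation that $\Im r_c\equiv c$ and $\Re r_c$ is odd is exactly the paper's identity $r_c(x)=-\overline{r_c(-x)}$), both dispose of \eqref{eq:uniform} by invoking \eqref{rc,R}, and both reduce injectivity to strict monotonicity in $c$ of what is essentially $\arg r_c(R)$ — indeed your $\Phi_R(c)=c/g_c(R)$ is just $\tan(\arg r_c(R))$, which is a legitimate substitute since $g_c(R)>0$ keeps the argument in $(-\frac\pi2,\frac\pi2)$. The divergence is in how monotonicity is proved: the paper computes $\partial_c\arg q_c(R)$ for the \emph{unregularized} soliton explicitly, bounds it below, and then transfers the conclusion to $r_c$ by a perturbation argument valid only for $R$ sufficiently large (using $\lim_{R\to\infty}|q_c'(R)|=0$); you instead differentiate the convolution formula $g_c(R)=\int h(R-y)\,b\tanh(by)\,dy$ directly, and the identity $g_c-c\,\partial_c g_c=\frac1b\bigl(A_c+c^2B_c\bigr)$ together with the kernel inequality $h(R-y)>h(R+y)$ for $y,R>0$ (applied to the odd, positive-on-$\R^+$ integrands $\tanh(by)$ and $by\sech^2(by)$) gives positivity outright. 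What your route buys: it is self-contained, it yields injectivity for \emph{every} $R>0$ (so $R_0$ is dictated solely by \eqref{rc,R}), and it sidesteps the delicate point in the paper's perturbation step, namely that the stated uniform lower bound $\mu>0$ on $\frac12\bigl(\tanh(\sqrt{1-c^2}R)+c^2\sqrt{1-c^2}R\sech^2(\sqrt{1-c^2}R)\bigr)$ degenerates as $c\to\pm1$, so the transfer to $r_c$ requires more care than the paper lets on. What the paper's route buys is an entirely explicit, calculus-level computation on $q_c$ with no differentiation under an integral sign. (Minor quibble, not a gap: in your parenthetical self-contained proof of \eqref{eq:uniform}, the convergence rate is more accurately $O(e^{-\tau R/2}+e^{-bR})$, but uniform convergence — all you need — still holds.)
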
 
 \begin{proof}
By virtue of \eqref{rc,R}, the bound \eqref{eq:uniform} is clearly achieved by choosing $R_0\geq 1$. 
 By construction $ r_c(x) = - \overline{r_c(-x) } $
 and  hence 
 \[  r_c(R)/  r_c(-R) =  - \left( r_c(R)/| r_c(R)| \right)^2. \] 
 It suffices to prove that 
\begin{equation} \label{eq:homo}  \partial_c \arg r_c(R) >0, \quad \lim_{c\to 1} \arg r_c (R) = \frac\pi2, \quad \lim_{c\to -1} \arg r_c(R) =-\frac \pi2.   \end{equation} 
The limits $ c \to \pm 1$ are obvious and we will prove that the derivative $ \partial_c \arg  r_c (R) $
never vanishes for $ R \ge R_0$.  
For fixed $R>0$, wse compute first
\[ 
\begin{split} 
\partial_c q_c(R) \, & = \partial_c \Big[ic + \sqrt{1-c^2} \tanh( \sqrt{1-c^2}R ) \Big]
\\ & = i -cR \sech^2(\sqrt{1-c^2} R) - \frac{c}{\sqrt{1-c^2}}   \tanh(\sqrt{1-c^2} R). 
\end{split} 
\] 
and hence, if $ -1 < c < 1$ 
\[ 
\begin{split} 
\partial_c \arg q_c(R)\, &  = \Im \frac{\partial_c   q_c(R)}{q_c(R)}
\\ & = \frac{ \frac1{\sqrt{1-c^2}}  \tanh(\sqrt{1-c^2}R) + c^2R\sech^2(\sqrt{1-c^2}R)    }{ 1-(1- c^2) \sech^2(\sqrt{1-c^2} R) } 
\\ & \ge \frac12  \Big( \tanh(\sqrt{1-c^2} R) + c^2 \sqrt{1-c^2}  R  \sech^2(\sqrt{1-c^2} R)\Big) 
\\ & \ge \mu >0 
\end{split} 
\] 
for some $ \mu >0 $ if  $R >1$. This implies the claim for $r_c$ if $R$ is sufficiently large since 
$ \lim_{R\to \infty}   |q_c'(R)|=0$.  
 \end{proof} 

Recall the definition of $\tilde r(t,y)$ in  \eqref{eq:tilder} and the fact \eqref{eq:tilder,away}  for $q$ away from $Q$.  
We choose $R_0\ge 1$ large enough and $\delta$ small enough, so that (by virtue of the continuity of the map $X^0\ni q\mapsto r\in X^2$)   for all $q\in X^0$ close to $Q$: $d^0(q,Q)<2\delta$, 
\begin{align*}
&  |\tilde r(1, y) |=\Big|r\Bigl( \big(1+\rho_0(\frac{d^0(q,Q)}\delta)\frac{x_+-x_-}2\big)y
+\rho_0(\frac{d^0(q,Q)}\delta)\frac{x_++x_-}2\Bigr)\Big|  \in  [\frac12,2] ,
\\
&\quad \forall y\not\in [-R_0,R_0].
\end{align*}
Thus $\tilde r(1,y)$ does not have zeros for all $q\in X^0$ and $|y|\geq R_0$.
 
  \smallbreak

In the following we simply take  $R=R_0$ and denote (with an abuse of notations) $\tilde r(1,y)$ by $\tilde r(y).$
Since the map
\[\beta:  Q \ni q_c \to\frac{r_c(R)}{r_c (-R)}  \in \mathbb{S}^1 \] 
is a homeomorphism, we define the retraction $B$ by
\[      B(q)= \beta^{-1} \left(   \frac{\tilde r(R)}{|\tilde r(R)|} \frac{\overline{\tilde r(-R)}}{|\tilde r(-R)|}\right): X^s\mapsto Q . \] 
This is a continuous map from $X^s$ to $Q$ which is the identity on $Q$.  

Recall the first deformation \eqref{eq:firstdeformation}.
 We fix the representative in the equivalence class of $q \in X^s$ by requiring  
 $$ \tilde r(-R)\in (\frac12,2).$$
 There exists a smooth $ \alpha(y)=\alpha( y; \tilde r)$ supported  on $(-\infty, -R+1) \cup (R-1,\infty)$ with bounded derivatives  so that 
 \[ \tilde r(y) = \left\{\begin{array}{cl}  \exp(i\alpha(y)) b(y) & \text{ if } y < -R \\ 
\exp(i \alpha(y)) b(y) \frac{\tilde r(R)}{|\tilde r(R)|} & \text{ if } y >R \end{array} \right.  \] 
where $ \alpha(-R)= \alpha(R)=0 $ and $ b-1 \in  L^2$ are real valued with $ \frac12 \le b \le 2$. 
  We define the second deformation 
 \[ \Xi_2(t, \Xi_1(1,q)) = \Xi_1(1, q) \exp( it ( \alpha(y; B(q))-\alpha(y; \tilde r) )),   \] 
 which again fixes $Q$. As a consequence 
 \[  \Xi_2(1, \Xi_1(1,q)) - B(q)\in L^2  .\]
 We define the final deformation by 
 \[ \Xi_3(t, \Xi_2(1,\Xi_1(1,q))) = (1-t) \Xi_2(1, \Xi_1(1,q)) + t B(q). \]

\end{proof} 
 
 \subsection{Proof of Theorem \ref{thm:E1}} 
 
 Theorem \ref{thm:E1} studies the asymptotic phase shift $ \Theta$ and the modified momentum $H_1$. 
 Before we start the construction rigorously, we recall that if $ q \in X^0 $ never vanishes with $\d_x q\in L^1$, we define 
\[ \Theta(q) = -\Im \int_{\R}   \frac{ \d_x q}{  q} dx \] and if $q \in X^s$ with $s >\frac12 $ never vanishes, then we define
\[ H_1(q) =-\Im  \int_{\R} (|q|^2-1)\frac{\d_x q}q dx. \]
The issue is to find a continuous resp. smooth extension across $q$ with zeros, on the covering space of $X^0$.

We first regularise $q\mapsto \tilde q$ as follows, such that $\tilde q$ does not vanish. 
\begin{lem} \label{lem:tildeq} 
Given $ q_0 \in X^s$, $s\geq 0$, there exist   $ \delta\in (0,1) $ depending only on  $E^0(q_0)$ and a continuous map  
\[  B^{X^0}_\delta(q_0) \ni q \to \tilde q\in X^{s+2} \] 
so that  $|\tilde q| =1 $, 
\begin{equation}\label{q,tildeq}
\Vert q - \tilde q \Vert_{H^s} \le c (E^0(q_0))E^s(q), 
\end{equation}
and, if $\d_x q \in L^1$ then  $ q - \tilde q \in L^1$ and $ \d_x \tilde q \in L^1$.
\end{lem}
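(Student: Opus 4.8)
The plan is to regularise $q$ so as to gain two derivatives, and then to replace the regularisation by a nearby function of modulus one. Concretely, I would take $r=\tau^2 D_\tau^{-2}q$ with $\tau=\tau(E^0(q_0))\ge 2$ fixed, as in Lemma \ref{lem:regular}; then $r\in X^{s+2}$, the map $X^0\ni q\mapsto r\in X^2$ is Lipschitz (Lemma 2.1 of \cite{KL}), and the multiplier estimates that prove the first line of \eqref{eq:estimate:r} give, in $H^s$, the bound $\Vert q-r\Vert_{H^s}\lesssim \Vert \d_x q\Vert_{H^{s-1}_\tau}\lesssim E^s(q)$. This reduces the lemma to producing, continuously in $q$, a function $\tilde q$ with $|\tilde q|=1$, $\tilde q\in X^{s+2}$ and $\Vert r-\tilde q\Vert_{H^s}\le c(E^0(q_0))\,E^s(q)$. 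The obvious candidate $r/|r|$ is inadmissible, because $r$ may vanish — already the black soliton $q_0=\tanh$ has $r$ odd, hence $r(0)=0$ — so the real content is to cross the zero set of $r$.

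Since $s+2>\tfrac12$, the function $r$ is continuous with $|r|\to 1$ at $\pm\infty$, so $Z:=\{\,|r|\le \tfrac12\,\}$ is compact, and the number of its connected components is controlled by $E^0(q)$. Off $Z$ I would simply set $\tilde q=r/|r|$; on each component $I_j=[\alpha_j,\beta_j]$ of $Z$ I would replace $r$ by a smooth unit-modulus arc joining the boundary phases $r(\alpha_j)/|r(\alpha_j)|$ and $r(\beta_j)/|r(\beta_j)|$, choosing the lift of the phase so that the global construction is consistent — the remaining ambiguity being a multiple of $2\pi$, which is exactly the covering-space indeterminacy in Theorem \ref{thm:E1} — and I would smooth the junctions at $\alpha_j,\beta_j$ so that $\tilde q\in X^{s+2}$. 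Continuity of $q\mapsto\tilde q$ on a small ball $B^{X^0}_\delta(q_0)$ then follows from the Lipschitz dependence of $r$ on $q$ together with the fact that, for $\delta$ small depending on $E^0(q_0)$, the endpoints $\alpha_j,\beta_j$ and their number vary continuously with $q$.

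For the estimate \eqref{q,tildeq} I would split $\Vert q-\tilde q\Vert_{H^s}\le \Vert q-r\Vert_{H^s}+\Vert r-\tilde q\Vert_{H^s}$, the first term being $\lesssim E^s(q)$ as above. On $Z^c$ we have $r-\tilde q=\tilde q\,(|r|-1)$ with $\bigl||r|-1\bigr|\lesssim \bigl||r|^2-1\bigr|$, and the $H^s$-analogue of the third line of \eqref{eq:estimate:r} (available since $r$ is the smooth regularisation) gives $\Vert\,|r|^2-1\,\Vert_{H^s}\le c(E^0(q_0))\,E^s(q)$. On $Z$ the defect satisfies $|r-\tilde q|\le 2$, and by Chebyshev's inequality the measure of $Z$ is controlled by $\int_Z\bigl||r|^2-1\bigr|^2\le \Vert\,|r|^2-1\,\Vert_{L^2}^2\lesssim c(E^0(q_0))^2 E^0(q)^2\le c(E^0(q_0))^2 E^s(q)^2$ (using $E^0\le E^s$ for $s\ge 0$). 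Designing the filling arc so that its phase variation does not exceed that of $r$ keeps $\Vert\d_x\tilde q\Vert_{L^2(I_j)}\lesssim \Vert\d_x r\Vert_{L^2(I_j)}$, which is what is needed to upgrade the $L^2$-control on $Z$ to $H^s$.

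The step I expect to be the main obstacle is exactly this last point: carrying the filling estimate from $L^2$ to $H^s$ uniformly in the small ball and for all $s\ge 0$, since narrow components $I_j$ force large derivatives of both $r$ and the interpolating arc, and one must argue that the arc's contribution is genuinely dominated by $\d_x r$ (hence by $E^s(q)$) rather than introducing extra oscillation. Once this is in place, the remaining claims are routine: if $\d_x q\in L^1$ then $\d_x r=h\ast\d_x q\in L^1$, whence $\d_x\tilde q\in L^1$ (it is $\lesssim|\d_x r|$ on $Z^c$ and bounded on the finitely many compact intervals $I_j$), and the same splitting, together with $\d_x q\in L^1$, yields $q-\tilde q\in L^1$.
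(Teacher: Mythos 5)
Your overall strategy (regularise by $r=\tau^2D_\tau^{-2}q$, keep the regularisation where it is bounded away from zero, and bridge the bad set by unit-modulus arcs) is the same as the paper's, but two of its steps fail as stated. The central one is your counting claim: the number of connected components of $Z=\{\,|r|\le\tfrac12\,\}$ is \emph{not} controlled by $E^0(q)$. The function $|r|$ may touch the level $\tfrac12$ tangentially, or dip below it by an arbitrarily small amount, infinitely many times at arbitrarily small energy cost; an energy-per-component bound only holds when each component forces a transition between two \emph{distinct} levels. This is exactly why the paper works with two thresholds: surgery is performed only on the finitely many intervals $I_k$ on which $|r|\ge\tfrac12$ in the outer thirds while $|r|<\tfrac14$ somewhere inside — each such interval costs a fixed amount of energy, since $\int_{I_k}|r'|^2\gtrsim|I_k|^{-1}$ and $\int_{I_k}(|r|^2-1)^2\gtrsim|I_k|$, which simultaneously bounds $K$ and bounds $|I_k|$ from below and above in terms of $E^0(q)$ — and elsewhere (in particular on components of $Z$ where $|r|$ stays above $\tfrac14$) one simply keeps $\tilde q=r$. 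That is admissible because the paper only constructs $|\tilde q|\ge\tfrac14$ and normalises $\tilde q\mapsto\tilde q/|\tilde q|$ at the very end; your insistence on $|\tilde q|=1$ from the outset forces surgery on every component of $Z$, of which there may be infinitely many, which also invalidates your ``finitely many compact intervals'' argument for the $L^1$ claims. Note, moreover, that the lower bound on $|I_k|$ is precisely what removes the ``narrow components'' obstacle you flag at the end and leave open: with the two-threshold selection, narrow surgery intervals do not occur, and the derivatives of the explicit interpolation \eqref{eq:interpo} of modulus and phase are controlled by constants depending only on $E^0(q_0)$ and $\tau$. In your single-threshold setup, the key $H^s$ estimate is genuinely unproved, and by your own account this is the crux.

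The second gap is continuity in $q$. Endpoints and the number of components of $\{\,|r|\le\tfrac12\,\}$ do not vary continuously with $q$: a tangential touching of the level $\tfrac12$ can be created or destroyed by an arbitrarily small perturbation. The paper avoids re-running the selection for each $q$: it \emph{freezes} the intervals $I_k$ constructed for the center $q_0$ and shows that for $q\in B^{X^0}_\delta(q_0)$, $\delta$ small, one still has $|r|\ge\tfrac14$ outside the inner thirds of those fixed intervals, so the same intervals serve for the whole ball. In addition, since $X^0$ is a quotient by $\mathbb{S}^1$, continuity requires a coherent choice of representatives and of phase lifts: the paper uses the gauge $\lambda(y)=e^{i\alpha(y)}$ realising $d^0(q,q_0)$ together with the Lipschitz bound \eqref{eq:lambda}, and pins $\theta(a_k),\theta(b_k)$ to within $\pi/2$ of the corresponding phases for $q_0$. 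Your remark that the remaining $2\pi$-ambiguity ``is exactly the covering-space indeterminacy'' does not dispose of this: different lifts produce genuinely different functions $\tilde q$ (with different winding across $I_k$), and Lemma \ref{lem:tildeq} asserts an actual continuous map on a ball; the indeterminacy modulo $2\pi$ only enters later, in Theorem \ref{thm:E1}, when comparing constructions based at different centers, not within one fixed construction.
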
  
For $ q \in 1+ \mathcal{S}$ we may choose $ \tilde q=1$ if we give up the estimate \eqref{q,tildeq}. 
We postpone the proof of Lemma \ref{lem:tildeq} and complete the proof of Theorem \ref{thm:E1}. 

 \begin{proof}  [Proof of Theorem \ref{thm:E1}]
We define for $q\in X^0$ with $\d_x q\in L^1$,
\[ \Theta = -\Im \int_{\R} \partial_x \log \tilde q\, \dx. \] 
Since $ \Theta$ is the winding number of $ x \to \tilde q(x)$ if $\d_xq \in L^1$, it is obvious that different choices lead to the same $ \Theta$, up to  $2\pi\Z$.

For $q\in X^s$,  $ s \ge \frac12 $ and $ \tilde q $ from Lemma \ref{lem:tildeq}, we define    $H_1$ as in  \eqref{eq:H1}:
\[ H_1(q) = \Im \int_{\R}  ( q- \tilde q ) \partial_x \bar q - \overline{\big( q- \tilde q \big)} \partial_x \tilde q dx.\]  
By Lemma \ref{lem:tildeq}, with the continuity of the maps 
\[ B^{X^0}_\delta(q_0)\cap X^s\ni q \to   q-\tilde q \in H^{1/2} \] 
and 
\[ B^{X^0}_\delta(q_0)\cap X^s\ni q \to  \d_x \tilde q  \in H^{-1/2},\]
the map $  q \to H_1(q)$ is continuous on $B_\delta^{X^0}(q_0) 
\cap X^s$.
Hence we obtain a unique continuous map for $ s\ge \frac12 $ 
\[ H_1(q): X^s \to  \R / (2\pi \Z). \] 
Conservation of $H_1(q)$ by the Gross-Pitaevskii flow has been proven at the end of Subsection \ref{subs:Results}.
\end{proof}

\begin{proof}[Proof of Lemma \ref{lem:tildeq}] 
The construction of the map $ q \to \tilde q$
is elementary but delicate.  It suffices to construct $ \tilde q $ with $ |\tilde q| \ge \frac14$  since then 
$ \frac{\tilde q}{|\tilde q|} $   has the same properties.

Let $\delta>0$ and $q_0 \in X^s$, $s\geq 0$.
Let  
 $r =  h \ast  q \in X^{s+2}$, $h=\frac{\tau}{2}e^{-\tau|x|}$  (as in Lemma \ref{lem:regular}) for some $\tau\geq 2  $ large  enough, so that $q$ in the unit ball $B_1^{X^0}(q_0)$ centered at $q_0$ are uniformly close to  its regularisation $r$:
 \begin{equation}\label{q,r,ball}
 \Vert q - r \Vert_{L^2} < \delta,\quad \forall  q \in B_1^{X^0}(q_0).\end{equation}  
 This is the only relevance of $\tau$ (which depends on $\delta$ and $E^0(q_0)$).
 We are going to construct $q\mapsto \tilde q$ for a single $q\in X^s$ first, and then to extend this construction $q\mapsto \tilde q$ continuously to the small ball $B^{X^0}_{\delta}(q_0)$ for some  $\delta<1$ small enough.
 \smallbreak
 
 \noindent\textbf{Step 1. Construction of $q\mapsto \tilde q$.}
 Let $q\in X^s$, $s\geq 0$, and $r=h\ast q\in X^{s+2}$, $h=\frac{\tau}{2}e^{-\tau|x|}$ for some $\tau\geq 2$.
 The set $\{ x: |r(x)| <\frac12\} $ is an at most countable union of open intervals.  In the complement we would like to  define $\tilde q=r$.
If $I$ is one of the intervals and $|r|\geq \frac14 $ on $I$, we also want to set $ \tilde q = r$ on $I$. 
There are at most \textit{finitely many} other intervals
\footnote{Indeed, the energy on the interval $J_k=(a_k, b_k)$   reads roughly as
$\int_{J_k} (|r|^2-1)^2+|r'|^2 \gtrsim |J_k|+\frac{1}{|J_k|}$, since $||r|-1||_{J_k}>\frac12$, and the derivative of $|r|$ on some subinterval $\tilde J_k=(a_k, c_k)\subset (a_k, b_k)$ with $|r(c_k)|=\frac14$ is comparable with $\frac{1}{|\tilde J_k|}$ and hence $\int_{\tilde J_k} (|r|')^2\gtrsim \frac{1}{|\tilde J_k|}\geq\frac{1}{|J_k|}$.
 Finite energy assumption implies the finiteness of the number of such intervals $J_k$, and the upper and lower bounds for its length $|J_k|$ in terms of the energy of $r$.}. The actual construction is more involved.
  
 There is a finite number $K$ (possibly $0$) bounded by a constant depending only on the $E^0(q)$ of disjoint intervals
 \begin{equation}\label{Ik}\begin{split}
 &   I_k=(a_k, b_k), \quad 0 < k\leq K<\infty,
 \\
 &\hbox{with  }|r(x)| \ge \frac12  \quad \text{ if }     \frac{b_k-a_k}6 \le  \big| x-\frac{a_k+b_k}2\big|  \le \frac{b_k-a_k}2  
 \\ 
& \hbox{ and }|r(x)|< \frac14\hbox{ for at least one  }x \in I_k,
\end{split}\end{equation} 
with a length $|I_k|$ bounded from below and above by a fixed constant depending only on $E^0(q)$, so that 
$|r(x)| \ge \frac12 $ if $ x\not\in \cup_{k=1}^K I_k$.
Notice that that   $|r|\geq\frac12$ also in the outer thirds of the interval $I_k$: $[a_k, a_k+\frac{b_k-a_k}{3}]\cup [b_k-\frac{b_k-a_k}{3}, b_k]$, while $|r|<\frac14$ on some subinterval in the inner third of the interval $I_k$: $[a_k+\frac{b_k-a_k}{3}, b_k-\frac{b_k-a_k}{3}]$.

We define $\tilde q = r$ on the complement of these intervals.  It remains to define $\tilde q$ on the intervals $I_k=(a_k,b_k)$. To do that we need some preparations. We fix a smooth monotone function $\phi$ identically $0 $ on $(-\infty, 1/3]$ and identically $1$ for $x \ge 2/3 $ and a smooth nonnegative function $ \eta$ supported in $(0,1)$, identically $1$ on $[1/3,2/3]$. We define 
\[ \phi_k(x) = \phi\big( \frac{x-a_k}{b_k-a_k} \big), \quad \eta_k(x) = \eta( \frac{x-a_k}{b_k-a_k} ). \] 
We write $r$ in polar coordinates in the outer thirds of the interval $I_k$, 
\[ r(x) = \rho(x)  e^{i \theta(x) } \quad \text{ on }  \left\{ \frac{b_k-a_k}6\le  \Big|x-\frac{b_k+a_k}2 \Big|\le \frac{b_k-a_k}2   \right\}  \]  
with $0 \le  \theta(b_k) -  \theta(a_k) < 2\pi $. 
{}
We  define  on $I_k$ 
\begin{equation} \label{eq:interpo} \begin{split}  \hspace{1cm} & \hspace{-1cm}   \tilde q(x) = \Big( (1-\eta_k(x)) \rho(x) + \eta_k(x) \big( \rho(a_k) + \phi_k(x)(\rho(b_k)-\rho(a_k)) \big)             \Big)  \\ &\times  \exp\Big( i \big(  (1-\eta_k(x)) \theta(x) + \eta_k(x) \big( \theta(a_k) + \phi_k(x) (\theta(b_k)-\theta(a_k) ) \big) \big)\Big). \end{split}  
\end{equation} 
Then $\tilde q\in X^{s+2}$ satisfies $|\tilde q|\geq\frac14$,  and
\begin{equation}\label{eq:polar}  
\|q-\tilde q\|_{H^s}\leq \|q-r\|_{H^s}+\|r-\tilde q\|_{H^s} \le c (E^0(q),\tau) E^s(q).    \end{equation}  

\smallbreak 

\noindent\textbf{Step 2. Construction of $q\mapsto \tilde q$ on $B^{X^0}_\delta(q_0)$.}
Now we want   to fix a continuous   choice of  $\tilde q$  \textit{locally} in a neighborhood of $q_0$.
 We do the previous construction   for $q_0$ and fix the chosen intervals $I_k=(a_k, b_k)$. Let $q \in B^{X^0}_\delta(q_0)$ with $\delta<1$ sufficiently small (to be determined later),  that is,
\[ d^0(q,q_0) = \left( \int_{\R}  \inf_{|\lambda|=1}  \Vert \sech(x-y) ( q(x)- \lambda q_0(x)) \Vert_{L^2_x(\R)}^2 dy \right)^{\frac12} < \delta. \]  
 Let $r=h\ast q$, $h=\frac{\tau}{2}e^{-\tau|x|}$  with $\tau$ large enough such that \eqref{q,r,ball} holds.

   In the construction of $\tilde q$ below  we are going to use some facts on the complete metric space $(X^s, d^s)$ in Theorem \ref{thm:Xs}, as well as some regularity results, which can   be found in \cite[ Section 6]{KL}.
Indeed, let $ \lambda(y) $ in the definition of $d^0(q,q_0)$ above be the constant at $y$. Then
  we have the following regularity for the function $\lambda$ (see also \cite[Lemma 6.4]{KL})
\begin{equation} 
\label{eq:lambda} |\lambda(x)-\lambda(y)| \le C \delta  |x-y|,\end{equation}  
for some constant $C$ depending on $E^0(q_0)$.
We write $\lambda$   as $\lambda(y) = \exp(i \alpha(y))$ for some real smooth function
$\alpha$ with derivatives bounded by $C\delta$. 
We fix a point $x_0$ such that  $|r(x)| \ge \frac12 $ on $x \in (x_0-1,x_0+1) $ and add a multiple of $2\pi$ so that \[ |\alpha(x_0)| < \frac{\pi}2. \] 
With this choice the function $ \alpha$ is uniquely determined. 
Then, with a possibly different constant,   we derive from \eqref{q,r,ball} that
\[ \int_{\R} \Vert \sech(x-y) ( r(x)-e^{i\alpha(x)} r_0(x)) \Vert_{L^2_x}^2 dy \le C \delta^2.\] 
We take a small enough $\delta$, such that (recalling by the construction in Step 1, $|r_0|\geq \frac12$ on the outer thirds of the interval $I_k$)
\[  | r(x) | \ge \frac14 \qquad \text{ for } x \in \R \backslash \bigcup_k 
\Big( a_k+\frac13(b_k-a_k), b_k-\frac13 (b_k-a_k)\Big).  \] 
We define 
\[ \tilde q(x) = r(x) \quad \text{ in } \R \backslash \bigcup_k 
( a_k, b_k)  \] 
and write $r$ again in polar coordinates in outer thirds of the intervals $I_k$
\[  r(x) = \rho(x) \exp( i \theta(x) ). \] 
On each side $ \theta$ is uniquely defined up to the addition of a multiple of $2\pi$. 
We choose the multiples of $2\pi$ so that
\[ |\theta(a_k) - \theta_0(a_k)|, |\theta(b_k)-\theta_0(b_k) | < \pi/2 \] 
which we can do by choosing $ \delta $ sufficiently small. 

With this choice we define $\tilde q$ by \eqref{eq:interpo} inside the intervals $(a_k, b_k)$. Then $|\tilde q|\geq\frac14$ and $\tilde q\in X^{s+2}$.
Since the parameter $\tau$ depends only on  $E^0(q_0)$, the estimate \eqref{q,tildeq} follows as in \eqref{eq:polar}.
By the construction, $ \tilde q' \in L^1$ if $ q' \in L^1$.
\end{proof}

\setcounter{equation}{0}
\section{The transmission coefficient} \label{sec:T}
In this section we will introduce the (renormalized) transmission coefficient   associated to the Gross-Pitaevskii equation, give and analyze its asymptotic expansion, and finally prove  Theorem \ref{thm:energies}.

We will first explain the Lax-pair  formulation $(L,P)$ of the Gross-Pitaevskii equation in Subsection \ref{subs:Lax}.
In the classical framework $q\in1+\cS$,
we will introduce the   transmission coefficient $T^{-1}(\lambda)$ associated to the Lax operator  
$$L=\left(\begin{matrix}
i\d_x&-iq \\ i\ov q&-i\d_x
\end{matrix}\right),$$
 where the spectral parameters $(\lambda, z)\in \cR$  will be defined in \eqref{cR}  below.

Recall the regularization $r$ of $q$ in Lemma \ref{lem:regular}, such that 
$$
\||r|^2-1\|_{L^2}, \, \|\d_x r\|_{L^2},\, \|r\|_{L^\infty}
$$
can be bounded in terms of the (rescaled) energy norm $E^0(q)$.
Using this regularization $r$ we will in Subsection \ref{subsec:renormalized} approximately diagonalize the spectral equation of the Lax operator $Lu=\lambda u$ as
$$
\tilde L v=z\,v,
\hbox{ with }\tilde L=\begin{pmatrix}
i\d_x +iq_1 & -iq_2\\ iq_3& -i\d_x+i(q_4-q_1)
\end{pmatrix}
$$
 where the elements 
 $ q_j$, $j=1,2,3,4
 $ are formulated in terms of $q,r$ (and $\lambda$) explicitly, such that
 $$
 \|q_j\|_{L^2},\quad j=1,2,3,4
 $$
 can also be bounded in terms of the (rescaled) energy norm $E^0(q)$.

\smallbreak 
We will then in Subsection \ref{subsec:T2n} solve the renormalized Lax equation
\begin{align*}
   & \tilde{\tilde L}w=\begin{pmatrix}
    0&0\\ 0&2z\end{pmatrix}w,
    \hbox{ with }\tilde{\tilde L}=\begin{pmatrix}
    i\d_x&-i q_2\\ iq_3&-i\d_x+iq_4
    \end{pmatrix}
    \\
&    \hbox{ provided with the  initial condition  }\lim_{x\rightarrow-\infty}w(x)=\begin{pmatrix}
    1\\0
    \end{pmatrix}
\end{align*}
by Picard iteration.
Observe in the classical setting $q\in 1+\cS$ that the limit of the first component of the solution $w^1$ at infinity is related to the transmission coefficient $T^{-1}(\lambda)$ as $
    \lim_{x\rightarrow\infty}w^1(x)=e^{\int_{\R}q_1\dx} T^{-1}(\lambda)$, such that we have the asymptotic expansion of the transmission coefficient (up to the correction in terms of $q_1$) as follows
    $$
    e^{\int_{\R}q_1\dx} T^{-1}(\lambda)=\sum_{n=0}^\infty T_{2n},
    $$
   where $T_{2n}$ are $2n$ dimensional integrals essentially from the Picard iteration for fixed $ \lambda$. They are holomorphic functions in  $ \lambda $ for $ \lambda \in \C\backslash \bigl( (-\infty, -1]\cup [1,\infty)\bigr)$ and they depend on the choice of $r$ which we have fixed. 
   We will also estimate the terms $T_{2n}$, $n\geq 2$   and  their sum
 in Subsection \ref{subsec:T2n}, which  yields an estimate for the following difference for $q\in X^0$:
 \[\left | \ln \Bigl( \sum_{n=0}^\infty  T_{2n} \Bigr) - T_2\right |
 =\left | \ln T^{-1}(\lambda)+\int_{\R}q_1\dx  - T_2\right |. \]

   We will simplify $T_2$ up to tolerable cubic error terms (denoted by $O_\tau (\cE^3)$)  in Subsection \ref{subsec:T2}, which is the main result in this section.
   Correspondingly we will rewrite the corrected term $T_2+\Phi$ in Subsection \ref{subsec:T2,Phi} as
   $$
   T_2+\Phi=A+B+O_\tau (\cE^3),
   $$
   where  the   correction function $\Phi$ is given in \eqref{Phi} (with $\cM, \Theta$ denoting the mass and the asymptotic phase change respectively):
\begin{equation}\label{Phi,q1,Theta}
    \Phi=- \int_{\R}q_1\dx  -\frac{i}{2z}\cM-\frac{i}{2z(\lambda+z)}\Theta.
\end{equation} 
Both the terms $A,B$ are well-defined for $q\in X^0$, and we will precise them. We recall that $ \Theta$ depends on the choice of $ \tilde q$ and can be considered as analytic function in $q$ for $q\in X^{0}$ with $\d_x q\in L^1$ modulo $2\pi \mathbb{Z}$, or as an analytic function on the universal covering space. 
  
  \smallbreak
   
  To conclude, for $q\in X^0$, we  will in Subsection \ref{subsec:conclusion} define our renormalized transmission coefficient 
 \begin{equation} \label{eq:tc} 
 \begin{split} 
 &\Tc^{-1}(\lambda)\,  
  =e^{  \Phi(\lambda)} \Big(\sum_{n=0}^\infty  T_{2n}(\lambda)\Bigr)
  \\
&  \hbox{ that is, }\ln\Tc^{-1}(\lambda)=\bigl( \Phi+T_2\bigr)
  +\Bigl( \ln\bigl(\sum_{n=0}^\infty T_{2n}\bigr)-T_2\Bigr).
 \end{split} 
\end{equation}  
It is indeed   $T_c^{-1}:=   T^{-1}(\lambda) \exp\Big(- \frac{i\mathcal{M}}{2z} - \frac{i\Theta}{2z(\lambda + z ) }    \Big) $ if $q\in 1+\cS$.     
We will then complete the proof of Theorem \ref{thm:energies}.
Recall that we   always take $(\lambda, z)\in \cR$ (see \eqref{cR} below, and in particular $ z  = \sqrt{\lambda^2-1 }$ has positive imaginary part). 
We  restrict ourselves  to  the case $\Im\lambda\geq 0$ and 
$$
\tau:=2\Im z\geq 2
$$
from Subsection \ref{subsec:T2n} to Subsection \ref{subsec:T2,Phi}, to simplify the presentation.
We will consider other cases for $\lambda\in \C\backslash{(-\infty, -1]\cup [1,\infty)}$ and make the conclusions in Subsection \ref{subsec:conclusion}. 

\subsection{The Lax-Pair and the transmission coefficient}\label{subs:Lax}
The Gross-Pitaevskii equation \eqref{GP} is completely integrable   by means of the  inverse scattering method. It can be viewed as the compatibility condition for the following two ODE systems (see  Zakharov-Shabat \cite{ZS73})
\begin{equation}\label{LaxPair}\begin{split}
&  u_x = \left( \begin{matrix}
 -i\lambda &  q \\  \bar q & i \lambda 
 \end{matrix} \right) u ,
 \\
 & u_t=i\left(\begin{matrix}
 -2\lambda^2- (|q|^2-1) & -2i\lambda q+ \d_x q
 \\
 - 2i\lambda \ov q- \d_x \ov q & 2\lambda^2+ (|q|^2-1)
 \end{matrix}\right) u,
 \end{split}\end{equation}
 where  $u=u(t,x):\R\times\R\mapsto \C^2$ is the unknown vector-valued solutions.
Equivalently,  the Gross-Pitaevskii equation can formally be reformulated in the   Lax-pair $(L,P)$ form
$$
L_t=PL-LP,
$$
where $L$ is the self-adjoint Lax operator 
\begin{equation}\label{LaxOp}
L=\left(\begin{matrix}
i\d_x&-iq \\ i\ov q&-i\d_x
\end{matrix}\right),
\end{equation} 
and $P$ is the following skewadjoint differential operator 
\begin{equation*}
P=i\left(\begin{matrix}
2\partial_x^2-(|q|^2-1)&-q\partial_x-\partial_x q
 \\
  \bar q\partial_x+\partial_x\bar q & -2\partial_x^2+(|q|^2-1)
\end{matrix}\right).
\end{equation*} 
In the above, the first system in \eqref{LaxPair} reads as the spectral problem $Lu=\lambda u$  and the righthand side of the second system of \eqref{LaxPair} reads  as $Pu$. 
The operators $L(t)$ and $L(t')$ at different times are related by the unitary family $U(t',t)$ generated by the skewadjoint operator $P$ as 
$$
L(t)=U^\ast(t', t)L(t')U(t',t),
$$
and the spectra of the Lax operator $L$ is formally invariant by time evolution.
This inverse scattering transform relates the evolution of the Gross-Pitaevskii flow to the spectral property of the Lax operator $L$. See  \cite{AKNS, CJ, DZ, DPVV, FT, GZ, ZS73} for the study between the potential $q$ and the spectral information of $L$.  

If $q-1$ is Schwartz function, then by  \cite{DPVV, FT}, the self-adjoint operator $L$ has essential spectrum $\cI=(-\infty, -1]\cup [1,\infty)$ and  finitely many \emph{simple real} eigenvalues $\{\lambda_m\}$  in $(-1,1)$. 
We are going to define its  transmission coefficient 
$$T^{-1}(\lambda)
\hbox{ such that }
|T^{-1}(\lambda)|\big|_{\lambda\in \cI}\geq 1,\quad
T^{-1}(\lambda_m)=0,$$
by solving the spectral problem  of the Lax operator $Lu=\lambda u$, i.e. the ordinary differential equation $  u_x =  \begin{pmatrix}
 -i\lambda &  q \\  \bar q & i \lambda 
 \end{pmatrix} u$ in \eqref{LaxPair}. 
 
  We consider the spectral problem on the Riemann surface 
 $  \{ ( \lambda, z) \,|\, z^2 = \lambda^2-1\}$. 
More precisely, we first notice that if $q=1$ then the matrix $ \begin{pmatrix}
 -i\lambda & 1 \\ 1 & i \lambda 
 \end{pmatrix} $
 has eigenvalues 
 $$
iz\hbox{ and }-iz,\quad \hbox{ with }z=z(\lambda)=\sqrt{\lambda^2-1},
 $$
together with the corresponding eigenvectors
 $$
 \begin{pmatrix}1\\i(\lambda+z)\end{pmatrix}
 \hbox{ and }\begin{pmatrix}1\\i(\lambda-z)\end{pmatrix}.
 $$
In the following of this paper we will always take $(\lambda, z)$  on the upper  sheet of the Riemann surface
\begin{equation}\label{cR}\begin{split}
&\mathcal{R}=\bigl\{(\lambda,z)\,|\, \lambda\in \mathcal{V},
\quad 
z=z(\lambda)=\sqrt{\lambda^2-1}\in \mathcal{U}\bigr \}, 
\\
&\hbox{where }\mathcal{V}:=\C\setminus \cI,\quad \cI=(-\infty, -1]\cup [1,\infty),
\\
&\hbox{and }\mathcal{U}:=\{z\in \C\,| \, \Im z>0\}\hbox{ is the open upper complex plane.}
\end{split}\end{equation}  
The conformal mapping
\begin{equation}\label{zeta}\zeta=\zeta(\lambda)=\lambda+z\end{equation}
maps from $(\lambda,z)\in \cR$ to $\zeta\in\cU$
and has an inverse mapping  $\zeta\mapsto\lambda=\lambda(\zeta)=\frac 12(\zeta+\frac{1}{\zeta})$.

In the  classical framework $q-1\in \cS$, as $q\rightarrow 1$ at infinity, we solve indeed the  boundary value problem of the above ODE with respect to the space variable $x\in \R$ in \eqref{LaxPair}:
\begin{equation}\label{Lax} 
 u_x 
 = \left( \begin{matrix} -i\lambda & q \\ \bar q & i \lambda \end{matrix} \right) u, 
 \quad 
 u
= e^{-iz x }
\left(\begin{matrix}
1 \\ i(\lambda-z)
\end{matrix}\right)
+o(1)e^{(\Im z) x}
\hbox{ as }x\rightarrow -\infty.
 \end{equation} 
We define the transmission coefficient $T^{-1}(\lambda)$ on $\cR$ by the asymptotic behavior of the (Jost) solution $u$ of \eqref{Lax} at infinity
\begin{equation}\label{Asymul}\begin{split} 
&u
= e^{-iz x} T^{-1} (\lambda)
\left(\begin{matrix}
  1
\\ 
i(\lambda-z)
\end{matrix}\right)
+o(1) e^{(\Im z) x}
\hbox{ as }x\rightarrow +\infty.
\end{split}\end{equation}
Then
 $ T^{-1}(\lambda) $
 is a holomorphic function on $\mathcal{R}$,  $\lim_{|\lambda|\rightarrow \infty} T^{-1}(\lambda)=1$ and
\begin{equation}\label{T:symm}
\ov T^{-1}(\lambda)=T^{-1}( \ov \lambda),
\quad \hbox{ for } (\lambda, z),\,( \ov \lambda, -\ov z)\in\mathcal{R}.
\end{equation}
We multiply the above   solution $u$ by $e^{-2iz\lambda t}$ such that it solves also the time evolutionary equation in \eqref{LaxPair} and $\d_tT^{-1}(\lambda)=0$.
Hence the transmission coefficient $T^{-1}(\lambda)$ is conserved by the Gross-Pitaevskii flow.

The logarithm of the transmission coefficient has the asymptotic expansion \eqref{lnT:FT0} if $q\in 1+\cS$.

\subsection{Approximate diagonalization of the  Lax equation: Estimates for \texorpdfstring{$q_j$}{qj}}\label{subsec:renormalized}

Let $(\lambda, z)\in \cR$ (defined in \eqref{cR}), such that 
\begin{equation}\label{tau,2}
\Im\lambda\geq 0,\quad \tau:=2\Im z\geq  2.
\end{equation} 
In this situation,  $\zeta=\lambda+z\in \cU$ satisfies $\Im\zeta=\Im\lambda+\Im z\in [\Im z, 2\Im z)\subset\R^+$.

Let $q\in X^0$, and let $r\in X^2$ be given in Lemma \ref{lem:regular}.
We will diagonalize the   Lax equation \eqref{Lax} for the Jost solution $u$ into the ordinary differential equations \eqref{eq:approxdiag} for $v$ below, where the elements $q_j$, $j=1,2,3,4$ will be estimated in Lemma \ref{lem:qj} afterwards. 

A straightforward calculation shows that  
 \[\begin{split} \hspace{.1cm} & \hspace{-1cm}  \frac1{|r|^2-\zeta^2} 
\Bigl[  \left(\begin{matrix} -i\zeta &  r \\
                          \bar r & i\zeta \end{matrix} \right) 
 \left( \begin{matrix} -i\lambda  & q \\ \bar q & i\lambda \end{matrix} \right) 
 -\left( \begin{matrix} |r|^2-1  & 0 \\ 0 & |r|^2-1 \end{matrix} \right) \Bigr]
\left(\begin{matrix} -i\zeta & r \\ \bar r &i\zeta   \end{matrix} \right) 
  \\ & =
 \left( \begin{matrix} -iz & 0 \\ 0 & iz \end{matrix} \right)
+ \frac1{|r|^2-\zeta^2}  \left(\begin{matrix} -i\zeta &  r \\
                          \bar r & i\zeta \end{matrix} \right) 
 \left( \begin{matrix} 0  & q-r \\ \bar q-\bar r & 0  \end{matrix} \right) 
\left(\begin{matrix} -i\zeta & r \\ \bar r &i\zeta   \end{matrix} \right) 
\\ & = \left( \begin{matrix} -iz & 0 \\ 0 & iz \end{matrix} \right)
+  \frac1{|r|^2-\zeta^2}\left( \begin{matrix} -i \zeta
  \big[\bar r (q-r) + r (\bar q -\bar r) \big]  &r^2(\bar q-\bar r) + \zeta^2 (q-r) \\  \zeta^2(\bar q -\bar r ) + \bar r^2(q-r)  &   i\zeta \big[ r (\bar q-\bar r)+ \bar r (q-r) \big]  \end{matrix} \right).
 \end{split} \] 
If $u$ satisfies the Lax equation (the first equation in \eqref{LaxPair}) then   we take
\begin{equation}\label{u,v}
v= \left( \begin{matrix} -i\zeta & r \\ \bar r & i\zeta \end{matrix} \right)u,
\hbox{ or equivalently,  }u=\frac1{|r|^2-\zeta^2} 
\left( \begin{matrix}-i\zeta & r \\ \bar r & i\zeta \end{matrix} \right)v, 
\end{equation}
such that $v$ solves
\begin{equation} \label{eq:approxdiag}  
\begin{split} 
v_x = & \frac1{|r|^2-\zeta^2}\Big[ 
\left( \begin{matrix}-i\zeta & r \\ \bar r & i\zeta \end{matrix} \right)
 \left( \begin{matrix} -i\lambda & q \\ \bar q  & i\lambda \end{matrix} \right) \left( \begin{matrix} -i\zeta   & r \\ \bar r & i\zeta \end{matrix} \right)     
 +\left( \begin{matrix} 0 & r' \\ \bar r' & 0 \end{matrix} \right) 
\left( \begin{matrix} -i\zeta & r \\ \bar r &i\zeta  \end{matrix} \right) \Big] v 
\\
= &   \left( \begin{matrix} -iz &0 \\ 0 & iz \end{matrix} \right) v
+ 
\left( \begin{matrix} -q_1    
&q_2   
 \\ q_3
  &q_4-q_1  \end{matrix} \right)v,
\end{split}
\end{equation} 
where $q_j$, $j=1,2,3,4$ are given  by
\begin{equation}\label{eq:q1234}\begin{split}
q_1\, & =\frac{ i\zeta \big[ |r|^2-1 + 2\Re (\bar r (q-r)) \big]  -  \bar r r'}{|r|^2-\zeta^2},
\\
q_2\, & =\frac{  r(|r|^2-1) + i \zeta r' + r^2(\bar q-\bar r)  +\zeta^2 (q-r)  }{|r|^2-\zeta^2}
\\ &  = \frac{  r\big[ |r|^2-1+ 2 \Re (\bar r(q-r))\big]  + i \zeta r'} {|r|^2-\zeta^2}- (q-r) ,
\\
q_3\, & =\frac{  \bar r (|r|^2-1) - i \zeta \bar r' + \bar r^2( q- r)+ \zeta^2 (\bar q -\bar r)  }{|r|^2-\zeta^2}
\\ & =\frac{  \bar r \big[ |r|^2-1+ 2 \Re ( \bar r(q-r)) \big]  - i \zeta \bar r'  }{|r|^2-\zeta^2}- ( \overline{q-r}) 
\\
q_4\, & =\frac{2i\zeta \big[ |r|^2-1+ 2 \Re (\bar r (q-r))\big]     + 2i \Im (r \bar r')  }{|r|^2-\zeta^2}.
\end{split}\end{equation}

\begin{lem} \label{lem:qj} The map $(2,\infty) \ni  \tau  \to E^s_\tau(q)$, $s\in [0,1]$ is monotonically decreasing. 

Let $(\lambda, z)\in \cR$  with $\Im\lambda\geq 0$ and $\Im z=\frac{\tau}{2}$, $2\leq  \tau_0 \le  \tau$.
Let  $q\in X^0$ with   $ E^0_\tau(q) \le \tau^{1/2}$.
Let  $r$ and $q_j$'s  be given in Lemma \ref{lem:regular} and  \eqref{eq:q1234} respectively. 
Then  we have the following  estimates for $q_j$, $j=1,2,3,4$:
\begin{equation}\label{eq:qr}\begin{split} 
  \Vert q_j \Vert_{L^p} 
  &\lesssim  \frac1{\tau}  \Vert (|r|^2-1, r') \Vert_{L^p}   +  \Vert q-r \Vert_{L^p},
  \quad\forall p\in [1,\infty],
\end{split} \end{equation} 
and 
\begin{equation}\label{eq:qrp}\begin{split} 
 \Vert q_j \Vert_{L^p}\leq C_p \|(|q|^2-1, q')\|_{W^{-1,p}_\tau} \leq C_p \tau^{-1} E^{1+\frac12-\frac1p}_{\tau_0}  (q),\quad \forall p\in [2,\infty),
\end{split}\end{equation} 
where in particular
\begin{equation}\label{eq:qr2}\begin{split} 
  \Vert q_j \Vert_{L^2}  
&\le c 
E^{0}_\tau(q)\leq cE^0_{\tau_0} (q).   
\end{split}  \end{equation}  
\end{lem}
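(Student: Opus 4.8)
The plan is to dispose of the monotonicity statement first and then prove the three displayed bounds in the order $\eqref{eq:qr}$, $\eqref{eq:qrp}$, $\eqref{eq:qr2}$, the last being the endpoint $p=2$ of the middle one. For the monotonicity of $\tau\mapsto E^s_\tau(q)$ on $s\in[0,1]$ I would pass to the Fourier side: by Plancherel,
\[
(E^s_\tau(q))^2=\int_\R (\xi^2+\tau^2)^{s-1}\Bigl(\bigl|\widehat{|q|^2-1}(\xi)\bigr|^2+\bigl|\widehat{q'}(\xi)\bigr|^2\Bigr)\,d\xi ,
\]
and since $s-1\le 0$ the weight $(\xi^2+\tau^2)^{s-1}$ is nonincreasing in $\tau$ for every fixed $\xi$; hence so is the integral. (The same computation shows the norm increases in $\tau$ once $s>1$, which is exactly why the monotonicity is asserted only for $s\le 1$.)

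The heart of the matter is the pointwise estimate $\eqref{eq:qr}$, and everything rests on a lower bound for the denominator $|r|^2-\zeta^2$ occurring in every entry of $\eqref{eq:q1234}$. I would first establish
\[
\bigl||r|^2-\zeta^2\bigr|\ \gtrsim\ (\Im\zeta)\,|\zeta|\ \gtrsim\ \tau\,|\zeta|\ \gtrsim\ \tau^2 ,
\]
using only $\Im\zeta\ge\Im z=\tau/2$ (recorded in the text, together with $|\zeta|\ge\Im\zeta$) and $|r|^2\ge 0$. The argument is a dichotomy on the size of $\Re\zeta$. If $|\Re\zeta|\ge\tfrac12|\zeta|$ then the imaginary part already suffices, since $\bigl|\Im(|r|^2-\zeta^2)\bigr|=2|\Re\zeta|\,\Im\zeta\gtrsim|\zeta|\,\Im\zeta$. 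If instead $|\Re\zeta|<\tfrac12|\zeta|$ then $(\Im\zeta)^2>\tfrac34|\zeta|^2$, and the real part does the job: $\Re(|r|^2-\zeta^2)=|r|^2-(\Re\zeta)^2+(\Im\zeta)^2\ge(\Im\zeta)^2-(\Re\zeta)^2\gtrsim|\zeta|^2$, where it is crucial that the unknown term $|r|^2\ge 0$ only helps. I expect this to be the main obstacle, because it is the one place where the geometry of $(\lambda,z)\in\cR$ and the normalization $\Im z=\tau/2$ must genuinely be exploited, and the bound has to survive uniformly even when $|r|$ is as large as $\sim\tau$ or $|\zeta|$ is arbitrarily large.

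Granting the denominator bound, $\eqref{eq:qr}$ follows by reading the numerators of $\eqref{eq:q1234}$ term by term, together with two facts from Lemma \ref{lem:regular}: $\Vert r\Vert_{L^\infty}\lesssim\tau$ (valid under the smallness hypothesis $E^0_\tau(q)\le\tau^{1/2}$) and the pointwise bounds $|\Re(\bar r(q-r))|\le|r|\,|q-r|$, $|\Im(r\bar r')|\le|r|\,|r'|$. The terms carrying a factor $\zeta$ (such as $\zeta(|r|^2-1)$ and $\zeta r'$) are divided by $|r|^2-\zeta^2\gtrsim\tau|\zeta|$, producing the gain $\tau^{-1}$; the terms carrying a factor $r$ or $\bar r$ (such as $r(|r|^2-1)$ and $\bar r r'$) are divided by $|r|^2-\zeta^2\gtrsim\tau^2$ and combined with $|r|\lesssim\tau$ to the same effect; and the mixed terms $\zeta\,r\,(q-r)/(|r|^2-\zeta^2)$ are absorbed into $|q-r|$ because $|\zeta|\,|r|/\bigl||r|^2-\zeta^2\bigr|\lesssim|r|/\tau\lesssim 1$. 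Taking $L^p$ norms for $p\in[1,\infty]$ gives $\eqref{eq:qr}$.

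To pass from $\eqref{eq:qr}$ to $\eqref{eq:qrp}$ I would insert the remaining estimates of Lemma \ref{lem:regular}: the first line of $\eqref{eq:estimate:r}$ bounds $\Vert q-r\Vert_{L^p}+\tau^{-1}\Vert r'\Vert_{L^p}$ by $\Vert q'\Vert_{W^{-1,p}_\tau}$, and the third line (with $1+\tau^{-1/2}E^0_\tau(q)\le 2$) bounds $\tau^{-1}\Vert|r|^2-1\Vert_{L^p}$ by $\Vert(|q|^2-1,q')\Vert_{W^{-1,p}_\tau}$; summing these gives the first inequality of $\eqref{eq:qrp}$. The second inequality is a Sobolev-type conversion from $W^{-1,p}_\tau$ to an $L^2$-based energy: the scale-invariant one-dimensional embedding $\Vert g\Vert_{L^p}\lesssim_p\Vert D_\tau^{\frac12-\frac1p}g\Vert_{L^2}$ ($p\in[2,\infty)$, hence uniform in $\tau$), applied to $g=D_\tau^{-1}f$ with $f=(|q|^2-1,q')$, turns $\Vert f\Vert_{W^{-1,p}_\tau}$ into $\Vert D_\tau^{-\frac12-\frac1p}f\Vert_{L^2}$; extracting one factor $D_\tau^{-1}$ (of $L^2$-operator norm $\le\tau^{-1}$) supplies the prefactor $\tau^{-1}$ and raises the remaining energy norm to order $1+\tfrac12-\tfrac1p$, which the monotonicity from the first step then lets me place at $\tau_0$, yielding $\Vert q_j\Vert_{L^p}\lesssim_p\tau^{-1}E^{1+\frac12-\frac1p}_{\tau_0}(q)$. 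Finally $\eqref{eq:qr2}$ is the endpoint $p=2$ of this chain, where $W^{-1,2}_\tau=H^{-1}_\tau$ incurs no Sobolev loss, so that $\Vert q_j\Vert_{L^2}\lesssim E^0_\tau(q)\le E^0_{\tau_0}(q)$ directly by the monotonicity.
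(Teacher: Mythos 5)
Most of your proposal runs along the same lines as the paper's proof: the monotonicity statement is exactly the paper's Fourier-side observation; your dichotomy argument gives the lower bound $\bigl||r|^2-\zeta^2\bigr|\gtrsim \Im\zeta\,|\zeta|\gtrsim\tau|\zeta|\gtrsim\tau^2$, a slightly weaker (but, combined with $\Vert r\Vert_{L^\infty}\lesssim\tau$, still sufficient) substitute for the paper's bound \eqref{eq:lower}, $\bigl||r|^2-\zeta^2\bigr|\gtrsim|\zeta|^2+|r|^2$, which itself uses $|r|\lesssim\tau$; the term-by-term reading of \eqref{eq:q1234} then gives \eqref{eq:qr}, and the first inequality of \eqref{eq:qrp} together with \eqref{eq:qr2} follows from Lemma \ref{lem:regular} exactly as in the paper.

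There is, however, a genuine gap in your derivation of the second inequality of \eqref{eq:qrp}. You apply the Sobolev embedding at scale $\tau$, arriving at $\Vert q_j\Vert_{L^p}\lesssim\tau^{-1}E^{1+\frac12-\frac1p}_{\tau}(q)$, and then invoke ``the monotonicity from the first step'' to replace $E^{1+\frac12-\frac1p}_{\tau}(q)$ by $E^{1+\frac12-\frac1p}_{\tau_0}(q)$. But the monotonicity you proved holds only for $s\in[0,1]$, whereas here $s=1+\frac12-\frac1p>1$ as soon as $p>2$. For $s>1$ the weight $(\xi^2+\tau^2)^{s-1}$ is \emph{increasing} in $\tau$, so $\tau\mapsto E^s_\tau(q)$ is increasing and the inequality $E^s_\tau(q)\le E^s_{\tau_0}(q)$ for $\tau\ge\tau_0$ is false in general: the ratio of the weights at $\xi=0$ is $(\tau/\tau_0)^{2(s-1)}$, which is unbounded, and the best one can extract this way is $E^s_\tau(q)\le(\tau/\tau_0)^{s-1}E^s_{\tau_0}(q)$, losing a factor $(\tau/\tau_0)^{\frac12-\frac1p}$ relative to \eqref{eq:qrp}. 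The paper avoids this by reordering the steps: first pull out the full factor $\tau^{-1}$ at the $L^p$ level, $\Vert f\Vert_{W^{-1,p}_\tau}=\Vert D_\tau^{-1}f\Vert_{L^p}\le C_p\,\tau^{-1}\Vert f\Vert_{L^p}$ (the symbol $\tau(\xi^2+\tau^2)^{-1/2}$ is a Mikhlin multiplier uniformly in $\tau$); then apply Sobolev at the \emph{fixed} scale, $\Vert f\Vert_{L^p}\le C_p\Vert f\Vert_{H^{\frac12-\frac1p}}$ with $D=D_2$, which yields $\tau^{-1}E^{1+\frac12-\frac1p}(q)=\tau^{-1}E^{1+\frac12-\frac1p}_{2}(q)$; and only then use monotonicity \emph{upward} from the fixed scale $2$ to $\tau_0$, namely $E^{s}_{2}(q)\le E^{s}_{\tau_0}(q)$, which is valid precisely because $s\ge1$ and $2\le\tau_0$. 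With this reordering your argument closes; as written, the step fails for every $p>2$.
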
 
\begin{proof} 
Monotonicity of $ \tau \to \Vert f \Vert_{H^{s-1}_\tau} $, $s\in [0,1]$ is obvious after a Fourier transform.

We recall that $ |r| \lesssim \tau $ if $ E^0_\tau(q) \le \tau^{1/2}$ by Lemma \ref{lem:regular} and observe that  $\Im\zeta\in [\frac{\tau}{2}, \tau)$. Thus   
\begin{equation} \label{eq:lower}  ||r|^2- \zeta^2|= | |r|^2 - (\Re \zeta)^2+ (\Im \zeta )^2- 2i \Im \zeta \Re \zeta|   \gtrsim     |\zeta|^2 + |r|^2 . \end{equation}  
The   inequality \eqref{eq:qr} is an immediate consequence of the structure of the $q_j$.


We use   the estimates \eqref{eq:estimate:r} in Lemma \ref{lem:regular} to derive from \eqref{eq:qr} that 
\begin{equation*}\begin{split} 
  \Vert q_j \Vert_{L^p} 
 \le C_p (1+\tau^{-1/2} E^0_\tau(q))    \Vert ( |q|^2-1, q') \Vert_{W^{-1,p}_\tau},
 \quad \forall p\in [2,\infty),
\end{split} \end{equation*} 
and hence \eqref{eq:qr2} holds.
We use further the estimate
\begin{equation} \label{eq:interW} \Vert f \Vert_{W^{s,p}_\tau} \lesssim \tau^{s-\sigma} \Vert f \Vert_{W^{\sigma,p}_\tau} \end{equation}  
whenever $ s,\sigma \in \R$, $ s \le \sigma$ and 
the Sobolev inequality $ \Vert f \Vert_{L^p} \le C_p \Vert f \Vert_{H^{\frac12-\frac1p}}$, $p\in [2,\infty)$,  to derive \eqref{eq:qrp} by the following inequality and the trivial bound $E^{1+\frac12-\frac1p} (q)\leq E^{1+\frac12-\frac1p}_{\tau_0} (q)$, $\tau_0\geq 2$,
\begin{equation*}\begin{split} 
 \|q_j\|_{L^p}\leq C_p  \Vert ( |q|^2-1, q') \Vert_{W^{-1,p}_\tau}  
  \le C_p \tau^{-1}    \Vert ( |q|^2-1, q')\Vert_{L^p}  
  \le C_p \tau^{-1} E^{1+\frac12-\frac1p} (q). 
\end{split} \end{equation*} 
The proof of Lemma \ref{lem:qj} is complete.
\end{proof}

\subsection{The  renormalized Lax equation: Estimates for \texorpdfstring{$T_{2n}$, $ n \ge 2$}{Tn}.}\label{subsec:T2n}
Recall the (approximately) diagonalized  Lax equation \eqref{eq:approxdiag}-\eqref{eq:q1234} for the unknown vector-valued function $v$, which is related to the original Jost solution $u$ by the transformation \eqref{u,v}.
The Jost solution $u$ satisfies  the original Lax equation \eqref{Lax} and has the asymptotic behaviour \eqref{Asymul}.

Let  
\begin{equation}\label{u-w} 
w=-\frac{1}{2iz}e^{izx+ \int\limits_{0}^x q_1 \dm} v
=-\frac{1}{2iz}e^{izx+\int\limits_{0}^x q_1 \dm}
\left( \begin{matrix} -i\zeta & r \\ \bar r & i\zeta \end{matrix} \right)u,
\end{equation}
then it satisfies the renormalized ODE  (of the original ODE \eqref{Lax} for $u$)
\begin{equation} \label{ReLax} 
\begin{split} 
w_x = &     \left( \begin{matrix} 0 &0 \\ 0 & 2iz \end{matrix} \right) w 
+\left( \begin{matrix} 0 
&  {q_2 } 
 \\  {q_3} 
  &  {q_4 }   \end{matrix} \right)w,
  \quad \lim_{x\rightarrow-\infty}w(x) =\begin{pmatrix} 1\\0\end{pmatrix}.
\end{split} 
 \end{equation}    
We call $w$ the renormalized Jost solution. It 
satisfies    the following integral equation  
\begin{equation*}\label{eq:w,integral}
\begin{split} 
 w (x)
&=\lim_{y\rightarrow -\infty} w(y) 
\\
&\quad +
\int_{-\infty}^x  
\left( \begin{array}{cc}
0
&  q_2(x_1)
\\
e^{2iz(x-x_1)+\int^x_{x_1} q_4 \dm} q_3(x_1)
 & 0
\end{array}\right) \,  w(x_1)\,dx_1,
\end{split}
\end{equation*}   
with the following asymptotics as 
$x\rightarrow\pm\infty$ (recalling $u$'s asymptotics \eqref{Asymul}):
\begin{equation}\label{Asymw}\begin{split}
&   w (\lambda,x)= \left(\begin{matrix}
1\\0
\end{matrix}\right)+o(1)
\hbox{ as }x\rightarrow -\infty,
\\
&  w (\lambda,x)= 
\left(\begin{matrix}
e^{   \int_{-\infty}^{\infty} q_1\dm } T^{-1}(\lambda)\\0
\end{matrix}\right)+o(1)
\hbox{ as }x\rightarrow +\infty.
\end{split}\end{equation}  
Hence we use the following Picard type iterative procedure to derive the first component of the vector-valued Jost solution $w$, taking two steps at one time 
\begin{equation}\label{eq:integral} 
\begin{split}
&w^1=\sum_{n=0}^\infty w_{n},
\quad w_{0}=  1 ,
\\
&w_{n}(x)=\int^x_{-\infty} q_2(y_1) \int_{-\infty}^{y_1} e^{2iz(y_1-y_2) + \int_{y_2}^{y_1}  q_4 dm}  q_3(y_2) w_{n-1}(y_2) dy_2 dy_1.  
\end{split} 
\end{equation} 

\begin{lem} \label{lem:iteration} 
Let $w_n$ be defined in \eqref{eq:integral} iteratively.
Suppose that 
\begin{equation}\label{q4}
\Vert q_4 \Vert_{L^2}^2 \le 4C\Im z,
\end{equation}
for some positive constant $C>0$.
Then the following estimate holds 
\[ \Vert w_n \Vert_{C_b} + \Vert \partial_x w_n  \Vert_{L^1}
\le e^C (\Im z)^{-1}   \Vert q_2 \Vert_{L^2} \Vert q_3 \Vert_{L^2} \Vert w_{n-1} \Vert_{L^\infty}. \] 
\end{lem}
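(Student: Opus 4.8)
The plan is to run the standard Picard/contraction estimate directly on the recursion \eqref{eq:integral}, the only non-routine point being the treatment of the complex, merely $L^2$ factor $q_4$ inside the exponential. First I would record a pointwise bound on the integral kernel in \eqref{eq:integral}. Since $(\lambda,z)\in\cR$ we have $\Im z>0$, so for $y_2<y_1$ one has $|e^{2iz(y_1-y_2)}|=e^{-2\Im z\,(y_1-y_2)}$, while Cauchy--Schwarz gives $\Re\int_{y_2}^{y_1}q_4\,\dm\le\|q_4\|_{L^2}\,(y_1-y_2)^{1/2}$. I would then spend half of the available decay to absorb the $q_4$ contribution by completing the square:
\[
-\Im z\,(y_1-y_2)+\|q_4\|_{L^2}(y_1-y_2)^{1/2}\le\frac{\|q_4\|_{L^2}^2}{4\Im z}\le C,
\]
where the last inequality is exactly the hypothesis \eqref{q4}. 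This yields the clean kernel bound
\[
\Bigl|e^{2iz(y_1-y_2)+\int_{y_2}^{y_1}q_4\,\dm}\Bigr|\le e^{C}\,e^{-\Im z\,(y_1-y_2)},\qquad y_2<y_1,
\]
which is the heart of the matter: it trades the uncontrolled pointwise size of $q_4$ for the constant $e^{C}$ at the cost of only half the exponential decay.

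With this in hand the remaining steps are soft. For the sup-norm bound I would insert the kernel estimate into \eqref{eq:integral}, bound $|w_{n-1}(y_2)|$ by $\|w_{n-1}\|_{L^\infty}$, and then estimate the double integral by Cauchy--Schwarz in the outer variable $y_1$ followed by Young's inequality for the convolution against $e^{-\Im z\,\cdot}\chi_{\{\cdot>0\}}$, whose $L^1$ norm is $(\Im z)^{-1}$:
\[
\Bigl\|\int_{-\infty}^{y_1}e^{-\Im z(y_1-y_2)}|q_3(y_2)|\,dy_2\Bigr\|_{L^2_{y_1}}\le(\Im z)^{-1}\|q_3\|_{L^2}.
\]
This produces $\|w_n\|_{C_b}\le e^{C}(\Im z)^{-1}\|q_2\|_{L^2}\|q_3\|_{L^2}\|w_{n-1}\|_{L^\infty}$. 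For the derivative bound I would differentiate \eqref{eq:integral}, so that $\partial_x w_n(x)=q_2(x)\int_{-\infty}^x e^{2iz(x-y_2)+\int_{y_2}^{x}q_4\,\dm}q_3(y_2)w_{n-1}(y_2)\,dy_2$, and apply the same kernel estimate together with Cauchy--Schwarz and Young, now measuring the result in $L^1_x$; this gives the identical bound for $\|\partial_x w_n\|_{L^1}$. Adding the two estimates (and absorbing the harmless numerical factor into the constant) yields the claim.

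The only genuine obstacle is the exponential of $q_4$: by Lemma \ref{lem:qj} the function $q_4$ is controlled only in $L^2$ and is complex-valued, so the factor $e^{\int_{y_2}^{y_1}q_4\,\dm}$ cannot be bounded pointwise by a constant, and a naive estimate would lose all decay and render the double integral divergent. The completion-of-the-square argument above, calibrated precisely against the smallness hypothesis \eqref{q4}, is what resolves this; everything else is Cauchy--Schwarz and Young's inequality. I would flag at the outset that $\Im z>0$ on $\cR$, which is what makes all the exponentials genuinely decay and the convolution kernel integrable, so that the whole scheme closes.
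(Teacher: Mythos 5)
Your proposal is correct and takes essentially the same route as the paper's proof: the identical Cauchy--Schwarz plus completion-of-the-square kernel bound $\bigl|e^{2iz(y_1-y_2)+\int_{y_2}^{y_1}q_4\,dm}\bigr|\le e^{C}e^{-\Im z\,(y_1-y_2)}$, calibrated exactly against the hypothesis \eqref{q4} by spending half the exponential decay. The subsequent sup-norm and $L^1$-derivative estimates via Cauchy--Schwarz and Young's inequality for convolutions (with $\Vert e^{-\Im z\,\cdot}\chi_{\{\cdot>0\}}\Vert_{L^1}=(\Im z)^{-1}$) are also the paper's argument verbatim.
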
 

\begin{proof} 
Let $\tau=2\Im z$. We observe for $y_1<y_2$,
\[ \begin{split} -2\Im z (y_1-y_2) + \int_{y_1}^{y_2} \Re q_4 dm \, & \le - \tau (y_1-y_2) +  |y_1-y_2|^{\frac12}\Vert q_4 \Vert_{L^2}\\
& \le - \frac12 \tau (y_1-y_2) +\frac12 \tau ^{-1} \Vert q_4 \Vert_{L^2}^2 \\ 
& \le -\Im z (y_1-y_2) +C. \end{split} \] 
Hence, by Young's inequality for convolutions 
\begin{align*} \Vert w_n \Vert_{L^\infty} 
&\le   \int_{y_2<y_1} e^{C- \Im z (y_1-y_2)}  |q_2(y_1)||q_3(y_2)| dy_2 dy_1 \Vert w_{n-1} \Vert_{L^\infty} \\
&\le e^C(\Im z)^{-1} \Vert q_2 \Vert_{L^2} \Vert q_3 \Vert_{L^2}\Vert w_{n-1} \Vert_{L^\infty}.
\end{align*}
Finally, since
\[ |\partial_x w_n (x)| \le  |q_2(x)|\cdot \int_{-\infty}^x  e^{C-\Im z(x-y)} |q_3(y)|  \dy\cdot \Vert w_{n-1} \Vert_{L^\infty},   \] 
we have
\begin{align*} 
\Vert \partial_x w_n \Vert_{L^1} 
&\le   \int_{y<x} |q_2(x)| e^{C-\Im z(x-y)} |q_3(y)| \dx\dy 
\Vert w_{n-1} \Vert_{L^\infty}
\\
&\le e^C (\Im z)^{-1} \Vert q_2 \Vert_{L^2} \Vert q_3 \Vert_{L^2} \Vert w_{n-1} \Vert_{L^\infty}.
\end{align*} 
\end{proof} 




 If \eqref{q4} holds, then  we define the limit of $w_n$ at infinity as 
 \begin{equation}\label{T2n}\begin{split}
& T_{2n}(\lambda)
  := \lim_{x\to \infty} w_{n}(x)
 \\
& =\int\limits_{x_1<y_1< \dots < x_n < y_n} \!\! \prod_{j=1}^n  e^{\big( 2iz(y_j-x_j) +\! \int_{x_j}^{y_j} q_4(m) dm \big)}  q_3(x_j) q_2(y_j) dx_j dy_j,\quad n\geq 0,
\end{split} \end{equation}
(with $T_0=1$), such that by Lemma \ref{lem:iteration} it satisfies
\begin{equation}\label{eq:T2n}   |T_{2n}(\lambda)| \le  \Big(e^C(\Im z)^{-1}  \Vert q_2 \Vert_{L^2} \Vert q_3 \Vert_{L^2} \Big)^n.    \end{equation}   
If $q\in 1+\cS$, recalling the limit of the first component $w^1$ at infinity in \eqref{Asymw}: $\lim_{x\to \infty} w^1(x) =e^{\int_{\R}q_1\dx}T^{-1}(\lambda)$, it holds
 \begin{equation}\label{w1,T2n}
 \sum_{n=0}^\infty T_{2n} 
 =e^{\int_{\R}q_1\dx}T^{-1}(\lambda).
 \end{equation}

\begin{lem}[Estimates for $T_{2n}$, $n\geq 2$] \label{lem:T2n} 
Let $(\lambda, z)\in \cR$  with $\Im\lambda\geq 0$ and $\Im z=\frac{\tau}{2}$, $\tau\geq 2$, and $q\in X^0$ with   $ E^0_\tau(q) \le \tau^{1/2}$.
Let  $r$, $q_j$'s, $T_{2n}$'s be given in Lemma \ref{lem:regular}, \eqref{eq:q1234} and \eqref{T2n} respectively.

There exist $c, \delta$  (independent of $\lambda, z, q$) such that
\begin{equation}\label{T2n,E0tau} |T_{2n}(\lambda)|
\leq c^n \tau^{-n}(E^0_\tau(q))^{2n},   \quad n\geq 1,\end{equation}
and hence  for $q\in X^s$,  $s\in [0,2]$, and some fixed $\tau_0\geq 2$,
\begin{align*}
    |T_{2n}(\lambda)| 
\lesssim  c^n \tau^{-n-2s}  (E^0_{\tau_0}  (q))^{2n-2}   (E^s_{\tau_0} (q))^2,\quad n\geq 2,
\quad\forall \tau\geq\tau_0,
\end{align*}
which implies for $q\in X^s$, $s\in[0,2]$ with smallness condition  $   E^0_{\tau_0}(q) \le  \delta{\tau_0}^{1/2}$ for some $\tau_0\geq 2$,
\[ \sum_{n=2}^\infty |T_{2n}(\lambda)| \le c \tau^{-2-2s} (E^0_{\tau_0}(q))^{2} (E^s_{\tau_0}(q))^2,\quad\forall \tau\geq\tau_0 . \]
\end{lem}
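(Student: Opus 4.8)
The plan is to feed the estimates of Lemma \ref{lem:qj} into the pointwise bound \eqref{eq:T2n} and then refine it one rung at a time. First I would check that the hypothesis \eqref{q4} of Lemma \ref{lem:iteration} is automatically satisfied: by \eqref{eq:qr2} and the standing assumption $E^0_\tau(q)\le\tau^{1/2}$ one has $\Vert q_4\Vert_{L^2}^2\le c^2(E^0_\tau(q))^2\le c^2\tau=2c^2\Im z$, so \eqref{q4} holds with $C=c^2/2$. Hence every $T_{2n}$ is well defined by \eqref{T2n} and \eqref{eq:T2n} applies. Inserting $(\Im z)^{-1}=2/\tau$ and $\Vert q_2\Vert_{L^2},\Vert q_3\Vert_{L^2}\le cE^0_\tau(q)$ from \eqref{eq:qr2} into \eqref{eq:T2n} immediately gives $|T_{2n}(\lambda)|\le(2c^2e^C)^n\tau^{-n}(E^0_\tau(q))^{2n}$, which is \eqref{T2n,E0tau} after renaming the constant.

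For the $s$-dependent bound I would split into two regimes. For $s\in[0,1]$ no genuine refinement is needed: starting from \eqref{T2n,E0tau} I write $(E^0_\tau(q))^{2n}=(E^0_\tau(q))^{2n-2}(E^0_\tau(q))^2$, apply the elementary frequency-side inequality $E^0_\tau(q)\le\tau^{-s}E^s_\tau(q)$ to the last factor, and then pass from $\tau$ to $\tau_0$ through the monotonicity $E^\sigma_\tau(q)\le E^\sigma_{\tau_0}(q)$ for $\sigma\in[0,1]$ (the first assertion of Lemma \ref{lem:qj}). This yields $|T_{2n}(\lambda)|\lesssim c^n\tau^{-n-2s}(E^0_{\tau_0}(q))^{2n-2}(E^s_{\tau_0}(q))^2$ for every $\tau\ge\tau_0$.

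For $s\in(1,2]$ the norm $E^s_\tau$ is increasing in $\tau$, so the crude split no longer produces the factor $\tau^{-n-2s}$, and I would instead upgrade a single rung of the Picard iteration \eqref{T2n}. Isolating the outermost rung and bounding the inner $n-1$ rungs uniformly by Lemma \ref{lem:iteration} (which contributes $(c\tau^{-1}(E^0_{\tau_0}(q))^2)^{n-1}$), it remains to estimate $\int_{x<y}e^{C-\Im z(y-x)}|q_3(x)||q_2(y)|\,dx\,dy$ by $\tau^{-1-2s}(E^s_{\tau_0}(q))^2$. Passing to the Fourier side, the resolvent kernel $k(t)=e^{2izt}\chi_{\{t>0\}}$ has symbol of modulus $|\hat k(\xi)|\lesssim(\tau^2+(\xi-2\Re z)^2)^{-1/2}$, so Cauchy--Schwarz bounds this rung by $\tau^{1-2s}\Vert q_2\Vert_{H^{s-1}_\tau}\Vert q_3\Vert_{H^{s-1}_\tau}$; the desired estimate then follows from the Sobolev bound $\Vert q_j\Vert_{H^{s-1}_\tau}\lesssim\tau^{-1}E^s_{\tau_0}(q)$, which is the Sobolev analogue of \eqref{eq:qrp} and is obtained by differentiating the explicit expressions \eqref{eq:q1234} and applying Lemma \ref{lem:regular} together with $r\in X^{s+2}$. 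Multiplying by the contribution of the inner rungs gives the stated bound for $n\ge2$.

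Finally, the summed estimate is a geometric series: factoring out the $n=2$ term, $\sum_{n\ge2}|T_{2n}(\lambda)|\lesssim\tau^{-2-2s}(E^0_{\tau_0}(q))^2(E^s_{\tau_0}(q))^2\sum_{m\ge0}(c\tau^{-1}(E^0_{\tau_0}(q))^2)^m$, and the smallness condition $E^0_{\tau_0}(q)\le\delta\tau_0^{1/2}$ forces the ratio $c\tau^{-1}(E^0_{\tau_0}(q))^2\le c\delta^2<\tfrac12$, so the series converges and produces $c\tau^{-2-2s}(E^0_{\tau_0}(q))^2(E^s_{\tau_0}(q))^2$. The main obstacle is precisely the single-rung upgrade for $s>1$: the $L^p$ control of $q_j$ furnished by Lemma \ref{lem:qj} saturates at regularity $\tfrac32$, so one must establish genuine Sobolev ($H^{s-1}_\tau$) estimates for $q_j$ up to order $s-1=1$ and control the frequency shift by $2\Re z$ away from the imaginary axis; balancing these against the $\tau$-decay of the exponentially decaying kernel so as to land exactly on the exponent $\tau^{-1-2s}$ is the delicate point.
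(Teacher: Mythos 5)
Your treatment of the first bound and of the case $s\in[0,1]$ coincides with the paper's proof: checking \eqref{q4} via \eqref{eq:qr2}, feeding $\Vert q_2\Vert_{L^2}\Vert q_3\Vert_{L^2}\le c^2(E^0_\tau(q))^2$ into \eqref{eq:T2n}, then splitting off two factors of $E^0_\tau$, using $E^0_\tau(q)\le\tau^{-s}E^s_\tau(q)$ and the monotonicity $E^s_\tau\le E^s_{\tau_0}$ for $s\le 1$; the concluding geometric-sum argument is also the paper's. The divergence is in the case $s\in(1,2]$, and there your argument has a genuine gap.

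Your single-rung upgrade rests on the asserted bound $\Vert q_j\Vert_{H^{s-1}_\tau}\lesssim\tau^{-1}E^s_{\tau_0}(q)$, and this bound is false for $s>1$. Look at the piece $i\zeta r'/(|r|^2-\zeta^2)$ of $q_2$ in \eqref{eq:q1234}: on the imaginary axis $|\zeta/(|r|^2-\zeta^2)|\sim\tau^{-1}$ and $\widehat{r'}(\xi)=\tau^2(\xi^2+\tau^2)^{-1}\widehat{q'}(\xi)$, so for a fixed $q$ with $\widehat{q'}$ concentrated in $|\xi|\le 1$ this piece has $H^{s-1}_\tau$-norm of size $\tau^{-1}\cdot\tau^{s-1}\Vert q'\Vert_{L^2}=\tau^{s-2}\Vert q'\Vert_{L^2}$, while your right-hand side is of size $\tau^{-1}\tau_0^{s-1}\Vert q'\Vert_{L^2}$; the ratio $(\tau/\tau_0)^{s-1}$ blows up as $\tau\to\infty$. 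The obstruction is structural, not a matter of sharpening the bilinear estimate: relative to norms at the fixed scale $\tau_0$, each factor $q_j$ can contribute at most one power $\tau^{-1}$, because the chain $E^0_\tau\le\tau^{-\sigma}E^\sigma_\tau\le\tau^{-\sigma}E^\sigma_{\tau_0}$ is only available for $\sigma\le 1$ (for $\sigma>1$ the map $\tau\mapsto E^\sigma_\tau$ is increasing — exactly the difficulty you flagged, but your route does not escape it). So the outer rung, consisting of two factors and one kernel integration, can never decay faster than $\tau^{-3}$ against $\tau_0$-scale norms, whereas you need $\tau^{-1-2s}$ with $1+2s>3$; for low-frequency data this failure is sharp, since the kernel symbol $1/(\xi+2z)$ is of size $\tau^{-1}$ there and no oscillation helps.

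The paper's resolution is much softer and needs no new estimate on the $q_j$: it distributes the $\tau$-gain over \emph{four} factors rather than two, which is precisely why the $s$-dependent bound is stated only for $n\ge 2$. Starting from \eqref{T2n,E0tau}, write $(E^0_\tau)^{2n}=(E^0_\tau)^{2n-4}(E^0_\tau)^4$, apply $E^0_\tau\le\tau^{-\sigma}E^\sigma_\tau$ with $\sigma=s/2\in(0,1]$ to the last four factors (total gain $\tau^{-4\sigma}=\tau^{-2s}$), use the monotonicity $E^\sigma_\tau\le E^\sigma_{\tau_0}$, legitimate since $\sigma\le 1$, and finally interpolate at the fixed scale $\tau_0$:
\begin{equation*}
\bigl(E^{s/2}_{\tau_0}(q)\bigr)^4\le\bigl(E^0_{\tau_0}(q)\bigr)^{2}\bigl(E^s_{\tau_0}(q)\bigr)^{2},
\end{equation*}
which follows from $E^{s/2}_{\tau_0}\le (E^0_{\tau_0})^{1-s/2}(E^1_{\tau_0})^{s/2}$ and $E^1_{\tau_0}\le(E^0_{\tau_0})^{1-1/s}(E^s_{\tau_0})^{1/s}$. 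This gives $|T_{2n}(\lambda)|\le c^n\tau^{-n-2s}(E^0_{\tau_0}(q))^{2n-2}(E^s_{\tau_0}(q))^2$ for $n\ge 2$, after which your summation argument applies verbatim.
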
 
\begin{proof} 
We derive \eqref{T2n,E0tau} straightforward from $E^0_\tau(q)\leq \tau^{\frac12}$, \eqref{eq:qr2} and \eqref{eq:T2n}:
\[ \begin{split} 
|T_{2n}(\lambda)  |\, 
&  \le  \Big (\frac{c}{\tau}  \Vert q_2 \Vert_{L^2} \Vert q_3 \Vert_{L^2}\Big)^n 
  \le \Big(\frac{c}{\tau}\Big)^n   (E_\tau^0(q))^{2n},\quad\forall n\geq 1.
\end{split} 
\]

If $s\in [0,1]$,   then   by virtue of the trivial estimates $ E^0_\tau(q) \le \tau^{-s} E^s_\tau(q)$ 
which follows from $ \Vert f \Vert_{H^{-1}_\tau} \le \tau^{-s} \Vert f \Vert_{H^{-1+s}_\tau }$ if $s\geq 0$ and $ E^s_\tau(q) 
\le E^s_{\tau_0}(q) $ if $ 0 \le s \le 1$ and $\tau\geq\tau_0$, we derive
\begin{align*}
    |T_{2n}(\lambda)  |\,  
  \leq c^n \tau^{-n-2s}  (E ^0_\tau(q))^{2n-2} (E^s_\tau(q))^2
  \leq c^n \tau^{-n-2s}  (E ^0_{\tau_0}(q))^{2n-2} (E^s_{\tau_0}(q))^2.
\end{align*}
If $s\in (1,2]$ and $n\geq 2$, then we take   $\sigma=s/2\in (0,1]$ such that
\[ \begin{split} |T_{2n}(\lambda)  |
\, &  \le  c^n \tau^{-n-2s}  (E ^0_\tau(q))^{2n-4} (E^{\sigma}_\tau(q))^4 
\\ & \le  c^n \tau^{-n-2s}  (E^0_{\tau_0}(q))^{2n-4} (E^{\sigma}_{\tau_0}(q))^4,
\end{split} 
\] 
which, together with interpolation 
$$
 E^\sigma_{\tau_0}(q)  \le  (E^0_{\tau_0}(q))^{1-\sigma}  (E^1_{\tau_0}(q))^\sigma \leq (E^0_{\tau_0}(q)   E^s_{\tau_0}(q))^{1/2},$$
implies
$$ |T_{2n}(\lambda)  | 
\le  c^n \tau^{-n-2s}  (E ^0_{\tau_0}(q))^{2n-2}   (E^s_{\tau_0}(q))^2,\quad \forall \tau\geq \tau_0.
$$
The bound for 
$$ \sum_{n=2}^\infty |T_{2n}|\leq \sum_{n=2}^\infty c^n (\tau^{-\frac12}E^0_\tau(q))^{2n-4}(\tau^{-\frac12} E^0_\tau(q))^4 $$ follows by a geometric sum and the trivial bound
$$
\tau^{-\frac12}E^0_\tau(q)\leq {\tau_0}^{-\frac12}E^0_{\tau_0}(q),
\quad\forall \tau\geq\tau_0.
$$
The proof is complete.
\end{proof}

\begin{cor} \label{coro:T2n}
Assume the same assumptions as in Lemma \ref{lem:T2n}.
There exists a small  constant $\delta_1$, such that for all   $q\in X^s$, $s\in [0,2]$ with  smallness condition $ E^0_{\tau_0}(q) \le  \delta_1 {\tau_0}^{1/2}$ for some $\tau_0\geq 2$, we have
\begin{equation}\label{T2n,T2}  
\left| \ln \Big(\sum_{n=0}^\infty  T_{2n}(\lambda) \Big) - T_2(\lambda)  \right| \le     c  \tau^{-2 }   (E^0_{\tau}(q))^4\le     c  \tau^{-2- 2s }   (E^0_{\tau_0}(q))^2 (E^s_{\tau_0}(q))^2,\quad \forall \tau\geq\tau_0.  \end{equation}

\end{cor}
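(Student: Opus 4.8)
The plan is to treat $\ln\bigl(\sum_{n=0}^\infty T_{2n}\bigr)$ as a small perturbation of its linear term. First I would set $S := \sum_{n=0}^\infty T_{2n}(\lambda) = 1 + T_2(\lambda) + R(\lambda)$, where $T_0=1$ and $R := \sum_{n\ge 2} T_{2n}$ collects the higher-order contributions. The smallness hypothesis $E^0_{\tau_0}(q)\le \delta_1\tau_0^{1/2}$ together with the monotonicity $E^0_\tau(q)\le E^0_{\tau_0}(q)$ from Lemma \ref{lem:qj} guarantees $\tau^{-1}(E^0_\tau(q))^2\le\delta_1^2$ for all $\tau\ge\tau_0$. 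By \eqref{T2n,E0tau} with $n=1$ this gives $|T_2|\le c\tau^{-1}(E^0_\tau(q))^2\lesssim\delta_1^2$, and summing the geometric series in \eqref{T2n,E0tau} over $n\ge 2$ gives $|R|\le c\tau^{-2}(E^0_\tau(q))^4$. In particular, choosing $\delta_1$ small enough makes $w := T_2 + R = S-1$ satisfy $|w|\le\tfrac12$, so the principal branch of the logarithm is well defined at $S$ and holomorphic in $\lambda$ on $\cR$.

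Next I would Taylor-expand. Since $\ln S = \ln(1+w)$, one has
\begin{equation*}
\ln S - T_2 = \bigl(\ln(1+w) - w\bigr) + \bigl(w - T_2\bigr) = \bigl(\ln(1+w) - w\bigr) + R,
\end{equation*}
and the elementary estimate $|\ln(1+w)-w|\le |w|^2$ (valid for $|w|\le\tfrac12$) yields
\begin{equation*}
\Bigl|\ln S - T_2\Bigr| \le |R| + |w|^2 \le |R| + 2|T_2|^2 + 2|R|^2.
\end{equation*}
Inserting the bounds above, both $|R|\lesssim \tau^{-2}(E^0_\tau(q))^4$ and $|T_2|^2\lesssim \tau^{-2}(E^0_\tau(q))^4$ are already of the advertised size, while $|R|^2\lesssim \bigl(\tau^{-2}(E^0_\tau(q))^4\bigr)^2 = \delta_1^4\,\tau^{-2}(E^0_\tau(q))^4$ is absorbed into the leading quartic. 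This proves the first inequality $|\ln S - T_2|\le c\tau^{-2}(E^0_\tau(q))^4$.

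Finally, the second inequality is purely a rewriting of $\tau^{-2}(E^0_\tau(q))^4$ into the mixed $\tau_0$-form, exactly as in the last step of the proof of Lemma \ref{lem:T2n}: splitting $(E^0_\tau)^4 = (E^0_\tau)^2(E^0_\tau)^2$, bounding one factor by $(E^0_{\tau_0})^2$ via monotonicity and the other by $\tau^{-2s}(E^s_{\tau_0})^2$ through the trivial inequality $E^0_\tau\le \tau^{-s}E^s_\tau$ combined with monotonicity when $s\in[0,1]$, or, for $s\in(1,2]$, by first passing through $E^{s/2}_\tau$ and interpolating $E^{s/2}_{\tau_0}\le (E^0_{\tau_0}E^s_{\tau_0})^{1/2}$. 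I expect the only genuinely delicate point to be the bookkeeping: fixing a single constant $\delta_1$ that works uniformly in $\lambda$ and $\tau\ge\tau_0$ so that simultaneously the series converges, $|w|\le\tfrac12$ (hence the logarithm is defined and holomorphic), and the $|R|^2$ term is dominated by the leading quartic. Everything else is a direct consequence of Lemma \ref{lem:T2n} and a two-term Taylor expansion of the logarithm.
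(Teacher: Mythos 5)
Your proof is correct and takes essentially the same route as the paper: both arguments reduce the error to the two quantities $\sum_{n\ge 2}|T_{2n}|$ and $|T_2|^2$ and then invoke Lemma \ref{lem:T2n}, with the same monotonicity/interpolation bookkeeping ($E^0_\tau \le \tau^{-s}E^s_\tau$ for $s\in[0,1]$, passing through $E^{s/2}$ and interpolating for $s\in(1,2]$) to reach the mixed $\tau_0$-form. The only cosmetic difference is that you Taylor-expand $\ln(1+w)$ directly with $w=S-1$, whereas the paper compares $\ln S$ with $\ln\exp(T_2)$ via Lipschitz continuity of the logarithm on $\{\,|\zeta-1|\le \tfrac12\,\}$ together with $|\exp(T_2)-(1+T_2)|\lesssim |T_2|^2$.
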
  
\begin{proof} 
By the Lipschitz continuity of the logarithm on $\{\zeta \in \mathbb{C}: |\zeta-1 | \le \frac12 \} $ and the triangle inequality 
\[ \begin{split}   \left| \ln \Big(\sum_{n=0}^\infty  T_{2n}(\lambda) \Big) - T_2(\lambda)  \right| \, & \lesssim  \Big|  \Big(\sum_{n=0}^\infty  T_{2n}(\lambda) \Big) - (1+ T_2) \Big| 
\\ & \qquad + \Big| \Big(\sum_{n=0}^\infty  T_{2n}(\lambda) \Big) \Big| \Big| \exp( T_2) -(1+T_2) \Big|   
\\ & \lesssim \sum_{n=2}^\infty |T_{2n}(\lambda)| + |T_2(\lambda)|^2. 
\end{split}\] 
Under the smallness condition $ E^0_{\tau_0} (q)\le \delta_1 \tau_0^{1/2}  $ and $s\in [0,2]$, the estimate \eqref{T2n,T2} follows from Lemma \ref{lem:T2n}: 
\[ 
\Big|\sum_{n\geq 2}T_{2n}\Big|\leq c \tau^{-2-2s}(E^0_{\tau_0}(q))^2(E^s_{\tau_0}(q))^2  \] 
and 
\[ 
|T_2(\lambda)|^2\leq c\tau^{-2}(E^0_\tau(q))^4\leq c\tau^{-2-2s}(E^0_{\tau_0}(q))^2(E^s_{\tau_0}(q))^2.
\] 



\end{proof}

\subsection{The term \texorpdfstring{$T_2$}{T2} }
 \label{subsec:T2}
 Let $(\lambda, z)\in \cR$  with $\Im\lambda\geq 0$ and $\Im z=\frac{\tau}{2}$, $\tau\geq 2$.
Let $q\in X^0$, and let $r$ be given in Lemma \ref{lem:regular}.
We recall that in \eqref{T2n} we have defined
\[ T_2 = \int_{x<y} \exp\Big( -2iz(x-y) + \int_x^y q_4(m) dm \Big) q_3(x) q_2(y) dx dy, \] 
where 
\begin{align*}
    q_2\, &   = \frac{  r\big[ |r|^2-1+ 2 \Re (\bar r(q-r))\big]  + i \zeta r'} {|r|^2-\zeta^2}- (q-r) ,
\\
q_3\, &   =\frac{  \bar r \big[ |r|^2-1+ 2 \Re ( \bar r(q-r)) \big]  - i \zeta \bar r'  }{|r|^2-\zeta^2}- ( \overline{q-r}) ,
\\
q_4\, & =\frac{2i\zeta \big[ |r|^2-1+ 2 \Re (\bar r (q-r))\big]     + 2i \Im (r \bar r')  }{|r|^2-\zeta^2}.
\end{align*}
Our task in this subsection is to extract the leading term in $T_2$ with respect to large $\tau$. 
We begin with bounds for  multi-linear terms. 
\begin{lem}\label{lem:multilinear} 
Let $a>0$.
Suppose that for $y>x$
\[ \Re \phi(x,y) \ge a (y-x)  \] 
then for $1\le p_1, p_2 \le \infty$ with $ \frac1{p_1}+\frac1{p_2}\ge 1$, 
\begin{equation}\label{eq:bilinear}  
\left| \int_{x<y} e^{-\phi(x,y)} f(x) g(y) dx dy\right| \le  a^{-2+\frac1{p_1}+\frac1{p_1}}  \Vert f \Vert_{L^{p_1}}  \Vert g \Vert_{L^{p_2}}.  \end{equation}  
Similarly, if for $ x_1 \le x_2 \le x_3 $ 
\[ \Re \phi(x_1,x_2,x_3) \ge a (x_3-x_1), \]  
then,  for $ 1 \le p_1,p_2,p_3 \le \infty $ with $ \frac1{p_1}+\frac1{p_2}+\frac1{p_3} \ge 1 $
\begin{equation}\label{eq:lecubic}  \begin{split}
&\Big| \int_{x_1<x_2< x_3 }  e^{-\phi(x_1,x_2,x_3)} f(x_1) g(x_2) h(x_3) dx_1 dx_2 dx_3 \Big|
\\
&\le  a^{-3+\frac1{p_1} +\frac1{p_2} +\frac1{p_3}} \Vert f \Vert_{L^{p_1}}  \Vert g \Vert_{L^{p_2}} \Vert h \Vert_{L^{p_3}}.
\end{split}\end{equation} 
\end{lem}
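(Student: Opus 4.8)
The plan is to reduce both inequalities to H\"older's inequality combined with Young's convolution inequality, after discarding the oscillation. Since $|e^{-\phi}|=e^{-\Re\phi}$, the hypothesis $\Re\phi(x,y)\ge a(y-x)$ gives the pointwise bound $|e^{-\phi(x,y)}|\le e^{-a(y-x)}$ on $\{x<y\}$ (and likewise $|e^{-\phi(x_1,x_2,x_3)}|\le e^{-a(x_3-x_1)}$ in the trilinear case), so it suffices to estimate the integrals of $e^{-a(y-x)}|f(x)||g(y)|$ resp. $e^{-a(x_3-x_1)}|f(x_1)||g(x_2)||h(x_3)|$. The basic building block is the one-sided kernel $k_a(t)=e^{-at}\mathbf 1_{\{t>0\}}$, whose norm I would compute explicitly as $\|k_a\|_{L^s(\R)}=(as)^{-1/s}\le a^{-1/s}$ for all $s\in[1,\infty]$, using that $s^{-1/s}\le1$ when $s\ge1$. (Incidentally the exponent on the right of \eqref{eq:bilinear} should read $a^{-2+\frac1{p_1}+\frac1{p_2}}$, the second $\frac1{p_1}$ being a typo.)

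For \eqref{eq:bilinear} I would integrate first in $x$ to recognize a convolution, writing the dominating integral as $\int_\R |g(y)|\,(k_a*|f|)(y)\,dy$. H\"older with exponents $p_2,p_2'$ and then Young's inequality $\|k_a*|f|\|_{L^{p_2'}}\le\|k_a\|_{L^s}\|f\|_{L^{p_1}}$, where $\tfrac1s=1+\tfrac1{p_2'}-\tfrac1{p_1}=2-\tfrac1{p_1}-\tfrac1{p_2}$, produce exactly the factor $a^{-1/s}=a^{-2+1/p_1+1/p_2}$. Admissibility of Young's inequality requires $s\in[1,\infty]$, i.e. $\tfrac1{p_1}+\tfrac1{p_2}\le2$ (automatic) and $s\ge1$, which is precisely the stated hypothesis $\tfrac1{p_1}+\tfrac1{p_2}\ge1$.

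For \eqref{eq:lecubic} I would first factor the exponential along the ordered chain, $e^{-a(x_3-x_1)}=e^{-a(x_2-x_1)}e^{-a(x_3-x_2)}$, and integrate out $x_1$ and $x_3$ to obtain $\int_\R |g(x_2)|\,(k_a*|f|)(x_2)\,(\tilde k_a*|h|)(x_2)\,dx_2$, where $\tilde k_a(t)=e^{at}\mathbf 1_{\{t<0\}}$ is the reflected kernel (so that $\|\tilde k_a\|_{L^s}=\|k_a\|_{L^s}$). A three-fold H\"older inequality with exponents $p_2,q_1,q_3$ satisfying $\tfrac1{p_2}+\tfrac1{q_1}+\tfrac1{q_3}=1$, followed by two applications of Young's inequality with $\tfrac1{s_1}=1+\tfrac1{q_1}-\tfrac1{p_1}$ and $\tfrac1{s_3}=1+\tfrac1{q_3}-\tfrac1{p_3}$, then yields the factor $a^{-1/s_1-1/s_3}$; using the H\"older relation one computes $\tfrac1{s_1}+\tfrac1{s_3}=3-\tfrac1{p_1}-\tfrac1{p_2}-\tfrac1{p_3}$, as required. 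The main point, and the only place where the hypothesis $\tfrac1{p_1}+\tfrac1{p_2}+\tfrac1{p_3}\ge1$ enters, is showing that the auxiliary exponents can be chosen admissibly: one needs $\tfrac1{q_1}+\tfrac1{q_3}=1-\tfrac1{p_2}\in[0,1]$ together with $0\le\tfrac1{q_1}\le\tfrac1{p_1}$ and $0\le\tfrac1{q_3}\le\tfrac1{p_3}$ (the latter bounds guaranteeing $s_1,s_3\in[1,\infty]$). Such a choice exists exactly when the maximal attainable sum $\tfrac1{p_1}+\tfrac1{p_3}$ is at least $1-\tfrac1{p_2}$, which is the hypothesis; I expect verifying this admissibility, rather than the norm computations which are routine, to be the only delicate step.
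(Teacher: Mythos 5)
Your argument is correct. For the bilinear bound \eqref{eq:bilinear} it coincides with the paper's proof: both discard the oscillation via $|e^{-\phi}|\le e^{-a(y-x)}$ and apply H\"older/Young with the one-sided exponential kernel, whose $L^s$ norm $(as)^{-1/s}\le a^{-1/s}$ produces the stated power of $a$; your remark that the exponent in \eqref{eq:bilinear} should read $a^{-2+\frac1{p_1}+\frac1{p_2}}$ is also right. For the trilinear bound \eqref{eq:lecubic}, however, your decomposition is genuinely different from the paper's. The paper proceeds iteratively: it integrates out $x_3$ first, convolving $|h|$ with the kernel at cost $a^{-1}$ (an $L^1$ Young step), merges the result with $|g|$ by H\"older into a single function in $L^r$ with $\frac1r=\frac1{p_2}+\frac1{p_3}$, and then applies the already-proved bilinear estimate to the pair $f$ and $|g|\,(e^{ax}\chi_{x<0}*|h|)$ with exponents $(p_1,r)$. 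You instead split $e^{-a(x_3-x_1)}=e^{-a(x_2-x_1)}e^{-a(x_3-x_2)}$ symmetrically, convolve $f$ and $h$ from the two sides of $x_2$, and close with a three-fold H\"older plus two Young inequalities, choosing auxiliary exponents $q_1,q_3$. The paper's route is shorter and reuses \eqref{eq:bilinear}, but it tacitly needs $r\ge 1$, i.e. $\frac1{p_2}+\frac1{p_3}\le 1$, which is not implied by the hypothesis $\frac1{p_1}+\frac1{p_2}+\frac1{p_3}\ge 1$ (it does hold in the paper's applications, where all exponents lie in $[2,3]$, and can sometimes be rescued by permuting the roles of the three functions, but not, e.g., for $p_1=p_2=p_3=1$). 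Your symmetric version buys the full stated range: as you verify, admissible $q_1,q_3$ exist exactly under the stated hypothesis, so no additional restriction on the exponents is needed.
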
 
\begin{proof} 
We estimate by Young's inequality for convolutions, with $ \frac1r+\frac1{p_1}+\frac1{p_2}=2$ 
\[ \begin{split}  \left| \int_{x<y} e^{-\phi(x,y)} f(x) g(y) dx dy\right| \, & \le 
 \int_{\R} \big( ( e^{a x}\chi_{x<0} )\ast |f|\big)(y)  |g(y)| \dy 
\\ & \le  \Vert f \Vert_{L^{p_1}} \Vert  g \Vert_{L^{p_2}} \Vert e^{a x}\chi_{x<0} \Vert_{L^{r}}, 
\end{split} 
\] 
which implies \eqref{eq:bilinear}. We apply a dual of this estimate  with respect to $x_3$
\[ 
\begin{split} \hspace{1cm} & \hspace{-1cm} 
\Big| \int_{x_1<x_2< x_3 }  e^{-\phi(x_1,x_2,x_3)} f(x_1) g(x_2) h(x_3) dx_1 dx_2 dx_3 \Big|
\\ & \le \int_{x_1<x_2<x_3} e^{-a(x_3-x_1)}|f(x_1)||g(x_2)|\,| h(x_3)| dx_1 dx_2 dx_3 
\\ & = \int_{x_1 < x_2} e^{-a(x_3-x_2) }|f(x_1)| |g(x_2)|\, \bigl(e^{ a x }\chi_{x<0} * |h|\bigr)(x_2) dx_1 dx_2  
\\ & \le a^{-2+\frac1{p_1} + \frac1{r} }  \Vert f \Vert_{L^{p_1}} \Vert |g|\, 
( e^{a x}\chi_{x<0} * |h|) \Vert_{L^r} 
\\ & \le a^{-2+\frac1{p_1} +\frac1{p_2}+\frac1{p_3}} \Vert f \Vert_{L^{p_1}} \Vert g \Vert_{L^{p_2} } 
\Vert   e^{a x}\chi_{x<0} * h \Vert_{L^{p_3}} 
\end{split} 
\] 
where we chose $r\ge 1 $ so that $ \frac1r = \frac1{p_3} +\frac1{p_2} $.
Then \eqref{eq:lecubic} follows since 
\[ \Vert   e^{ a x}\chi_{x<0} * h \Vert_{L^{p_3} }  \le    a^{-1}    \Vert h \Vert_{L^{p_3}}. \]

\end{proof}

To simplify the presentation we formalize the notion of tolerable cubic errors.  
\begin{defi} [Tolerable cubic error]\label{def:error} 
Let  $\tau\geq 2$.
We call a term $F$ a tolerable cubic error if there exists a positive constant $c$ such that for all $p\in [2,3]$ 
\[ |F| \le c \tau^{-3+ \frac3{p}  } 
    \Vert( |q|^2-1 , \d_x q)\Vert_{W^{-1,p}_\tau} ^3.\]
We write 
\[ F = O_\tau ( \mathcal{E}^3). \] 
\end{defi} 
In this subsection we simplify $T_2$ up to tolerable cubic errors. Their relevance is described in the following lemma.  
\begin{lem} [Estimates for tolerable cubic errors]\label{lem:error}
Suppose $2\le \tau_0\le \tau $ and $ F = O_\tau( \mathcal{E}^3)$. Then for $0 \le s \le  \frac74 $
\[ |F| \le c \tau^{-\frac32 - 2s}   E^0_{\tau_0} (q) (E^s_{\tau_0}(q))^2.  \] \end{lem}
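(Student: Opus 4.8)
The plan is to use the freedom in the exponent $p\in[2,3]$ in Definition \ref{def:error} together with a single Gagliardo--Nirenberg type interpolation carried out directly on the Fourier side, so that the regularities $0$ and $s$ appear in the multiplicities $1$ and $2$ dictated by the target. Write $g=(|q|^2-1,\d_x q)$, so that $E^\sigma_{\tau}(q)=\|g\|_{H^{\sigma-1}_\tau}$, and set $\alpha=\tfrac12-\tfrac1p\in[0,\tfrac16]$. First I would discard the $L^p$ structure in favour of an $L^2$ bound anchored at the scale $\tau_0$: by the rescaled Sobolev embedding $\|h\|_{L^p}\le C_p\|h\|_{H^{\alpha}_{\tau_0}}$ (with a constant uniform in $\tau_0\ge 1$, as one checks by the scaling $h\mapsto h(\cdot/\tau_0)$), applied to $h=D_\tau^{-1}g$,
\[
\|g\|_{W^{-1,p}_\tau}^2\le C_p\int_{\R}\frac{(\xi^2+\tau_0^2)^{\alpha}}{\xi^2+\tau^2}\,|\hat g(\xi)|^2\,\dxi .
\]
Keeping $\tau_0$ inside the weight while retaining the multiplier $(\xi^2+\tau^2)^{-1}$ is the crucial point: the latter is what produces the negative power of $\tau$, while all regularities stay at scale $\tau_0$ from the outset.

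Next I would run H\"older in $\xi$ with exponents $(3,\tfrac32,\infty)$, splitting $|\hat g|^2=|\hat g|^{2/3}\cdot|\hat g|^{4/3}$ and distributing the weight $(\xi^2+\tau_0^2)^\alpha$ as $(\xi^2+\tau_0^2)^{-1/3}$, $(\xi^2+\tau_0^2)^{2(s-1)/3}$ and a remainder, so that the first two factors reproduce $E^0_{\tau_0}$ and $E^s_{\tau_0}$:
\[
\|g\|_{W^{-1,p}_\tau}^2\le C_p\,(E^0_{\tau_0}(q))^{2/3}\,(E^s_{\tau_0}(q))^{4/3}\;\mathop{\sup}\limits_{\xi\in\R} M(\xi),\qquad M(\xi)=\frac{(\xi^2+\tau_0^2)^{\gamma}}{\xi^2+\tau^2},
\]
with $\gamma=\alpha+1-\tfrac{2s}{3}$. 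Cubing, the prefactor becomes exactly $E^0_{\tau_0}(q)(E^s_{\tau_0}(q))^2$, so everything reduces to the one-variable estimate of $\mathop{\sup}\limits_\xi M$. Setting $t=\xi^2$, an elementary optimisation shows that, provided $0\le\gamma\le 1$,
\[
\mathop{\sup}\limits_{\xi\in\R} M(\xi)\le C\,\tau^{2\gamma-2},
\]
the maximum being attained either at $\xi=0$, giving $\tau_0^{2\gamma}\tau^{-2}\le\tau^{2\gamma-2}$ since $\gamma\ge 0$ and $\tau\ge\tau_0$, or at the interior critical point $t^\ast=(\gamma\tau^2-\tau_0^2)/(1-\gamma)$, giving $\gamma^\gamma(1-\gamma)^{1-\gamma}(\tau^2-\tau_0^2)^{\gamma-1}$, which is $\lesssim\tau^{2\gamma-2}$ because there $\tau^2-\tau_0^2\ge(1-\gamma)\tau^2$ and $\sup_{u\in(0,1]}u^{-u}=e^{1/e}<\infty$.

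Finally I would choose the exponent to close the bookkeeping. Taking $\alpha=\max\{0,\tfrac{2s}{3}-1\}$ gives $\gamma=\max\{1-\tfrac{2s}{3},0\}\in[0,1]$, and the admissibility $\alpha\le\tfrac16$ is equivalent to $\tfrac{2s}{3}-1\le\tfrac16$, i.e. to $s\le\tfrac74$ --- this is precisely the source of the stated range. Collecting powers, Definition \ref{def:error} contributes $\tau^{-3+3/p}=\tau^{-3/2-3\alpha}$ and the interpolation contributes $(\mathop{\sup}\limits_\xi M)^{3/2}\le C\tau^{3(\gamma-1)}=C\tau^{3\alpha-2s}$, and the two exponents add to $-\tfrac32-2s$, yielding $|F|\le c\,\tau^{-3/2-2s}E^0_{\tau_0}(q)(E^s_{\tau_0}(q))^2$. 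The main obstacle --- and the reason a naive route fails --- is exactly the range $s>1$: there $E^s_\tau$ is \emph{increasing} rather than decreasing in $\tau$, so one cannot first derive the clean scale-$\tau$ estimate $c\,\tau^{-3/2-2s}E^0_\tau(q)(E^s_\tau(q))^2$ (by Sobolev at scale $\tau$, interpolation, and conversion $E^0_\tau\le\tau^{-s}E^s_\tau$) and then invoke monotonicity $E^s_\tau\le E^s_{\tau_0}$, which is false for $s>1$. The frequency-space argument above circumvents this by extracting the entire $\tau$-decay from the multiplier $(\xi^2+\tau^2)^{-1}$ while every Sobolev norm is measured at the fixed scale $\tau_0$.
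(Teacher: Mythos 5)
Your proof is correct, and it reaches the estimate by a genuinely different mechanism from the paper's, even though both make the identical choice of Lebesgue exponent ($p=2$ for $s\le\frac32$, $p=(\frac32-\frac{2s}3)^{-1}$ for $\frac32<s\le\frac74$, i.e.\ your $\alpha=\max\{0,\frac{2s}3-1\}$). The paper discards the multiplier early, via $\Vert f\Vert_{W^{-1,p}_\tau}\lesssim\tau^{-1}\Vert f\Vert_{L^p}$ (see \eqref{eq:interW}), applies the unscaled Sobolev embedding $H^{\frac12-\frac1p}\hookrightarrow L^p$, and then converts scales by a four-case analysis: for $s\in[0,1]$ it uses $E^0_\tau\le\tau^{-s}E^s_\tau$ plus the monotone decrease of $\tau\mapsto E^s_\tau$ (Lemma \ref{lem:qj}); for $1<s\le\frac32$ it first interpolates down to the index $\frac{2s}3\le 1$ where that monotonicity is available; and for $\frac32<s\le\frac74$ it passes through the fixed-scale norm $E^{\frac{2s}3}(q)=E^{\frac{2s}3}_2(q)\le E^{\frac{2s}3}_{\tau_0}(q)$ (now exploiting that $\tau\mapsto E^\sigma_\tau$ is \emph{increasing} for $\sigma\ge1$) before interpolating at $\tau_0$ to produce $E^0_{\tau_0}(E^s_{\tau_0})^2$. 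You instead keep the full weight $(\xi^2+\tau^2)^{-1}$ on the Fourier side and compress all of this bookkeeping --- Sobolev at a moving scale, the two opposite monotonicity statements, and the final interpolation --- into a single three-factor H\"older inequality whose $L^3$ and $L^{3/2}$ factors are exactly $(E^0_{\tau_0}(q))^{2/3}$ and $(E^s_{\tau_0}(q))^{4/3}$, and whose $L^\infty$ factor is the explicit multiplier $M$; the elementary computation $\sup_\xi M\lesssim \tau^{2\gamma-2}$ for $\gamma\in[0,1]$ then carries the entire $\tau$-decay (your critical-point analysis checks out, including the lower bound $\tau^2-\tau_0^2\ge(1-\gamma)\tau^2$ at the interior critical point). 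What each approach buys: the paper's route is shorter given the tools it has already installed, whereas yours is uniform in $s\in[0,\frac74]$, bypasses the monotonicity subtlety you correctly single out (for $s>1$ one indeed cannot estimate at scale $\tau$ and then invoke $E^s_\tau\le E^s_{\tau_0}$), and makes the origin of the threshold transparent, namely $\alpha\le\frac16\iff p\le 3$ in Definition \ref{def:error}.
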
 
\begin{proof} 
The ideas of proof are exactly as in the proof of Lemma \ref{lem:T2n}.
If $ 0\le s \le \frac32$ we take $p=2$ to  bound
\[ |F| \le c \tau^{-\frac32}  (E^0_\tau(q) )^3. \]
If $ s \in [0,1]$, then we are done by virtue of   $ E^0_\tau(q) \le \tau^{-s} E^s_\tau(q)$   and $ E^s_\tau(q) 
\le E^s_{\tau_0}(q) $.
If $ 1<s \le \frac32$ we obtain by interpolation 
\[ 
\begin{split} 
|F|\, &  \le c\tau^{-\frac32-2s}   (E^{\frac23 s}_\tau  (q))^3 
\\ & \le c \tau^{-\frac32-2s} (E^{\frac23 s}_{\tau_0}  (q) )^3 
\\ & \le c\tau^{-\frac32-2s} E^0_{\tau_0} (q) (E^s_{\tau_0} (q))^2. 
\end{split} 
\]

If $ s= \frac74$ we take $p=3$ to bound by Sobolev embedding $H^{\frac16}(\R)\hookrightarrow L^3(\R)$ and interpolation 
\[ |F| \le c \tau^{-5} \Vert( |q|^2-1, q') \Vert_{L^3} ^3 
\le c \tau^{-5} (E^{\frac76}(q))^3
\le c \tau^{-5} (E^{\frac76}_{\tau_0}(q))^3
\leq  c \tau^{-5} E^0_{\tau_0} (q) (E^{\frac74}_{\tau_0}(q))^2. \]    
If $ \frac32 < s < \frac74$ we proceed in the same fashion but with an exponent $p=(\frac32-\frac{2s}{3})^{-1}\in (2,3)$: \begin{align*}
  &  |F|\leq c\tau^{-3+\frac3p}\|(|q|^2-1, q')\|_{W^{-1,p}_\tau}^3\leq c\tau^{-6+\frac3p} (E^{\frac32-\frac1p}(q))^3
  \\
  &\qquad\qquad\leq c\tau^{-6+\frac3p} (E^{\frac32-\frac1p}_{\tau_0}(q))^3\leq
  c\tau^{-\frac32-2s}E^0_{\tau_0}(q)(E^s_{\tau_0}(q))^2.
\end{align*}
\end{proof} 
\begin{rmk}

We observe that $ \frac32+ 2 \frac{7}{4}= 5$, which gives a decay $ \tau^{-5}$ and  corresponds to the Hamiltonian $H_3$.

\end{rmk}
The main result of this section is: 

\begin{prop}[Leading term in $T_2$]\label{prop:quadratic}  

Assume the same assumptions as in Lemma \ref{lem:T2n}. 
Then 
\[ \begin{split} 
T_2(\lambda) =\,&    \int_{x<y} e^{2iz(y-x)} \Big\{   \frac1{4z^2} \big[ \bar q'(x)q'(y)  + (|q(x)|^2-1)(|q(y)|^2-1)\big]  
\\ & +\frac{1}{4z^2 \zeta}  (|q|^2-1)(x)\Im[  r \bar r'](y) 
+\Im[  r\bar r'](x) (|q|^2-1)(y)  
\\& +\frac{i}{2z\zeta  }  \big[ (|q|^2-1)(x) \Im[\bar r(q-r)](y)  +\Im [r(\overline{q-r})](x) (|q|^2-1)(y)\big]\Big\}  dx dy 
\\ &+\frac{1}{4z^2}\int_{\R}   (q\bar q'-r\bar r')    \dx
\\
&
-  \frac{1}{2iz}\int_{\R}|q-r|^2\dx-\frac{i}{4z^2\zeta}\int_{\R} (|q|^2-1)(|r|^2-1) \dx
 + O_\tau( \mathcal{E}^3).
\end{split} 
\] 
\end{prop}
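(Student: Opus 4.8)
The plan is to start from the definition $T_2=\int_{x<y}\exp\!\big(-2iz(x-y)+\int_x^y q_4\,dm\big)\,q_3(x)q_2(y)\,dx\,dy$ and to peel off, one simplification at a time, everything that is not bilinear in the genuinely small quantities $|q|^2-1$, $q'$ and $q-r$, absorbing each discarded piece into $O_\tau(\mathcal{E}^3)$. First I would linearise the exponential weight: writing $\exp(\int_x^y q_4\,dm)=1+(\exp(\int_x^y q_4\,dm)-1)$, the leading correction is the triple integral $\int_{x<m<y}e^{2iz(y-x)}q_3(x)q_4(m)q_2(y)\,dx\,dm\,dy$, which by the trilinear estimate \eqref{eq:lecubic} (with decay rate $a\sim\tau$ supplied by $\Im z=\tau/2$) and the $L^p$ bounds for the $q_j$ in Lemma \ref{lem:qj} is a tolerable cubic error; the higher Taylor terms are even smaller. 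Thus $T_2=\int_{x<y}e^{2iz(y-x)}q_3(x)q_2(y)\,dx\,dy+O_\tau(\mathcal{E}^3)$.

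Next I would exploit the algebraic identity $|r|^2-1+2\Re(\bar r(q-r))=(|q|^2-1)-|q-r|^2$, which is exactly the combination occurring in every numerator of \eqref{eq:q1234}. This replaces the regularised density $|r|^2-1$ by the true density $|q|^2-1$ at the cost of the remainder $|q-r|^2$; since $|q-r|^2$ carries an extra factor $q-r$, every product in which it appears is trilinear in the small quantities, hence $O_\tau(\mathcal{E}^3)$ by the bilinear estimate \eqref{eq:bilinear} together with Lemma \ref{lem:regular}. After this step $q_2$ and $q_3$ are, modulo cubic errors, sums of the three pieces $\frac{r(|q|^2-1)}{|r|^2-\zeta^2}$, $\frac{i\zeta r'}{|r|^2-\zeta^2}$, $-(q-r)$ and their conjugate analogues, which explains the appearance of $|q|^2-1$ rather than $|r|^2-1$ in the statement.

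The third step is the expansion of the product $q_3(x)q_2(y)$ and of the denominators. I would write $\frac1{|r|^2-\zeta^2}=-\frac1{\zeta^2}+\frac{|r|^2-1}{\zeta^2(|r|^2-\zeta^2)}+\frac1{\zeta^2(|r|^2-\zeta^2)}$, where the first term supplies the exact rational-in-$\zeta$ coefficients, the second carries an extra factor $|r|^2-1$ and is therefore a cubic error, and the third is $O(\tau^{-4})$. Here lies the first real difficulty: one cannot simply treat $|r|^2/\zeta^2$ as small, since $\|r\|_{L^\infty}$ may be as large as $\tau\sim|\zeta|$; the point is that the set where $|r|$ is large has small measure (because $|r|^2-1\in L^2$ with norm independent of $\tau$), and this is precisely what the $L^p$-flexibility of Definition \ref{def:error} and Lemma \ref{lem:error}, ranging over $p\in[2,3]$, is built to exploit. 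Having fixed the coefficients I would integrate by parts in $x$ and $y$, using $\partial_y e^{2iz(y-x)}=2iz\,e^{2iz(y-x)}$, $\partial_x e^{2iz(y-x)}=-2iz\,e^{2iz(y-x)}$ and the decay from $\Im z>0$; the boundary contributions on the diagonal $x=y$ produce precisely the single integrals $-\frac1{2iz}\int_{\R}|q-r|^2$, $\frac1{4z^2}\int_{\R}(q\bar q'-r\bar r')$ and $-\frac{i}{4z^2\zeta}\int_{\R}(|q|^2-1)(|r|^2-1)$, while the surviving double integrals are reassembled, via $r'=q'+(r-q)'$ and one further integration by parts of the $(r-q)'$ contributions, into the stated quadratic forms in $q'$, $|q|^2-1$, $\Im(\bar r(q-r))$ and $\Im(r\bar r')$. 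The coefficients $\frac1{4z^2}$, $\frac1{4z^2\zeta}$ and $\frac{i}{2z\zeta}$ then emerge after collecting powers of $\zeta$ through $z^2=\lambda^2-1$, $\lambda-z=\zeta^{-1}$ and $2z=\zeta-\zeta^{-1}$.

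The main obstacle throughout is the bookkeeping of the errors in the low-regularity regime, and in particular the constraint that a derivative must never be allowed to land on an uncontrolled factor. Unlike the classical case $q\in 1+\mathcal{S}$, here neither $\|q'\|_{L^2}$ nor $\||q|^2-1\|_{L^2}$ is controlled — only their $W^{-1,p}_\tau$ norms are — while $\|r\|_{L^\infty}$, $\|r'\|_{L^2}$ and $\||r|^2-1\|_{L^2}$ grow like $\tau$. Every integration by parts must therefore be arranged so that the surviving derivatives sit either on the smooth exponential weight or on the $L^2$-controlled difference $q-r$, and each discarded term has to be matched, by the multilinear estimates of Lemma \ref{lem:multilinear} and the regularisation bounds of Lemma \ref{lem:regular} and Lemma \ref{lem:qj}, against the threshold $\tau^{-3+3/p}\|(|q|^2-1,q')\|_{W^{-1,p}_\tau}^3$ uniformly in $p\in[2,3]$. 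I expect this uniform-in-$p$ matching, rather than any single algebraic identity, to be the crux of the argument.
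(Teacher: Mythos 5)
Your overall plan follows the paper's proof quite closely: Step 1 (removing the $q_4$ factor from the exponential, with the trilinear estimate \eqref{eq:lecubic} and Lemma \ref{lem:qj}), the use of the identity $|r|^2-1+2\Re(\bar r(q-r))=(|q|^2-1)-|q-r|^2$ to pass from $f$ to $|q|^2-1$, and the final integrations by parts producing the single integrals $\frac{1}{4z^2}\int_{\R}(q\bar q'-r\bar r')\dx$, $-\frac{1}{2iz}\int_{\R}|q-r|^2\dx$ and $-\frac{i}{4z^2\zeta}\int_{\R}(|q|^2-1)(|r|^2-1)\dx$ are exactly the paper's Steps 1, 3 and 4.

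There is, however, a genuine gap in your treatment of the denominators. You expand $\frac1{|r|^2-\zeta^2}=-\frac1{\zeta^2}+\frac{|r|^2-1}{\zeta^2(|r|^2-\zeta^2)}+\frac1{\zeta^2(|r|^2-\zeta^2)}$, assert that $-\zeta^{-2}$ ``supplies the exact rational-in-$\zeta$ coefficients'', and discard the last term as $O(\tau^{-4})$. Neither claim is correct. The coefficients in the statement are generated by the \emph{exact} denominator $1-\zeta^2=-2z\zeta$: for instance the coefficient of $\bar r'(x)r'(y)$ is $\frac{\zeta^2}{(1-\zeta^2)^2}=\frac1{4z^2}$, while your expansion yields $\frac1{\zeta^2}$. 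The difference $\frac1{4z^2}-\frac1{\zeta^2}=O(|\zeta|^{-4})$ multiplies a \emph{bilinear} expression in the data (of size up to $c\,\tau\,(E^0_\tau(q))^2$ by \eqref{eq:bilinear} and Lemma \ref{lem:regular}), so the discrepancy is a quadratic term of order $\tau^{-3}(E^0_\tau(q))^2$. This is not a tolerable cubic error: the threshold $\tau^{-3+3/p}\Vert(|q|^2-1,q')\Vert^3_{W^{-1,p}_\tau}$ in Definition \ref{def:error} is cubic in the data, so at fixed $\tau$ and small data no quadratic term can be absorbed by it (and downstream, \eqref{eq:energies} demands an error $\le C(\tau^{-1/2}E^0_\tau(q))^3$, which such a remainder violates). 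The same objection applies to dropping $\frac1{\zeta^2(|r|^2-\zeta^2)}$ outright. The correct move, which is the paper's Step 2, is to write $\frac1{|r|^2-\zeta^2}=\frac1{1-\zeta^2}-\frac{|r|^2-1}{(1-\zeta^2)(|r|^2-\zeta^2)}$: the correction genuinely carries the extra factor $|r|^2-1$ and is therefore cubic (estimated using the lower bound \eqref{eq:lower}, $\Vert r\Vert_{L^\infty}\lesssim\tau$, Lemma \ref{lem:qj} and Lemma \ref{lem:multilinear}), while the unexpanded factor $1-\zeta^2=-2z\zeta$ produces the exact coefficients $\frac{\zeta^2}{(1-\zeta^2)^2}=\frac1{4z^2}$, $\frac{i\zeta}{1-\zeta^2}=-\frac{i}{2z}$, $\frac1{(1-\zeta^2)^2}=\frac1{4z^2\zeta^2}$ appearing in the statement. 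With this single repair, the rest of your outline (argument exchange $r(x)\leftrightarrow r(y)$ via the fundamental theorem of calculus, then integration by parts) goes through as in the paper.
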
 
\begin{proof}

We are going to simplify $T_2$ up to tolerable cubic errors in  four steps.
\begin{paragraph}{\bf Step 1.} We can remove the integral of $q_4$ from the exponential:
\begin{equation} \label{eq:T20} 
T_2 = \int_{x<y}   e^{-2iz(x-y)} q_2(x) q_3(y) \dx \dy+O_\tau (\cE^3). \end{equation}  
Indeed, since by Lemma \ref{lem:qj}
 \[ \left| \int_x^y q_4dm \right| \le |x-y|^{\frac12} \Vert q_4 \Vert_{L^2} \le c \tau^{1/2} |x-y|^{1/2} 
\tau^{-1/2} E^{0}_\tau(q),  \]
we have if $ \tau^{-1/2} E_\tau^0(q) \le 1 $
\[ \left |1 - \exp\Big( \int_x^y q_4 dm \Big)\right| \le    \int_x^y |q_4(m)|dm 
\exp( c\tau^{1/2} |x-y|^{1/2}   ). \] 
Since 
\[ \exp(c\tau^{1/2} |x-y|^{1/2}) \lesssim \exp(  \tau |x-y|/2 ), \] 
we have 
\[ \begin{split} 
\hspace{1cm} & \hspace{-1cm} 
\left| \int_{x<y} e^{- \tau(y-x)}  \Big| 1- \exp\Big( \int_x^y q_4 dm \Big)\Big|  |q_2(x)| |q_3(y)| dx dy \right|
\\ & \lesssim  \int_{x<m<y}  e^{-\tau(y-x)/2}|q_2(x)||q_4(m)||q_3(y)| dx dm dy  .
\end{split} 
\]
The simplication \eqref{eq:T20} follows now from Lemma \ref{lem:qj} and  Lemma \ref{lem:multilinear}. 

 \end{paragraph} 
 
 \begin{paragraph}{\bf Step 2.} 
 We can replace the denominator $ |r|^2-\zeta^2$ by $1-\zeta^2$: 
\begin{equation} \label{eq:denominator}  
\begin{split} 
T_2 = \, & \int_{x<y} e^{2iz(y-x)} \Big[   \frac{ \bar r (|r|^2-1 + 2 \Re (\bar r(q-r)) ) - i \zeta \bar r'}{1-\zeta^2}- (\overline{q-r} )\Big] (x)  
\\ & \qquad \times \Big[ \frac{  r (|r|^2-1 + 2 \Re (\bar r(q-r)) ) + i \zeta r'}{1-\zeta^2} - (q-r)\Big](y)  \dx\dy 
\\ & + O_\tau( \mathcal{E}^3) .
\end{split} 
\end{equation} 
Noticing $\frac{1}{|r|^2-\zeta^2}=\frac{1}{1-\zeta^2}-\frac{|r|^2-1}{(|r|^2-\zeta^2)(1-\zeta^2)}$, and by  \eqref{eq:lower}  $||r|^2-\zeta^2|\gtrsim |\zeta^2|+|r|^2$ and by Lemma \ref{lem:regular}   $\|r\|_{L^\infty}\lesssim\tau$, we apply the bilinear estimate in  Lemma \ref{lem:multilinear} followed by H\"older's inequality with $p_1=p_2=p_3=p\in [2,3]$ 
\[ 
\begin{split} \hspace{1cm} & \hspace{-1cm} 
\Big| \int_{x<y}  e^{2iz(y-x)} q_3(x) \frac{ (|r|^2-1) (r\big(|r|^2-1  + 2\Re[ \bar r(q-r)]\bigr)+ i \zeta r')}{(|r|^2-\zeta^2)(1-\zeta^2)} dx dy\Big|  
\\ & \lesssim  \tau^{-3}  \int_{x<y} e^{-\tau(y-x) }|q_3(x)| \Big[ ||r|^2-1|\cdot  
\bigl|\bigl(|r|^2-1, \tau(q-r), r'\bigr) \bigr|  (y)\Bigr] \dx \dy 
\\ & \le \tau^{-3-2+\frac3{p}  }  
\Vert q_3 \Vert_{L^{p}}   \Vert |r|^2-1 \Vert_{L^{p}}   \Vert\bigl (|r|^2-1, \tau(q-r), r'\bigr) \Vert_{L^{p}}   ,
\end{split} \]  
which is $O_\tau(\mathcal{E}^3)$  by Lemma \ref{lem:regular} and Lemma \ref{lem:qj}.
In the same fashion we replace the   denominator $|r|^2-\zeta^2$  by $1-\zeta^2$ in $q_3$ and  we obtain \eqref{eq:denominator}.  
\end{paragraph} 

\begin{paragraph}{\bf Step 3.} 
We exchange  $ r(x) $ resp. $ \bar r(x) $ and $ r(y) $ resp. $ \bar r(x)$ and replace $|r|^2$ by $1$
with tolerable cubic errors: 
\begin{equation}\label{T2,Step3} \begin{split}  
T_2 
= \, &   \int_{x<y} e^{2iz(y-x)} \Big\{  \frac{\zeta^2}{(1-\zeta^2)^2}  \bar r'(x)  r'(y)  + \frac1{(1-\zeta^2)^2} (|q(x)|^2-1)(|q(y)|^2-1) 
\\ & + \frac{i\zeta} {(1-\zeta^2)^2}\Big[  ( |q|^2-1)(x) \bar r r'(y) 
- r \bar r'(x) (|q|^2-1)(y) \Big] 
\\ & - \frac{1}{1-\zeta^2} \Big[ (\bar r (|q|^2-1)-i\zeta \bar r')(x) (q-r)(y) 
  + (\overline{q-r})(x) (r ( |q|^2-1)+ i \zeta r')(y) \Big]    
   \\ &  + (\overline{q-r})(x) (q-r)(y) \Big\} dx dy
  + O_\tau( \mathcal{E}^3)  .
\end{split} 
\end{equation} 
We first notice that replacing $r(x)$ by $r(y) $ leads to a tolerable cubic error via \eqref{eq:lecubic} 
\[ 
\begin{split}\hspace{1cm} & \hspace{-1cm} 
\left| \int_{x<y}  e^{2iz(y-x) } (r(x)-r(y)) f_1(x) f_2(y) dx dy \right| 
\\ & = \left| \int_{x<m<y} e^{2iz(y-x)} r'(y) f_1(x) f_2(y) dx dm dy \right| 
\\ & \le c\tau^{-3+\frac1{p_1}+\frac1{p_2}+ \frac1{p_3}}    \Vert r' \Vert_{L^{p_1}}       \Vert f_1 \Vert_{L^{p_2}} \Vert f_2 \Vert_{L^{p_3}} 
\end{split} 
\] 
for $ 1\le p_1,p_2,p_3$ with $ \frac1{p_1} +\frac1{p_2} + \frac1{p_3} \ge 1$.
We thus derive from \eqref{eq:denominator} in Step 2 that
\begin{align*}
T_2=\int_{x<y} e^{2iz(y-x)}  \,     &\Big[\frac{1}{(1-\zeta^2)^{2}}  f(x)\big[ |r|^2 f\big](y)+  \frac{\zeta^2}{(1-\zeta^2)^2} \bar r'(x) r'(y)   
\\ &  + \frac{i\zeta}{(1-\zeta^2)^{2}}\Big(  f(x)\big[  \bar r  r'\big](y) 
 - \big[ r\bar  r'\big](x) f(y) \Big) 
\\ & -\frac1{1-\zeta^2}\Big( (\bar rf)(x)  (q-r)(y)  +   ( \overline{q-r})(x)(r f)(y)\Big)  
\\ & +\frac{i\zeta}{1-\zeta^2} \Big( -(\overline{q-r})(x)r'(y) + \bar r'(x) (q-r)(y)  \Big) 
\\ & + (\overline{q-r})(x)(q-r)(y) \Big] dx dy + O_\tau( \mathcal{E}^3) ,
\end{align*}
where we denote
$$
f:=|r|^2-1+ 2\Re[\bar r (q-r)]= |r|^2-1 + r (\overline{q-r})+\bar r (q-r).
$$ 
We can then harmlessly replace $|r|^2$ by $1$ in the first summand, since by \eqref{eq:bilinear} combined with H\"older's inequality 
\[
\begin{split} \hspace{.3cm} & \hspace{-.3cm} 
\tau^{-4} \int_{x<y}e^{-\tau(y-x)} |f(x)| |(|r(y)|^2-1)f(y)| dx dy 
\\ & \le   \tau^{-3+ \frac{1}{p_1} + \frac1{p_2} +\frac{1}{p_3}}   \big( \tau^{-1} \Vert f \Vert_{L^{p_1}}\bigr) \big( \tau^{-1} \Vert f \Vert_{L^{p_2}} \big) \big(\tau^{-1}  \Vert |r|^2-1 \Vert_{L^{p_3}}\big)  .
\end{split} 
\] 
  Noticing that
\begin{align*}
|q|^2-1=|r|^2-1+r(\ov{q-r})+\ov r(q-r)+|q-r|^2 = f  + (q-r)^2,
\end{align*} 
we replace $f $ by $|q|^2-1$ to arrive at  \eqref{T2,Step3}, 
\[ \begin{split} \hspace{1cm} & \hspace{-1cm} \left| \frac{1}{(1-\zeta^2)^2} \int_{x<y} e^{2iz(y-x)} f(x) (q-r)^2(y) dx dy \right| 
\\ & \le \tau ^{-5+\frac1{p_1}+\frac1{p_2} +\frac{1}{p_3}} \big( \tau ^{-1} \Vert f \Vert_{L^{p_1}}\Big)  \Vert q-r \Vert_{L^{p_2}} \Vert q-r \Vert_{L^{p_3}}.
\end{split} 
\]  
We argue similarly with the remaining terms.  
\end{paragraph}

\begin{paragraph}{\bf Step 4.}
We derive the claim of Proposition \ref{prop:quadratic}  by integration by parts.  
We first work with the $\bar r'(x) r'(y)$ term in \eqref{T2,Step3}. 
Recall that $\frac{\zeta^2}{(1-\zeta^2)^2} = (2z)^{-2}$,   and integrate by parts, 
\[
\begin{split} \hspace{1cm} & \hspace{-1cm} 
\frac{1}{4z^2} \int_{x<y}  e^{2iz(y-x)}( \bar q'(x) q'(y)- \bar r'(x) r'(y)) dx dy 
\\  & =\frac{1}{4z^2} \int_{x<y}  e^{2iz(y-x)} \Big[ (\overline{q-r})'(x) r'(y) + \bar r'(x) (q-r)'(y) \\ & \qquad + ( (\overline{q-r})'(x) (q-r)'(y) \Big] dx dy 
\\ & = \int_{x<y} e^{2iz(y-x)} \Big[-\frac1{2iz} \Big(  (\overline{q-r})(x) r'(y)-\bar r'(x) (q-r)(y)  \Big) \\ & \qquad +  (\overline{q-r} )(x)(q-r)(y)\Big]  dx dy 
\\ & \qquad -\frac{1}{4z^2}\int_{\R}   (q\bar q'-r\bar r')  \dx
+ \frac{1}{2iz}\int_{\R} |q-r|^2  \dx.   
\end{split} 
\] 
This gives 
\[ 
\begin{split} 
T_2(z)= \, &  \int_{x<y} e^{2iz(y-x)} \Big\{ \frac1{4z^2} \bar q'(x) q'(y) + \frac{1}{(2z\zeta)^2} (|q(x)|^2-1)(|q(y)|^2-1) 
\\ & \qquad +\frac{i}{4z^2\zeta} \Big[ (|q|^2-1)(x) \bar r r'(y) - r \bar r'(x) (|q|^2-1)(y) \Big] 
\\ & \qquad +\frac{1}{2z\zeta  } \Big[ (|q|^2-1)(x) (\bar r(q-r))(y) + (r(\overline{q-r}))(x) (|q|^2-1)(y)\Bigr]\Big\}  dx dy
\\
&\qquad +\frac{1}{4z^2}\int_{\R}     (q\bar q'-r\bar r')  \dx
- \frac{1}{2iz}\int_{\R} |q-r|^2  dx
  + O_\tau( \mathcal{E}^3)  . 
\end{split} 
\] 
We rewrite the third term as 
\[ 
\begin{split} \hspace{.3cm} & \hspace{-.3cm} 
\frac{i}{8z^2\zeta} \int_{x<y} e^{2iz(y-x)} (|q|^2-1)(x) (\bar r r'-r \bar r')(y) 
+ (\bar r r' -r \bar r')(x) (|q|^2-1)(y) dx dy 
\\ & + \frac{i}{8z^2\zeta} \int_{x<y} e^{2iz(y-x)} (|q|^2-1)(x)  (|r|^2-1)'(y) 
-  (|r|^2-1)'(x) (|q|^2-1)(y) dxdy, 
\end{split} 
\] 
where by integration by parts the second integral reads as 
\[ 
\begin{split}\hspace{2cm} & \hspace{-2cm}  
\frac{1}{4z\zeta} \int_{x<y} e^{2iz(y-x)} \Big[ (|q|^2-1)(x) (|r|^2-1)(y) + (|r|^2-1)(x)(|q|^2-1)(y)\Bigr] \dx\dy
\\ & -\frac{i}{4z^2\zeta}\int_{\R} (|q|^2-1)(|r|^2-1) \dx.
\end{split} 
\]
Hence  using $\frac{1}{4z^2\zeta^2}+2\cdot\frac{1}{4z\zeta}=\frac{1}{4z^2}$,  we derive Proposition \ref{prop:quadratic}.
\end{paragraph}
\end{proof}

\subsection{The term \texorpdfstring{$T_2+\Phi$}{T2+Phi}}
 \label{subsec:T2,Phi}
 Recall the corrected function $\Phi$   given in \eqref{Phi} if $q\in X^0$ with $|q|^2-1, q'\in L^1$:
\begin{align}\label{Phi,q1}
    \Phi&=-\int_{\R} q_1\dx-\frac{i}{2z}\cM-\frac{i}{2z\zeta}\Theta,
\end{align} 
 where $q_1$ is given in \eqref{eq:q1234}:
 \begin{align*}
     q_1\, & =\frac{ i\zeta \big[ |r|^2-1 + 2\Re (\bar r (q-r)) \big]  -  \bar r r'}{|r|^2-\zeta^2},
 \end{align*}$
 \cM=\int_{\R} (|q|^2-1)\dx$ denotes the mass and $\Theta$ denotes the asymptotic change of the phase given in Theorem \ref{thm:E1}.
 
 We correct the term $T_2$ by $\Phi$   as follows.
\begin{prop}[Corrected term $T_2+\Phi$]\label{prop:T2,Phi}
Assume the same assumptions as in Lemma \ref{lem:T2n}. 

The corrected function $\Phi$ has a unique continuous and smooth extension to $X^0$ modulo $\pi i (z\zeta)^{-1}\Z$, and more precisely, with a choice of $\tilde q$ such that $\tilde q\in X^2$, $q-\tilde q\in L^2$ and $|\tilde q|\geq \frac14$, 
  \begin{equation}\label{Phi,tildeq}\begin{split}
     & \Phi=-\frac{i}{2z}\int_{\R}\frac{(|q|^2-1)(|r|^2-1)}{|r|^2-\zeta^2}\dx
      +i\zeta\int_{\R}\frac{|q-r|^2}{|r|^2-\zeta^2}\dx 
      \\
&\quad       -\frac{i}{2z\zeta} \Im \int_{\R} \Bigl( \bar rr'-\frac{\tilde q'}{\tilde q}\Bigr)\dx 
      +\frac{1}{2z\zeta}\int_{\R} \frac{(|r|^2-1)\bar rr'}{|r|^2-\zeta^2}\dx
      \mod \pi i (z\zeta)^{-1}\Z,
  \end{split}\end{equation}
  where the integral $\Im\int_{\R} \Bigl( \bar rr'-\frac{\tilde q'}{\tilde q}\Bigr)\dx $ is understood as 
  $$  \Im\int_{\R}  \Bigl( (\bar r-\bar {\tilde q}) r' -\bar{\tilde q}'(r-\tilde q)+\frac{\tilde q'}{\tilde q}(|\tilde q|^2-1)\Bigr)  \dx.$$
Then
\begin{equation}\label{AB}
     T_2+\Phi=A+B + O_\tau( \mathcal{E}^3), \mod \pi i (z\zeta)^{-1}\Z,
\end{equation}
where  
\begin{equation}\label{A}
\begin{split} 
A(\lambda)=\, &  \frac{i}{4z^2}\int_{\R} \Im   (q\bar q'-r\bar r')  \dx
\\ &  +\frac1{4z^2}  \int_{x<y} e^{2iz(y-x)}    \Big( \bar q'(x)q'(y)  + (|q|^2-1)(x)(|q|^2-1)(y)\Big)  dx dy ,
\end{split} 
  \end{equation}
and  
\begin{equation}\label{B}  \begin{split} \hspace{1cm} & \hspace{-1cm} 
B(\lambda)= -\frac{i}{2z\zeta}  \Im \int_{\R}  ( \bar r r'-\frac{\tilde q'}{\tilde q}) \dx 
-\frac{i}{4z^2\zeta^2}\int_{\R} (|r|^2-1)\Im (\bar rr')\dx
\\ &   +\frac{1}{4z^2\zeta}\int_{x<y} e^{2iz(y-x)} \Big[ (|q|^2-1)(x) \Im [ r\bar r'](y)
\\ & \hspace{3cm} + \Im [r \bar r'](x) (|q|^2-1)(y) \Big] \dx\dy
\\ &  +\frac{i}{2z\zeta  }\int_{x<y} e^{2iz(y-x)}  \Big[ (|q|^2-1)(x) \Im[\bar r(q-r)](y) 
\\ & \hspace{3cm} + \Im[r(\overline{q-r})](x) (|q|^2-1)(y)\Bigr]  \dx \dy.
\end{split} 
\end{equation}
In particular, we have that
\begin{enumerate}
    \item $B(\lambda)+\overline{B(-\bar\lambda)}=0$ and 
\begin{align}\label{A,Re}
    \frac{1}{2}(A(\lambda)+\overline{A(-\bar \lambda)}) 
     =-\frac{i}{2z}\int_{\R} \frac{1}{\xi^2-4z^2} \Bigl(|\widehat{q'}(\xi)|^2
    + |(\widehat{|q|^2-1})(\xi)|^2\Bigr)\dxi.
\end{align}
\item $\frac{1}{2i}\bigl( A(\lambda)-\overline{A(-\bar\lambda)}\bigr)$ reads as
\begin{align}\label{A,Im}
   \frac{1}{4z^2} \int_{\R} \Bigl( -\frac{1}{\xi} \bigl( |\widehat{q'}(\xi)|^2-|\widehat{r'}(\xi)|^2\bigr) +\frac{\xi}{\xi^2-4z^2}  |\widehat{q'}(\xi)|^2 \Bigr) \dx .
\end{align}
In particular with  $r=\tau^2 D_\tau^{-2}q$,   it reads as
\begin{align}\label{A,Im,r}
     \frac{1}{4z^2} \int_{\R}   \frac{\xi(\tau^4+4z^2(2\tau^2+\xi^2))}{(\tau^2+\xi^2)^2(\xi^2-4z^2)}    |\widehat{q'}(\xi)|^2  \dx ,
\end{align}
which can be bounded by $c\tau^{-1}(E^0_\tau(q))^2$.

 \item There exists $\delta_2>0$ and $\tau_0\geq 2$ such that if $q\in X^s$, $s\in (\frac12, \frac32)$ does not have zeros and satisfies the smallness condition
 \begin{equation}\label{KL,small}
 \tau_0^{-1-\varepsilon}E^{\frac12+\varepsilon}_{\tau_0}(q)\leq \delta_2,
 \end{equation}
for some $0<\varepsilon<\min\{s-\frac12,\frac12\}$,
then we can take $r=q=\tilde q$, such that 
 \begin{equation}\label{AB,larges} \begin{split}
     & \Bigl|\frac{1}{2i} \Bigl( (T_2+\Phi)(\lambda)-\overline{(T_2+\Phi)(-\bar\lambda)}\Bigr)-\frac{\lambda}{4z^3}H_1-\frac{\lambda}{4z^3}\int_{x<y } e^{2iz(y-x)}  \Im[ \bar q' (x)   q' (y)] dx dy \Bigr|
     \\
     &\leq c \tau^{-1 -2s} (\tau_0^{-1-\varepsilon}E^{\frac12+\varepsilon}_{\tau_0}(q))(E^s_{\tau_0}(q))^2,\quad \forall\tau\geq \tau_0,
\end{split} \end{equation}
where $H_1$ is defined in Theorem \ref{thm:E1}. 
Correspondingly \eqref{eq:highmomentae} holds.
\end{enumerate} 
\end{prop}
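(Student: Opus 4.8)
The plan is to first derive the closed form \eqref{Phi,tildeq} for $\Phi$ by a purely algebraic rearrangement of \eqref{Phi,q1}, then to add the result to the formula for $T_2$ supplied by Proposition \ref{prop:quadratic} and match the sum term by term against $A+B$; the analytic assertions (1)--(3) are afterwards read off using the symmetry $\lambda\mapsto-\bar\lambda$ together with two elementary Fourier identities. For Step~1 I would substitute $2\Re(\bar r(q-r))=(|q|^2-1)-(|r|^2-1)-|q-r|^2$ into the numerator of $q_1$ from \eqref{eq:q1234}, so that $i\zeta[|r|^2-1+2\Re(\bar r(q-r))]-\bar rr'=i\zeta(|q|^2-1)-i\zeta|q-r|^2-\bar rr'$. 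Combining $-\int q_1\,\dx$ with $-\tfrac{i}{2z}\cM$ and using $2z\zeta=\zeta^2-1$, $\tfrac1{2z}=\tfrac{\zeta}{\zeta^2-1}$ and $\tfrac1{1-\zeta^2}-\tfrac1{|r|^2-\zeta^2}=\tfrac{|r|^2-1}{(1-\zeta^2)(|r|^2-\zeta^2)}$, the $(|q|^2-1)$ part collapses to the first term of \eqref{Phi,tildeq}, the $|q-r|^2$ part to the second, and the $\bar rr'$ part—after writing $\bar rr'=\tfrac12\d_x|r|^2+i\Im(\bar rr')$ so that its real part integrates to a vanishing boundary term, and using $\tfrac{\zeta^2-1}{2z\zeta}=1$—to the last two terms. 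The phase is incorporated through $\Theta=-\Im\int_{\R}\tilde q'/\tilde q\,\dx$, and the indeterminacy of $\Theta$ modulo $2\pi\Z$ is exactly the stated $\pi i(z\zeta)^{-1}\Z$ ambiguity; each term of \eqref{Phi,tildeq} is then manifestly a continuous (analytic) functional on $X^0$, the first pairing $|q|^2-1\in H^{-1}$ against $\tfrac{|r|^2-1}{|r|^2-\zeta^2}\in H^1$ and the phase integral being precisely the object regularised in Theorem \ref{thm:E1}.

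For Step~2 I would add $T_2$ to \eqref{Phi,tildeq} and identify the pieces of \eqref{A}--\eqref{B}. The three double integrals of $\bar q'q'+(|q|^2-1)(|q|^2-1)$, of $(|q|^2-1)\Im[r\bar r']$, and of $(|q|^2-1)\Im[\bar r(q-r)]$ pass directly into $A$ and $B$; the single integral $\tfrac1{4z^2}\int(q\bar q'-r\bar r')$ becomes the first term of $A$ once one notes $\int(q\bar q'-r\bar r')=i\Im\int(q\bar q'-r\bar r')$; and the renormalized phase term of \eqref{Phi,tildeq} is literally the first term of $B$. The remaining single integrals are treated via $\tfrac1{|r|^2-\zeta^2}=\tfrac1{1-\zeta^2}+\tfrac{1-|r|^2}{(1-\zeta^2)(|r|^2-\zeta^2)}$: the $|q-r|^2$ terms combine, through $\tfrac1{2z}+\tfrac{\zeta}{|r|^2-\zeta^2}=\tfrac{\zeta(|r|^2-1)}{(\zeta^2-1)(|r|^2-\zeta^2)}$, into a genuinely cubic remainder; the $(|q|^2-1)(|r|^2-1)$ terms cancel exactly because $\tfrac1{2z(1-\zeta^2)}=-\tfrac1{4z^2\zeta}$; and the leading part of $\tfrac1{2z\zeta}\int\tfrac{(|r|^2-1)\bar rr'}{|r|^2-\zeta^2}$ yields, using $\int(|r|^2-1)\bar rr'=i\int(|r|^2-1)\Im(\bar rr')$, the $-\tfrac{i}{4z^2\zeta^2}\int(|r|^2-1)\Im(\bar rr')$ term of $B$. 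Every remainder has the shape of Definition \ref{def:error}, its size being controlled by Lemma \ref{lem:regular}, Lemma \ref{lem:qj} and the multilinear bounds of Lemma \ref{lem:multilinear}.

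For properties (1) and (2), under $\lambda\mapsto-\bar\lambda$ one has $z\mapsto-\bar z$, $\zeta\mapsto-\bar\zeta$ and $\overline{e^{2i(-\bar z)(y-x)}}=e^{2iz(y-x)}$; checking that every term of $B$ carries a coefficient whose sign is reversed by this operation while its remaining real (or phase-preserving) factor is unchanged gives $B(\lambda)+\overline{B(-\bar\lambda)}=0$. For $A$ I would symmetrise and antisymmetrise the kernels in $x\leftrightarrow y$ and invoke $\int_{\R}e^{2iz|x|}e^{-i\xi x}\dx=\tfrac{-4iz}{\xi^2-4z^2}$ and $\int_{\R}e^{2iz|x|}\sign(x)e^{-i\xi x}\dx=\tfrac{-2i\xi}{\xi^2-4z^2}$ together with Plancherel: the even kernels $\bar q'q'+q'\bar q'$ and $(|q|^2-1)(|q|^2-1)$ produce \eqref{A,Re}, while the odd kernel $\bar q'q'-q'\bar q'$ produces the second piece of \eqref{A,Im} and the first term of $A$ gives $-\tfrac1{4z^2}\int\tfrac1\xi(|\widehat{q'}|^2-|\widehat{r'}|^2)\,\dxi$ (from $\int\Im(q\bar q')=-\int\tfrac1\xi|\widehat{q'}|^2\,\dxi$). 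Substituting $\widehat{r'}=\tfrac{i\xi\tau^2}{\tau^2+\xi^2}\hat q$ and simplifying gives the explicit multiplier of \eqref{A,Im,r}, and a direct bound of that multiplier yields the claimed $c\tau^{-1}(E^0_\tau(q))^2$.

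The main obstacle is property (3). When $q$ does not vanish and $s>\tfrac12$ one may take $r=q=\tilde q$, killing all $q-r$ contributions; using $\bar qq'-q'/q=(q'/q)(|q|^2-1)$ the surviving first term of $B$ equals $\tfrac{i}{2z\zeta}H_1$ by \eqref{H1,nonzero}. Since $B(\lambda)+\overline{B(-\bar\lambda)}=0$, the odd part $\tfrac1{2i}\bigl((T_2+\Phi)(\lambda)-\overline{(T_2+\Phi)(-\bar\lambda)}\bigr)$ equals $-iB$ plus the odd part of $A$ from Step~3, namely $\tfrac{1}{2z\zeta}H_1+\tfrac1{4z^2}\int_{x<y}e^{2iz(y-x)}\Im[\bar q'(x)q'(y)]\,\dx\dy$ together with the two remaining bilinear mass$\times$momentum terms of $-iB$. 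The task is then to expand the spectral coefficients $\tfrac1{2z\zeta}$ and $\tfrac1{4z^2}$ about the common leading value $\tfrac{\lambda}{4z^3}$ (using $\lambda-z=\zeta^{-1}$) and to absorb the resulting differences, along with the bilinear $B$-terms, into the error $c\tau^{-1-2s}(\tau_0^{-1-\varepsilon}E^{1/2+\varepsilon}_{\tau_0})(E^s_{\tau_0})^2$. This estimate is the heart of the matter: it demands the bilinear bounds of Lemma \ref{lem:multilinear} at the critical exponent, the Sobolev embeddings that render the momentum density—and hence $H_1$—meaningful exactly for $s>\tfrac12$, and the smallness \eqref{KL,small}; the odd scaling yields a decay weaker by half a power than the even case of Lemma \ref{lem:error}, leaving little slack, so the classification of each contribution as a main term or a remainder is the delicate point on which the whole argument turns. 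Once \eqref{AB,larges} is in hand, \eqref{eq:highmomentae} follows by adding the correction $\ln(\sum_n T_{2n})-T_2$ estimated in Corollary \ref{coro:T2n}, whose odd part is of strictly smaller order.
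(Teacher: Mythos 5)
Your Step 1 (the algebraic reduction of $\Phi$ to \eqref{Phi,tildeq}), your Step 2 (adding the output of Proposition \ref{prop:quadratic} to \eqref{Phi,tildeq} and matching against $A+B$, with the three exact cancellations you identify), and your treatment of properties (1) and (2) via the reflection $\lambda\mapsto-\bar\lambda$ and Plancherel are correct, and essentially reproduce the paper's own computations; your symmetrized kernel transforms $\int_{\R}e^{2iz|x|}e^{-i\xi x}\dx$ and $\int_{\R}e^{2iz|x|}\sign(x)e^{-i\xi x}\dx$ are equivalent to the paper's one-sided identities $\int_{x<y}e^{2iz(y-x)}\bar f(x)g(y)\dx\dy=\int_{\R}\frac{i}{\xi+2z}\overline{\hat f(\xi)}\hat g(\xi)\dxi$.

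The genuine gap is in property (3). You propose to expand $\frac{1}{2z\zeta}$ and $\frac{1}{4z^2}$ about $\frac{\lambda}{4z^3}$ and to \emph{absorb} the resulting differences, together with the remaining terms of $-iB$, into the error of \eqref{AB,larges}. This cannot succeed, no matter how sharp the bilinear estimates: each of those contributions is a \emph{quadratic} functional of $q$ with a nonvanishing coefficient, for instance $\bigl(\tfrac{1}{2z\zeta}-\tfrac{\lambda}{4z^3}\bigr)H_1=-\bigl(\tfrac{1}{4z^2\zeta^2}+\tfrac{1}{4z^3\zeta}\bigr)H_1$, the single integral $-\tfrac{1}{4z^2\zeta^2}\int_{\R}(|q|^2-1)\Im(\bar qq')\dx$, and the mass$\times$momentum double integral, whereas the right-hand side of \eqref{AB,larges} is \emph{cubic}, carrying the factor $\tau_0^{-1-\varepsilon}E^{\frac12+\varepsilon}_{\tau_0}(q)$. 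Taking $q=(1+a)e^{i\phi}$ with small, compactly supported, real $a,\phi$, so that $H_1=-\int_{\R}(2a+a^2)\phi'\dx\neq 0$ already at quadratic order, and letting the energies tend to zero, shows that a quadratic term can never be dominated by a cubic one; these terms are not remainders to be estimated but must cancel \emph{exactly}. The missing mechanism is the integration by parts of the mass$\times$momentum double integral (the paper's computation leading to \eqref{larges:cubic}): it produces precisely the quadratic single integral $\frac{i}{4z^3\zeta}\int_{\R}(|q|^2-1)\Im[q\bar q']\dx$ and the momentum bilinear term $\frac{i}{4z^3\zeta}\int_{x<y}e^{2iz(y-x)}\Im[\bar q'(x)q'(y)]\dx\dy$, plus genuinely cubic terms, after which the identities $\frac{1}{2z\zeta}+\frac{1}{4z^2\zeta^2}=\frac{1}{4z^2}$ and $\frac{1}{4z^2}+\frac{1}{4z^3\zeta}=\frac{\lambda}{4z^3}$ assemble all quadratic pieces into exactly the two main terms of \eqref{AB,larges}. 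A secondary but related flaw: under the hypothesis \eqref{KL,small} one does not have the smallness of $E^0_{\tau_0}$ required by Definition \ref{def:error} and Corollary \ref{coro:T2n}, so neither the $O_\tau(\mathcal{E}^3)$ calculus nor your final appeal to Corollary \ref{coro:T2n} for \eqref{eq:highmomentae} is available in this regime; the paper instead runs the whole argument with $r=q=\tilde q$ in the $l^p_\tau DU^2$ framework of \cite{KL}, quoting Proposition 5.1 there for \eqref{KL,T2n} and Appendix A there for the cubic remainders \eqref{KL,cubic}.
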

\begin{proof}
We do more detailed calculations for $\Phi, A$ and the case $s>\frac12$ respectively as follows.

\noindent\textbf{Calculation of $\Phi$.}
We first recall the definition of  $q_1$ in \eqref{eq:q1234}
\begin{align*}
    \int_{\R} q_1 dx =\, & \int_{\R} \frac{ i\zeta (|r|^2-1) +i \zeta \big[r (\bar q -\bar r)+\bar r (q-r) \big]  -  \bar r r'}{|r|^2-\zeta^2} \dx\\ 
=&\int_{\R}\Bigl( \frac{i\zeta}{|r|^2-\zeta^2}(|q|^2-1-|q-r|^2)-\frac{\bar rr' }{|r|^2-\zeta^2}\Bigr)\dx.
\end{align*}
Noticing $\frac{1}{|r|^2-\zeta^2}=\frac{1}{1-\zeta^2}-\frac{|r|^2-1}{(1-\zeta^2)(|r|^2-\zeta^2)}
=\frac{1}{1-\zeta^2}-\frac{|r|^2-1}{(1-\zeta^2)^2}+\frac{(|r|^2-1)^2}{(1-\zeta^2)^2(|r|^2-\zeta^2)}$ and $\frac{i\zeta}{1-\zeta^2}=-\frac{i}{2z}$, we derive
\[ \begin{split} \hspace{2cm}&\hspace{-2cm} 
\int_{\R} q_1 dx  
= 
-\frac{i}{2z}\!\int_{\R} (|q|^2-1)-\frac{(|r|^2\!-\!1)}{1-\zeta^2}(|q|^2-1)+\frac{(|r|^2-1)^2}{(1-\zeta^2)(|r|^2-\zeta^2)} (|q|^2-1) \dx 
\\
 & +\frac{i}{2z} \int_{\R}\Bigl[ |q-r|^2-\frac{|r|^2-1}{|r|^2-\zeta^2}|q-r|^2\Bigr]\dx
 \\
 &+\frac{1}{2z\zeta}\int_{\R} \Bigl[ \bar rr'+\frac{|r|^2-1}{2z\zeta}\bar rr'-\frac{(|r|^2-1)^2}{2z\zeta(|r|^2-\zeta^2)}\bar rr'\Bigr]\dx.
\end{split} 
\] 
Thus the correction function $\Phi=-\int_{\R} q_1\dx-\frac{i}{2z}\cM-\frac{i}{2z\zeta}\Theta$ reads as in \eqref{Phi,tildeq}, and  
\begin{align}\label{Phi,E3}
    \Phi& =\frac{i}{4z^2\zeta}\int_{\R} (|r|^2-1)(|q|^2-1)\dx -\frac{i}{2z}\int_{\R}|q-r|^2\dx 
    \\
& 
   \quad 
    -\frac{1}{4z^2\zeta^2}\int_{\R} (|r|^2-1)\bar rr' \dx 
    -\frac{i}{2z\zeta}  \Im \int_{\R}\Bigl( \bar rr'-\frac{\tilde q'}{\tilde q}\Bigr)\dx    +O_\tau(\mathcal{E}^3).\notag
\end{align} 
Hence \eqref{AB}  follows from Proposition \ref{prop:quadratic}.

\smallbreak
\noindent\textbf{Calculation of $A$.}
To calculate more precisely $A$, we  first notice the following fact by use of the definition of Fourier transformations
\[ \begin{split} 
\hspace{1cm} & \hspace{-1cm} 
\int_{x<y}  e^{2iz(y-x)} \bar f(x)   g(y) dx dy = \frac1{2\pi} 
\int_{\R^2} \int_{x<y}  e^{2iz(y-x)- i \xi x + i \eta y}  \overline{\hat f(\xi)} \hat g(\eta)  d\xi d\eta dy dx 
\\ & =\frac1{2\pi} \int_{\R} \frac{1}{-i (\xi+2z) }   \int_{\R^2} e^{-i(\xi-\eta) y }  \overline{\hat f(\xi)} \hat g(\eta) dy  d\eta d\xi  
  =   \int_{\R} \frac{i}{ (\xi+2z)}  \overline{\hat f(\xi)} \hat g(\xi) d\xi .
\end{split} 
\] 
Similarly, we derive 
\[ \begin{split} 
\hspace{1cm} & \hspace{-1cm} 
\int_{x<y}  e^{2iz(y-x)}  f(x)  \bar g(y) dx dy  
  =   \int_{\R} \frac{-i}{ (\xi-2z)} \hat f(\xi) \overline{\hat g(\xi)}  d\xi .
\end{split} 
\] 
Since if $(\lambda, z)\in \cR$, then  $(-\bar\lambda, -\bar z)\in \cR$ with $\Im(-\bar\lambda)=\Im\lambda$ and $\Im(-\bar z)=\Im z$, we have $B(\lambda)+\overline{B(-\bar\lambda)}=0$ and
\begin{align*}
    &\frac12\bigl( A(\lambda)+\overline{A(-\bar \lambda)}\bigr)
    \\
&    =\frac{1}{4z^2}\int_{x<y} e^{2iz(y-x)} \Bigl( \frac12(\bar q'(x)q'(y)+q'(x)\bar q'(y))
    +(|q|^2-1)(x)(|q|^2-1)(y)\Bigr)\dx\dy,
\end{align*}
which reads in terms of their Fourier transforms as
\begin{align*}
    \frac{1}{4z^2}\int_{\R}  \frac12\bigl( \frac{i}{\xi+2z}-\frac{i}{\xi-2z}\bigr) \Bigl( |\widehat{q'}(\xi)|^2+|(\widehat{|q|^2-1})(\xi)|^2\Bigr)\dx\dy.
\end{align*}
This is \eqref{A,Re}.

Similarly  we arrive at \eqref{A,Im} for $\frac{1}{2i}(A(\lambda)-\overline{A(-\bar\lambda)})$  by use of Plancherel's identity,
which is \eqref{A,Im,r} if $r=\tau^2 D_\tau^{-2}q$.
 It can be bounded by $c\tau^{-1}(E^0_\tau(q))^2$ by virtue of $\tau=2\Im z\geq 2$.
 
 \smallbreak 
 
\noindent \textbf{Special case $s\in (\frac12,\frac32)$.}
 If $q\in X^s$, $ s >\frac12$, then we can take $q=r$ (i.e. we do not need the regularisation procedure) in the definitions of $q_j$'s in \eqref{eq:q1234}. 
 This is the setting of \cite{KL}, and we are going to show below similar estimates as in Lemma \ref{lem:T2n} (for terms $T_{2n}$), Lemma \ref{lem:error} (for cubic errors) and \eqref{AB}-\eqref{A,Re}-\eqref{A,Im} (for $A,B$)   in this setting.
 
We proved in \cite[Proposition 5.1]{KL} that  
 \[ \Big|T_{2n}(\lambda)|_{\lambda= i\sqrt{\tau^2/4-1}}\Big| \le  \left( C (1+\tau^{-1}\|q'\|_{l^2_\tau DU^2}) \tau^{-1} \Vert( |q|^2-1, q') \Vert_{l^2_\tau DU^2}  \right)^{2n},    \]
 which can be compared with \eqref{T2n,E0tau} above.
We refer to \cite{KL} for the definition of the space $l^2_\tau DU^2$, and we have   $\frac{1}{\sqrt{\tau}}E^0_\tau(q)\lesssim \frac1\tau \Vert( |q|^2-1, q') \Vert_{l^2_\tau DU^2}  $ and  the crucial property that
 \[ \Vert f \Vert_{l^2_\tau DU^2} \le c_\sigma \tau^{-\frac12-\sigma} \Vert f \Vert_{H^\sigma_\tau},
 \quad\forall \sigma>-\frac12,\]
 that is, 
 $$\tau^{-1} \Vert( |q|^2-1, q') \Vert_{l^2_\tau DU^2} \leq c_\sigma \tau^{-\frac12-\sigma}E^\sigma_\tau(q), \quad \forall \sigma>\frac12.$$
  Then, provided $E^{\frac12+\varepsilon}_{\tau_0}(q) \le \delta_2 {\tau_0}^{1+\varepsilon}$ for some $\varepsilon>0$ such that $\sigma=\frac12+\varepsilon<\min\{s,1\}$ and for some small enough $\delta_2$, and  some $\tau_0\geq 2$,
 \begin{equation}\label{KL,T2n} \Big|  \sum_{n=2}^\infty T_{2n}(\lambda)|_{\lambda= i\sqrt{\tau^2/4-1}} \Big| \le c \tau^{-1-2s}  (\tau_0^{-1-\varepsilon}E^{\frac12+\varepsilon}_{\tau_0}(q))^{2}  ( E^s_{\tau_0}(q))^2,\quad\forall\tau\geq\tau_0,  \end{equation}
 and we continue with a variant of the argument above. 
 
 We call $F$ a cubic error if 
\[  |F | \le c  \tau^{-3}    \Vert( |q|^2-1, q') \Vert_{l^3_\tau DU^2}   ^3.  \] 
Using 
\[ \Vert f \Vert_{l^3_\tau DU^2} \le \Vert f \Vert_{l^2_\tau DU^2} \quad \text{ and } \quad 
\Vert f \Vert_{l^3_\tau DU^2} \le  c \tau^{-\frac23} \Vert f \Vert_{L^3} ,   \] 
we have the following estimate for cubic errors for $s\in (\frac12, \frac32)$:
\begin{equation}\label{KL,cubic} |F| \le c \tau^{-1- 2s} (\tau_0^{-1-\varepsilon}E^{\frac12+\varepsilon}_{\tau_0}(q)) (E^s_{\tau_0}(q))^2. \end{equation} 

If $q=r$, then we have from \eqref{A}-\eqref{B} that
\[
\begin{split} \hspace{1cm} & \hspace{-1cm} 
A+B  = -\frac{i}{2z\zeta}  \Im \int_{\R}  ( \bar q q'-\frac{\tilde q'}{\tilde  q}) \dx  
-\frac{i}{4z^2\zeta^2}\int_{\R} (|q|^2-1)\Im (\bar qq')\dx
\\
&  +\frac1{4z^2}  \int_{x<y} e^{2iz(y-x)}    \Big( \bar q'(x)q'(y)  + (|q|^2-1)(x)(|q|^2-1)(y)\Big)  dx dy 
\\ & + 
\frac{1}{4z^2\zeta} \int_{x<y} e^{2iz(y-x)} \Big[ (|q|^2-1)(x) \Im [ q \bar q'] (y)+ \Im [q \bar q'](x) (|q|^2-1)(y) dx dy.
\end{split} 
\] 
We can integrate by parts to rewrite the last integral above as
\begin{align*}
   & \frac{i}{4z^3\zeta} \int_{\R} (|q|^2-1)\Im[q\bar q']\dx
   \\
&    +\frac{1}{4iz^3\zeta}\int_{x<y }  e^{2iz(y-x)} \Big( \Re[q\bar q'](x) \Im [ q \bar q'] (y)- \Im [q \bar q'](x) \Re[q\bar q'](y) \Bigr) dx dy,
\end{align*}
where the double integral  reads further as
\begin{equation}\label{larges:cubic}\begin{split}
    &-\frac{1}{4iz^3\zeta}\int_{x<y } e^{2iz(y-x)}  \Im[( q\bar q')(x) (\bar q q')(y)] dx dy
    \\
&    = \frac{i}{4z^3\zeta}\int_{x<y } e^{2iz(y-x)}  \Im[ \bar q' (x)   q' (y)] dx dy
\\
&\quad - 
\frac{i}{4z^3\zeta}\int_{x<y } e^{2iz(y-x)}  \Im\Bigl[ \bar q' (x) \int_{x}^y q'(m)\dm  (\bar qq') (y)\Bigr] dx dy
\\
&\quad+\frac{i}{4z^3\zeta}\int_{x<y } e^{2iz(y-x)}  \Im[ \bar q' (x)    ( (|q|^2-1)q') (y)] dx dy.
\end{split}\end{equation} 
We have showed in  \cite[Appendix A]{KL} that \footnote{Instead of $\Phi$ given by \eqref{Phi,q1} if $q\in X^0$ with $|q|^2-1, q'\in L^1$ here, the correction function $\underline{\Phi}$ in \cite{KL} was given by
$ \underline{\Phi}=-\int_{\R} q_1\dx -\frac{i}{2z}\cM-\frac{i}{2z\zeta}\cP, $
if $q\in X^{(\frac12)_+}$ with $|q|^2-1, q'\in L^1$.
Thus with a choice of $\tilde q$,   $T_2+\Phi=T_2+\underline{\Phi}-\frac{i}{2z\zeta}\Im \int_{\R}  ( \bar q q'-\frac{\tilde q'}{\tilde  q}) \dx$. }  
\begin{align*}
  &  (T_2 +\Phi) -( A+B) , 
  \\
  &\hbox{ with }A+B=-\frac{i}{2z\zeta}  \Im \int_{\R}  ( \bar q q'-\frac{\tilde q'}{\tilde  q}) \dx  
-\Bigl(\frac{i}{4z^2\zeta^2}+\frac{i}{4z^3\zeta}\Bigr)\int_{\R} (|q|^2-1)\Im (\bar qq')\dx 
\\
& \qquad  +\frac1{4z^2}  \int_{x<y} e^{2iz(y-x)}    \Big( \bar q'(x)q'(y)  + (|q|^2-1)(x)(|q|^2-1)(y)\Big)  dx dy 
\\
&\qquad +\frac{i}{4z^3\zeta}\int_{x<y } e^{2iz(y-x)}  \Im[ \bar q' (x)   q' (y)] dx dy
\end{align*}  
is a cubic error, which can be estimated as in \eqref{KL,cubic} under the smallness assumption \eqref{KL,small}.  

Finally if  $q\neq 0$ and we take $\tilde q=q$, then by Theorem \ref{thm:E1} we have
$$
H_1=\int_{\R} (|q|^2-1)\Im[\frac{\bar q'}{\bar q}]\dx \mod 2\pi\Z,
$$
and the integral difference
\begin{align*}
    &\int_{\R}(|q|^2-1)\Im[q\bar q']\dx-H_1 
 =\int_{\R}(|q|^2-1)\frac{|q|^2-1}{|q|}\Im[\frac{q\bar q'}{|q|}]\dx,
\end{align*}
can be controlled as in \eqref{KL,cubic}.  
Thus we have \eqref{AB,larges} by view of $\frac{1}{2z\zeta}+\frac{1}{4z^2\zeta^2}=\frac{1}{4z^2}$ and $\frac{1}{4z^2}+\frac{1}{4z^3\zeta}=\frac{\lambda}{4z^3}$.

Correspondingly \eqref{eq:highmomentae} follows by virtue of \eqref{KL,T2n}. 

\end{proof}

\subsection{Conclusions}\label{subsec:conclusion}
Finally we deduce the claims made in Theorem \ref{thm:energies} about the holomorophy of the renormalized transmission coefficient. 
\subsubsection{Case $(\lambda,z)\in \cR$ with $\Im \lambda\geq 0$, $2\Im z=\tau> 2$ and $q\in X^0$ with $E^0_\tau(q)\leq \delta_1\tau^{\frac12}$}\label{subss:main}
This is the case considered from Subsection \ref{subsec:renormalized} to Subsection \ref{subsec:T2,Phi}. 
Let $r$ be given in Lemma \ref{lem:regular}.
Let $\Phi$ be given in Proposition  \ref{prop:T2,Phi}.

By the obtained results in Subsections \ref{subsec:renormalized}-\ref{subsec:T2,Phi}, we can define for $q\in X^0$ under the smallness condition $E^0_\tau(q)\leq \delta_1\tau^{\frac12}$,
$$
\ln\Tc^{-1}(\lambda)=(T_2+\Phi)+\Bigl(\ln\sum_{n=0}^\infty T_{2n}-T_2\Bigr)\in \C/(\pi i (z\zeta)^{-1}\Z),
$$
where $T_2+\Phi$ is characterized in Proposition \ref{prop:T2,Phi}, and the estimates for $\ln\sum_{n=0}^\infty T_{2n}-T_2$ can be found in Corollary \ref{coro:T2n}.
Hence 
\eqref{eq:energies} and \eqref{eq:lowmomentae} in Theorem \ref{thm:energies} follow from  Corollary \ref{coro:T2n},  Lemma \ref{lem:error} and Proposition \ref{prop:T2,Phi}.

Correspondingly the renormalized transmission coefficient $\Tc^{-1}$ reads as 
\begin{equation}\label{Tc}
\Tc^{-1}(\lambda) =e^{  \Phi(\lambda)} \sum_{n=0}^\infty  T_{2n}(\lambda) \in \C /(e^{\pi i(z\zeta)^{-1}\Z}),
\end{equation} 
where $\Phi$ is defined in Proposition  \ref{prop:T2,Phi}, and $T_{2n}$ are defined in \eqref{T2n}.

If $q\in 1+\cS$, then the (original) transmission coefficient $T^{-1}$   reads as $T^{-1}(\lambda)=e^{-\int_{\R}q_1\dx}\sum_{n=0}^\infty T_{2n}$  (see \eqref{w1,T2n}),
and we have the relation 
\begin{equation*}
    \Tc^{-1}(\lambda)= T^{-1}(\lambda)e^{-\frac{i}{2z}\cM-\frac{i}{2z\zeta}\Theta}.
\end{equation*}
We recall that $\Theta$ is uniquely defined modulo $2\pi\Z$.
This implies that the definition of the renormalized transmission coefficient is independent of the choice of $r$.
We hence can typically choose $r=\tau^2 D_\tau^{-2}q$. 

By the proof of Theorem \ref{thm:E1} and more precisely Lemma \ref{lem:tildeq}, we can choose $\tilde q $ continuously in small balls and hence we find a continuous branch of $\Phi(\lambda; q)$ and hence  $T_c^{-1}(\lambda; q)$ on the covering space of $X^0\ni q$. For $ q= 1$ we may choose $\tilde q=1$ so that $T_c^{-1}(\lambda; 1) = 1 $ (we identify $1$ with  $\tilde 1$  in the covering space).

\subsubsection{Case $(\lambda,z)\in \cR$ with $\Im \lambda\geq 0$, $2\Im z=\tau> 2$ and $q\in X^0$}\label{subss:nosmall}
We are now in the position to remove the smallness condition on the energy.

Given any small enough $\delta_0$,   there exists $R$ so that 
\[ \Vert |r|^2-1 \Vert_{L^2(\R \backslash (-R,R))}
+\tau \Vert r-q \Vert_{L^2(\R \backslash (-R,R))}
+ \Vert  r'  \Vert_{L^2(\R \backslash (-R,R))} < \delta_0 \tau .
 \] 
As the first step, following exactly the ideas in Subsection \ref{subsec:renormalized} and Subsection \ref{subsec:T2n}, we can  solve the $w$-ODE \eqref{ReLax}   on $(-\infty, -R]$. 
Then we solve the original spectral ODE \eqref{Lax} for $u$  on the finite interval  $ [-R, R]$ with initial data at $x=-R$ given by $w(-R)$ under the transformation \eqref{u-w} obtained in the first step. 
On $[R,\infty)$ we solve the $w$-ODE  again
by iteration with initial data at $x=R$ given by $u(R)$ and the transformation \eqref{u-w}. 
The constant $\delta_0$ exists such that these arguments work and we can still define the renormalized transmission coefficient in terms of the limit of the first component of the solution $w$ as
$$
\Tc^{-1}(\lambda)=e^\Phi\lim_{x\rightarrow\infty} w^1(x).
$$

To prove holomorphy of $\Tc^{-1}(\lambda; q)$ in $\lambda$, for any $z_0$ with $\Im z_0>1$, we take a small open disk around $z_0$ and fix $\tau=2\Im z_0$ (instead of $\tau=2\Im z$) in this small disk, to define $r=\tau^2D_\tau^{-2}q$.
Holomorphy of $ T_c^{-1}(\lambda)$ is then a consequence of holomorphy of the $q_j$ with respect to $\lambda$ and of  the Picard iteration-mapping \eqref{eq:integral}. 
Similarly the analyticity of $\Tc^{-1}(\lambda; q)$ in $q$ is obvious from the construction. 

\subsubsection{Case $(\lambda,z)\in \cR$ with $\Im \lambda< 0$, $\tau=2\Im z> 2$ and $q\in X^0$}\label{subss:negative}
In the case $(\lambda, z)\in \cR$ with $\Im\lambda<0$ (we recall that always $ \Im z >0$), we have $\zeta^{-1}=(\lambda+z)^{-1})= \lambda-z$ satisfying $-\Im\zeta^{-1}=-\Im\lambda+\Im z\in (\Im z, 2\, \Im z)\in \R^+$.
We define 
$$
v^{-}= \left( \begin{matrix} -i\zeta^{-1} & r \\ \bar r & i\zeta^{-1} \end{matrix} \right)u,
\hbox{ or equivalently,  }u=\frac1{|r|^2-\zeta^{-2}} 
\left( \begin{matrix}-i\zeta^{-1} & r \\ \bar r & i\zeta^{-1} \end{matrix} \right)v^{-}, $$
such that $v$ solves 
\[ 
\begin{split} 
(v^{-})_x  
= &   \left( \begin{matrix} iz &0 \\ 0 & -iz \end{matrix} \right) v^{-}
+ 
\left( \begin{matrix} q_4^{-}-q_1^{-}    
&q_2^{-}   
 \\ q_3^{-}
  &-q_1^{-}  \end{matrix} \right)v^{-},
\end{split}
\] 
where $q_k$, $k=1,2,3,4$ are given  by  
\begin{equation}\label{eq:q1234-}\begin{split}
&q_1^{-}=\frac{- i\zeta^{-1} \bigl[|r|^2-1+2\Re(\bar r (q-r)) \big]  -    r \bar r'}{|r|^2-\zeta^{-2}},
\\
&q_2^{-}=\frac{  r\bigl[ |r|^2-1+2\Re(\bar r (q-r)) \big] + i \zeta^{-1} r'    }{|r|^2-\zeta^{-2}}
-(q-r),
\\
&q_3^{-}=\frac{  \bar r \bigl[ |r|^2-1 +2\Re(\bar r (q-r)) \big] - i \zeta^{-1} \bar r'    }{|r|^2-\zeta^{-2}}-(\overline{q-r}),
\\
&q_4^{-}=\frac{-2i\zeta^{-1}\bigl[ |r|^2-1+2\Re(\bar r (q-r)) \big]    - 2i\Im ( r \bar r' ) }{|r|^2-\zeta^{-2}}.
\end{split}\end{equation}

There are similar estimates for $ q_j^-$ as in Lemma \ref{lem:qj} if $\Im \lambda < 0$, and all the results in Subsection \ref{subsec:renormalized}-Subection \ref{subsec:T2,Phi} hold correspondingly. 
The arguments above in Paragraph \ref{subss:nosmall}   work also correspondingly.
 
\subsubsection{Case $(\lambda,z)\in \cR$ with  $2\Im z=\tau\leq 4$  and $q\in X^0$}\label{subss:smalltau}
We consider now the case when $\tau=2\Im z\in (0,4]$ is bounded.
For $\delta_0>0$ sufficient small (to be determined later), we can take $\tau_0\geq 2$ such that $E_{\tau_0}^0(q)\leq\delta_0$.
We  take $r=\tau_0^2 D_{\tau_0}^{-2}q$, such that by Lemma \ref{lem:regular},
\begin{align*}
    &\|r\|_{L^\infty}\leq c\tau_0,\hbox{ and } 
    \\
    &\|(\frac{1}{\tau_0}r', \frac{1}{\tau_0}(|r|^2-1), q-r )\|_{L^p}\leq C_p\|(|q|^2-1, q')\|_{W^{-1,p}_{\tau_0}},\quad \forall p\in [2,\infty).
\end{align*}

If $(\lambda, z)\in \cR$ with $\Im\lambda\geq 0$, then $\zeta\in \cU$ with $\Im\zeta>0$ and $|\zeta|\geq 1$, and hence $\zeta^2\in \C\backslash {\R^+}$ with $|\zeta^2|\geq 1$.
Thus similarly as in Lemma \ref{lem:qj} we have also
\begin{align*}
\|q_j\|_{L^p}\leq C_p(\zeta) \|(|q|^2-1, q')\|_{W^{-1,p}_{\tau_0}},\quad \forall p\in [2,\infty).
\end{align*}
We then have by the argument in Subsection \ref{subsec:T2n}  that
\begin{align*}
    |T_{2n}(\lambda)|\leq (C_\lambda)^n (E^0_{\tau_0}(q))^{2n}
    \leq (C_\lambda {\delta_0}^2)^n,\quad \forall n\geq 1,
\end{align*}
where $C_\lambda$ is some positive constant depending on $\lambda, z, \zeta$.
We then take $\delta_0>0$ sufficiently small (depending on $\lambda$) such that $\sum_{n=0}^\infty T_{2n}$ converges, and we define the renormalized transmission coefficient $\Tc^{-1}$ as before (see e.g. \eqref{Tc}).

Together with Paragraph \ref{subss:nosmall} and Paragraph \ref{subss:negative}, we have the holomorphy of  the renormalized transmission coefficient   across the real interval $\lambda \in (-1,1)$.

\subsubsection{The renormalized transmission coefficient and the Lax operator}\label{subss:Tc}
We conclude from Paragraphs \ref{subss:main}-\ref{subss:nosmall}-\ref{subss:negative}-\ref{subss:smalltau} above that   the renormalized transmission coefficient $T_c^{-1}(\lambda; q)$ is well-defined for $\lambda\in \C \backslash (( -\infty, -1] \cup [1,\infty) )$ and $q\in X^0$.
It is understood as a continuous map from the universal covering space of $X^0$ to the holomorphic functions on $ \C \backslash (( -\infty, -1] \cup [1,\infty) )$. 
By construction it is independent of time.

We are now in a position to study the spectrum of the Lax operator \eqref{LaxOp}, if $q\in X^0$. By use of Lemma \ref{lem:tildeq}, we can take its regularisation $ \tilde q$ which satisfies $ \tilde q - q \in L^2 $, $ |\tilde q|=1 $
and $ \tilde q' \in H^1 $.   Then the spectrum of $L$ is the same as the spectrum of conjugated  Lax operator:   
\[ \begin{split}  \left( \begin{matrix} \overline{\sqrt{\tilde q}} & 0 \\ 0 & \sqrt{\tilde q} \end{matrix} \right) 
\left(\begin{matrix} i \partial_x & - i q \\ i \bar q & -i \partial_x \end{matrix}  \right)  \left( \begin{matrix}  \sqrt{\tilde q}  & 0 \\ 0 & \overline{\sqrt{\tilde q}} \end{matrix} \right) 
\, & =\left( \begin{matrix} i \partial_x + \frac{i}2 \overline{\tilde q} \tilde q'   & -i \overline{ \tilde q} q   
\\ i \tilde q \bar q & -i \partial_x -\frac{i}2 \tilde q \overline{\tilde q}'   \end{matrix} 
\right) 
\\ & = \left( \begin{matrix} i\partial_x & -i \\  i & -i \partial_x \end{matrix} \right) 
+ \left( \begin{matrix}  \frac{i}2 \overline{\tilde q} \tilde q' & -i (\overline{ \tilde q} q -1) \\  i (\tilde q \bar q-1)  &  -\frac{i}2 \tilde q \overline{\tilde q}'     \end{matrix} \right) 
\end{split} 
\]   
where the entries of the second matric are in $L^2$. We obtain a compact perturbation of the Lax operator as an operator from $H^{1/2} \to H^{-1/2}$. By a Fourier transform the left operator becomes the multiplication operator by 
\[ \left( \begin{matrix} -\xi & -i \\ i & \xi \end{matrix} \right),\quad \xi\in\R, \]
whose eigenvalues are $\pm \sqrt{\xi^2+1} $, and  hence the spectrum is the continuous spectrum, which is $ (-\infty,-1] \cup[1,+\infty)$. When we consider the Lax operator as an operator from $H^{1/2}$ to $H^{-1/2}$ then the multiplication by the second operator (whose entries are in $L^2$) is a compact perturbation. Hence the essential spectrum of the Lax operator is $(-\infty, -1] \cup [1,\infty)$ and the spectrum outside the essential spectrum consists of isolated eigenvalues (in $(-1,1)$ since
the Lax operator is self adjoint). If $\lambda $ is outside the continuous spectrum then the space of solutions in $L^2((-\infty,0] )$ of 
\[ L u - \lambda u = 0 \] 
is spanned by the left Jost function and thus the geometric multiplicity of eigenvalues in $(-1,1)$ is $1$. The algebraic multiplicity equals the geometric multiplicity since the operator is selfadjoint. Any eigenfunction (with eigenvalue outside the essential spectrum) is a multiple of the left and the right Jost function and hence the transmission coefficient $T_c^{-1}$ vanishes at eigenvalues.  The algebraic multiplicity is the order of the zero. Since the algebraic multiplicity is $1$ the zeroes are simple.  
Thus the renormalized transmission coefficient has simple zeros in $(-1,1)$.

\section*{Acknowledgments}
Herbert Koch was partially supported by the Deutsche Forschungsgemeinschaft (DFG, German Research Foundation)  - EXC-2047/1 - 390685813 -  Hausdorff Center for Mathematics and Project ID 211504053 - SFB 1060.
Xian Liao is funded by the Deutsche Forschungsgemeinschaft (DFG, German Research 
Foundation) – Project-ID 258734477 – SFB 1173.

\printbibliography 
\end{document}